\numberwithin{equation}{section}
\def\qed{\hfill$\Box$\vspace{12pt}}
\newtheorem{theorem}{Theorem}[section]
\newtheorem{lemma}[theorem]{Lemma}
\newtheorem{proposition}[theorem]{Proposition}
\newtheorem{definition}[theorem]{Definition}
\newtheorem{hypothesis}[theorem]{Hypothesis}
\newtheorem{remark}[theorem]{Remark}
\title{Existence  of weak solutions to
 stochastic heat equations driven 
by truncated  $\alpha$-stable white noises with
non-Lipschitz  coefficients}
\author{Yongjin Wang$^{1,2}$, Chengxin Yan$^{1,}$\thanks{Corresponding author.}~~and~Xiaowen Zhou$^3$\\
{\small $^1$School of Mathematical Sciences, Nankai University, Tianjin {\rm 300071}, China}\\
{\small $^2$School of Business, Nankai University, Tianjin {\rm 300071}, China}
\\
{\small $^3$Department of Mathematics and Statistics, Concordia University, Montreal {\rm H3G 1M8}, Canada}
\\
{Email: {yjwang@nankai.edu.cn};
~{cxyan@mail.nankai.edu.cn}};
\\
~{xiaowen.zhou@concordia.ca}}
\date{}
\begin{document}

\maketitle

\begin{abstract}
We consider a class of stochastic heat 
equations driven by truncated $\alpha$-stable  
white noises for $\alpha\in(1,2)$ with noise coefficients that are 
continuous but not  necessarily Lipschitz continuous.
We prove the existence of weak solution in probabilistic sense, taking values in two different forms under different conditions,  to such an equation using a weak 
convergence argument on solutions to the approximating stochastic heat equations. 
More precisely,
for $\alpha\in(1,2)$  there exists  a measure-valued  weak solution.
However, 
for $\alpha\in(1,5/3)$ there exists
a function-valued weak solution,
 and in this case
we further show that for $p\in(\alpha,5/3)$
the uniform $p$-th moment in $L^p$-norm of the weak solution is finite,
and that the weak solution is uniformly stochastic
continuous in $L^p$ sense. 
\bigskip

\noindent\textbf{Keywords:} 
Non-Lipschitz noise coefficients;
Stochastic heat equations;
Truncated $\alpha$-stable white noises;
Uniform $p$-th moment;
Uniform stochastic continuity.

\bigskip

\noindent{{\bf MSC Classification (2020):} Primary: 60H15;
Secondary: 60F05, 60G17}
\end{abstract}

\section{Introduction}
\label{sec1}
In this paper we study the existence of weak solution in  probabilistic sense to the
following non-linear stochastic heat equation
\begin{equation}
\label{eq:originalequation1}
\left\{\begin{array}{lcl}
\dfrac{\partial u(t,x)}{\partial t}=\dfrac{1}{2}
\dfrac{\partial^2u(t,x)}{\partial x^2}+
\varphi(u(t-,x))\dot{L}_{\alpha}(t,x),
&& (t,x)\in (0,\infty)
\times(0,L),\\[0.3cm]
u(0,x)=u_0(x),&&x\in[0,L],\\[0.3cm]
u(t,0)=u(t,L)=0,&& t\in[0,\infty),
\end{array}\right.
\end{equation}
where $L$ is an arbitrary positive constants,
$\dot{L}_{\alpha}$ 
denotes a truncated $\alpha$-stable space-time white 
noise on $[0,\infty)\times[0,L]$ with $\alpha\in(1,2)$, 
the noise coefficient
$\varphi:\mathbb{R}\rightarrow \mathbb{R}$ 
satisfies the hypothesis given below
and the initial function 
$u_0$ is random and measurable.

Before studying the equation 
of  particular form (\ref{eq:originalequation1}), we first consider 
an SPDE
\begin{equation}
\label{GeneralSPDE}
\dfrac{\partial u(t,x)}{\partial t}=\dfrac{1}{2}\dfrac{\partial^2u(t,x)}{\partial
x^2}+G(u(t,x))+H(u(t,x))\dot{F}(t,x),
\quad t\geq0, x\in\mathbb{R},
\end{equation}
in which $G:\mathbb{R}\rightarrow \mathbb{R}$
is Lipschitz continuous, $H:\mathbb{R}\rightarrow 
\mathbb{R}$
is continuous and $\dot{F}$ is a 
space-time white noise.

When $\dot{F}$ is  a Gaussian 
white noise, there is a growing literature on SPDEs related to (\ref{GeneralSPDE})
such as the stochastic 
heat (parabolic) equations (see, e.g., Walsh \cite{Walsh:1986} and Dalang et al. \cite{Dalang:2009}),
stochastic Burgers type equations (see, e.g., Bertini and Cancrini \cite{Bertini:1994}, Da Prato et al. \cite{Daprato:1994}), 
SPDEs with reflection
(see, e.g., Zhang \cite{Zhang:2016}),
Parabolic Anderson Model (see, e.g., G\"{a}rtner and Molchanov \cite{Gartner:1990}), 
dissipative stochastic systems or
reaction-diffusion stochastic equations
(see, e.g.,
Cerrai \cite{Cerrai:2003} and
Bignamini \cite{Bignamini:2023} with Wiener noise, Marinelli and R\"{o}ckner \cite{Marinelli:2010} and 
Debussche et al. \cite{Debussche:2013} with L\'{e}vy noise),
SPDEs in Hilbert space (see, e.g., 
Da Prato and Zabczyk \cite{Prato:2014},
Liu and 
R{\"o}ckner \cite{Liu:2010,Liu:2013,Liu:2015}
and references therein), etc.
In particular, such a SPDE arises from super-processes (see, e.g.,  Konno and Shiga \cite{Konno:1988}, Dawson \cite{Dawson:1993}, Perkins \cite{P1991} 
and Ren et al. \cite{Ren:2017,Ren:2018}
and references therein). For $G\equiv 0$ and
$H(u)=\sqrt{u}$,  the solution to (\ref{GeneralSPDE}) is the density field of a one-dimensional super-Brownian motion. 
For $H(u)=\sqrt{u(1-u)}$ (stepping-stone model in population genetics), 
Bo and Wang \cite{Bo:2011}
considered a stochastic interacting model
consisting of equations (\ref{GeneralSPDE})
and proved the existence of weak solution 
in probabilistic sense
to the system by using a weak convergence argument.

In the case that
$\dot{F}$ 
is a Gaussian colored  noise that is white in time and colored in space, for continuous function 
$H$ satisfying the linear growth condition,
Sturm \cite{Sturm:2003} proved the existence of a pair $(u,F)$ satisfying
(\ref{GeneralSPDE}), the so-called weak 
solution in probabilistic sense, by first establishing the existence and uniqueness of lattice systems of SDEs driven by correlated Brownian motions with non-Lipschitz diffusion coefficients that describe branching particle systems in random environment in which the motion process has a discrete Laplacian generator and the branching mechanism is affected by a colored Gaussian random field,
and then applying an approximation procedure. 
Xiong and Yang \cite{Xiong:2023} 
proved the existence of weak solution 
$(u,F)$ to (\ref{GeneralSPDE}) in a finite spatial domain with different boundary conditions
by considering the weak 
limit of a sequence of approximating SPDEs of (\ref{GeneralSPDE}). They further
proved the existence and  uniqueness of the strong solution under additional H\"{o}lder continuity assumption on $H$.

If $\dot{F}$ is a  L\'{e}vy noise 
with Lipschitz continuous coefficient $H$,
there is also a growing literature on existence
and uniqueness of solutions to SPDEs related
to (\ref{GeneralSPDE}).
See, e.g., Albeverio et al. \cite{Albeverio:1998}, Hausenblas \cite{Hausenblas:2005}  and 
R\"{o}ckner and Zhang \cite{Rockner:2007}
for Poisson white noise;
Applebaum and Wu \cite{Applebaum:2000} for a L\'{e}vy space-time 
white noise that extended
the results in \cite{Albeverio:1998};
Peszat and Zabczyk \cite{Peszat:2007}
for infinite dimensional L\'{e}vy 
processes;
Truman and Wu \cite{Truman:2003},
Dong and Xu \cite{Dong:2007},
Wu and Xie \cite{Wu:2012} and Hausenblas and Giri \cite{Hausenblas:2013} for stochastic Burgers type equations;
Bo and Wang \cite{Bo:2006}
for Stochastic {C}ahn-{H}illiard partial differential equations;
Brze\'{z}niak et al. \cite{ Brzezniak:2013},
Dong et al. \cite{Dong:2011} and Zhu et al \cite{Zhu:2019} for
stochastic Navier-Stokes equations; 
Dunst et al. \cite{Dunst:2012}
for numerical approximations.
Brze{\'z}niak et al. \cite{Brzezniak:2014}
studied general SPDEs driven by 
L\'{e}vy processes in Hilbert space (excluding the L\'{e}vy space-time white noise) with 
locally monotone coefficients, and extended
the results in \cite{Liu:2010}.
Brze{\'z}niak et al. \cite{Brzezniak:2018}
studied the existence of weak solution to stochastic reaction-diffusion equations driven by real-valued and general 
Banach-space-valued L\'{e}vy processes.

In particular, when $\dot{F}$ is 
an $\alpha$-stable white noise for $\alpha\in (0,1)\cup(1,2)$,
Balan \cite{Balan:2014} studied SPDE
(\ref{GeneralSPDE}) with $G\equiv 0$ and Lipschitz coefficient $H$ on a bounded domain in $\mathbb{R}^d$ with
zero initial condition and Dirichlet
boundary, and proved the  existence 
of strong random field solution. 
The approach in \cite{Balan:2014}  is 
to first solve the equation with truncated noise
(by removing the big jumps, the jumps size exceeds 
a fixed value $K$, from $\dot{F}$), 
yielding a solution $u_K$,
and then show that for $N\geq K$
the solutions $u_N=u_K$ 
on the event $t\leq\tau_K$, where 
$\{\tau_K\}_{K\geq1}$ is a sequence of stopping
times which tends to infinity
as $K$ tends to infinity. 
Such a  localization method
which is also applied in 
Peszat and Zabczyk 
\cite{Peszat:2006} to show the existence of weak Hilbert-space-valued solution. 
For the stochastic heat equations driven by additive $l^2$-valued $\alpha$-stable processes, we refer to Priola and Zabczyk \cite{Priola:2011} and references therein.

For $\alpha\in(1,2)$, Wang et al. \cite{Wang:2023}  studied the existence and pathwise uniqueness of strong function-valued solution of (\ref{GeneralSPDE}) with Lipschitz coefficient $H$ using a localization method, and  showed a comparison principle of solutions to such equation with different initial functions and drift coefficients.
Yang and Zhou \cite{Yang:2017} found sufficient conditions on
pathwise uniqueness of solutions to a class of SPDEs (\ref{GeneralSPDE}) driven by $\alpha$-stable white noise without negative jumps and with non-decreasing H\"{o}lder continuous noise coefficient $H$. But the existence of weak solution to (\ref{GeneralSPDE}) with general non-decreasing H\"{o}lder continuous noise coefficient is left open.
For stochastic heat equations driven by general heavy-tailed
noises with Lipschitz noise coefficients, we refer to Chong \cite{Chong:2017} and references therein.

When $G=0$, $H(u)=u^{\beta}$ with $0<\beta<1$
(non-Lipschitz continuous) in (\ref{GeneralSPDE}) and 
$\dot{F}$ is an $\alpha$-stable ($\alpha\in(1,2)$) white noise on 
$[0,\infty)\times\mathbb{R}$ without negative
jumps, it is shown in
Mytnik \cite{Mytnik:2002} 
that there exists a weak solution $(u,F)$ satisfying
(\ref{GeneralSPDE}) by constructing a sequence of approximating processes that is tight with its limit solving the associated martingale problem,
and that in the case of $\alpha\beta=1$
the weak uniqueness 
of solution to (\ref{GeneralSPDE}) holds.
The martingale problem approach in
\cite{Mytnik:2002} depends primarily on the 
Laplace transform of $\alpha$-stable noise and
super-process theory which requires 
the assumption of non-negativity of jumps.
The pathwise uniqueness  is shown in \cite{Yang:2017}
for $\alpha\beta=1$ and $1<\alpha<\sqrt{5}-1$.

For $\alpha$-stable colored  noise $\dot{F}$ without negative jumps and  with H\"{o}lder continuous  coefficient $H$, Xiong and Yang \cite{Xiong:2019} proved the existence of 
weak
solution $(u,F)$ to (\ref{GeneralSPDE}) by showing the weak convergence of solutions to    SDE systems on rescaled lattice with discrete Laplacian and driven by common stable random measure, which is similar to \cite{Sturm:2003}.
In both \cite{Sturm:2003} and \cite{Xiong:2019} the dependence of colored  noise helps with establishing the existence of weak solution.

Inspired by work in the above mentioned literature, 
we are interested in the stochastic heat equation
(\ref{eq:originalequation1}) in which 
the noise coefficient $\varphi:\mathbb{R}\rightarrow\mathbb{R}$
satisfies the following more general hypothesis:
\begin{hypothesis}
\label{Hypo} 
$\varphi:\mathbb{R}\rightarrow \mathbb{R}$
is a continuous function with globally linear growth.
\end{hypothesis}

We consider two types of weak solutions in probabilistic sense that are 
measure-valued and function-valued, respectively.
In addition, 
we also study the uniform $p$-moment and
uniform stochastic continuity 
 of the weak solution to
equation (\ref{eq:originalequation1}).
In the case that
$\varphi$ is Lipschitz continuous, the existence
of the strong solution can be usually obtained by standard 
Picard iteration \cite{Walsh:1986,Dalang:2009}
or  Banach fixed point principle
\cite{Truman:2003,Bo:2006,Wang:2023}.
We thus mainly consider the case
that $\varphi$ is non-Lipschitz continuous. 
Since the classical approaches of 
Picard iteration and  Banach fixed point principle
fail for SPDE (\ref{eq:originalequation1}) with non-Lipschitz 
$\varphi$, 
to prove the existence of a weak solution  
$(u,L_{\alpha})$ to
(\ref{eq:originalequation1}), we first construct 
an approximating SPDE sequence with 
Lipschitz continuous noise coefficients 
$\varphi^n$ by using a convolution approximation, and give the existence and 
uniqueness of strong solutions 
to the approximating SPDEs. 
We then proceed to show that 
the sequence of solution is tight
in appropriate spaces. Finally, we prove that
there exists a weak solution of 
(\ref{eq:originalequation1}) by using a weak 
convergence procedure.

The main contribution of this paper
is proving the existence and regularity of  weak solutions to  equation
(\ref{eq:originalequation1}) under general
continuity assumption on noise coefficients,
which generalize  those in many previous works 
\cite{Albeverio:1998,Applebaum:2000,Balan:2014,Bo:2006, Peszat:2006, Dalang:2009, 
Bo:2011, Chong:2017, Wang:2023}
under Lipschitz continuity assumption.
In contrast to the martingale problem approach
used in \cite{Mytnik:2002}, our method aims to directly construct a sequence of approximating SPDEs that
does not rely on the specific form of noise coefficient and
the assumption of non-negativity of jumps, such that we can consider general non-Lipschitz continuous noise coefficients and noise with negative jumps. Our results also generalize the
result in \cite{Mytnik:2002} under truncated
$\alpha$-stable space-time white noise setting.
In addition, we show that there exists 
a function-valued solution of equation
(\ref{eq:originalequation1}) under further assumption $\alpha\in(1,3/5)$. To the best of our knowledge, this result is new for the
stochastic heat equation driven by  
truncated $\alpha$-stable space-time 
white noise with non-Lipschitz noise coefficients.

The rest of this paper is organized as follows. 
In the next section, we introduce some notation 
and the main theorems on the existence, uniform $p$-moment
and uniform stochastic continuity of weak solution to 
(\ref{eq:originalequation1}). 
Section \ref{sec3} is devoted to the proof of the existence of 
measure-valued  weak solution to (\ref{eq:originalequation1}).
In Section \ref{sec4}, for $\alpha\in(1,5/3)$
we prove that there exists a weak solution to (\ref{eq:originalequation1}) as an
$L^p$-valued process
with $p\in(\alpha,5/3)$, 
and that the weak solution 
has the finite uniform $p$-th moment 
and the uniform stochastic continuity in the $L^p$ norm  with $p\in(\alpha,5/3)$.

\section{Notation and main results}
\label{sec2}
\subsection{Notation}
\label{sec2.1}
Let $(\Omega, \mathcal{F}, (\mathcal{F}_t)_{t\geq0}, \mathbb{P})$ be a complete probability 
space with filtration
$(\mathcal{F}_t)_{t\geq0}$ satisfying the usual 
conditions, and let 
$
N(dt,dx,dz): [0,\infty)\times[0,L]\times
\mathbb{R}\setminus\{0\}\rightarrow \mathbb{N}
\cup\{0\}\cup\{\infty\} 
$
be a Poisson random measure on
$(\Omega, \mathcal{F}, (\mathcal{F}_t)_{t\geq0}, 
\mathbb{P})$ with intensity measure 
$dtdx\nu_{\alpha}(dz)$, where $dtdx$ denotes the Lebesgue measure on $[0,\infty)\times[0,L]$ and the
jump size measure $\nu_{\alpha}(dz)$ for $ \alpha\in(1,2) $ is given by
\begin{align}
\label{eq:smalljumpsizemeasure}
\nu_{\alpha}(dz):=(c_{+}z^{-\alpha-1}1_{(0,K]}(z)+c_{-}(-z)^{-\alpha-1}1_{[-K,0)}(z)
)dz,
\end{align}
where $c_{+}+c_{-}=1$ and 
$K>0$ is an arbitrary constant.
Define
\begin{align*}
\tilde{N}(dt,dx,dz):=
N(dt,dx,dz)-dtdx\nu_{\alpha}(dz).
\end{align*}
Then $\tilde{N}(dt,dx,dz)$
is the compensated Poisson random measure
(martingale measure)
on $[0,\infty)\times[0,L]\times
\mathbb{R}\setminus\{0\}$.
As in Balan \cite[Section 5]{Balan:2014}, 
define a martingale measure 
\begin{align}
\label{def:stablenoise}
L_{\alpha}(dt,dx):=
\int_{\mathbb{R}\setminus\{0\}}z\tilde{N}(dt,dx,dz)
\end{align} 
for $(t, x)\in [0,\infty)\times[0,L]$.
Then the corresponding distribution-valued derivative 
$\{\dot{L}_{\alpha}(t,x):t\in[0,\infty),x\in[0,L]\}$
is a truncated $\alpha$-stable space-time
white noise.  
Write $\mathcal{G}^{\alpha}$ for the class of almost
surely $\alpha$-integrable random functions defined by
\begin{align*}
\mathcal{G}^{\alpha}:=\left\{f\in\mathbb{B}:
\int_0^t\int_0^L\vert f(s,x)\vert ^{\alpha}dxds<\infty,
\mathbb{P}\text{-a.s.}\,\,\text{for all} \,\,t\in[0,\infty)\right\},
\end{align*}
where $\mathbb{B}$ is the space of progressively
measurable functions on 
$[0,\infty)\times[0,L]\times\Omega$. Then it
holds by \cite[Section 5]{Mytnik:2002} that the 
stochastic integral with respect to 
$\{L_{\alpha}(dx,ds)\}$
is well defined for all 
$f\in \mathcal{G}^{\alpha}$.

Throughout this paper, $C$ denotes the arbitrary 
positive constant whose value might vary 
from line to line. 
If $C$ depends on some parameters such as $p,T$, 
we denote it by $C_{p,T}$.

Let $G_t(x,y)$ be the fundamental 
solution of heat equation
$\frac{\partial u}{\partial t}
=\frac{1}{2}\frac{\partial^2 u}{\partial x^2}$ 
on the domain $[0,\infty)\times[0,L]\times[0,L]$
with  Dirichlet boundary conditions
(the subscript $t$ is not a derivative but a variable). 
Its explicit formula
(see, e.g., Feller \cite[Page 341]{Feller:1971})
is given by
\begin{equation*}
G_t(x,y)=\dfrac{1}{\sqrt{2\pi t}}\sum_{k=-\infty}^{+\infty}\left\{
\exp\left(-\dfrac{(y-x+2kL)^2}{2t}\right)
-\exp\left(-\dfrac{(y+x+2kL)^2}{2t}
\right)\right\}
\end{equation*}
for $t\in(0,\infty),x,y\in[0,L]$; and $\lim_{t\downarrow0}G_t(x,y)=\delta_y(x)$, 
 where
$\delta$ is the Dirac delta distribution. 
Moreover, it holds 
that for $s,t\in[0,\infty)$ and $x,y,z\in[0,L]$
\begin{equation}
\label{eq:Greenetimation0}
G_t(x,y)=G_t(y,x),\,\,
\int_0^L|G_t(x,y)|dy+\int_0^L|G_t(x,y)|dx\leq C,
\end{equation}
\begin{equation}
\label{eq:Greenetimation1}
\int_0^LG_s(x,y)G_t(y,z)dy=G_{t+s}(x,z),
\end{equation}
\begin{equation}
\label{eq:Greenetimation2}
\int_0^L\vert G_t(x,y)\vert ^pdy\leq Ct^{-\frac{p-1}{2}},\,\, p\geq1.
\end{equation}

Given a topological space $V$, 
let $D([0,\infty),V)$ 
be the space of c\`{a}dl\`{a}g paths from 
$[0,\infty)$
to $V$ equipped with the Skorokhod topology.
For any $p\geq1$,
we denote by $v_t\equiv\{v(t,\cdot),t\in[0,\infty)\}$ 
the $L^p([0,L])$-valued
process equipped with  norm
\begin{equation*}
\vert\vert v_t\vert\vert_{p}=\left(\int_0^L\vert v(t,x)\vert^pdx\right)^{\frac{1}{p}}.
\end{equation*}
For any $p\geq1$ and $T>0$,
let $L_{loc}^p([0,\infty)\times[0,L])$ be the
space of measurable functions $f$ on 
$[0,\infty)\times[0,L])$ such that
\begin{equation*}
\vert \vert f\vert \vert _{p,T}=\left(\int_0^T\int_0^L
\vert f(t,x)\vert ^pdxdt\right)^{\frac{1}{p}}<\infty,\,\,
\forall\,\, 0<T<\infty.
\end{equation*}

Let $\mathcal{S}([0,L])$ be the Schwartz space (the space of rapidly decreasing functions) on $[0,L]$. Let $B([0,L])$ be the space of all Borel functions on $[0,L]$, and
let $\mathbb{M}([0,L])$ be the space of finite 
Borel measures on $[0,L]$ equipped with 
the weak convergence topology.
For any $f\in B([0,L])$ and 
$\mu\in\mathbb{M}([0,L])$ define
$
\langle f,\mu\rangle:=\int_0^L f(x)\mu(dx)
$
whenever it exists.
With a slight abuse of notation, for any $f,g\in B([0,L])$ we also denote by
$
\langle f,g\rangle=\int_0^L f(x)g(x)dx.
$

\begin{remark}
There are two different approaches  of in the study of SPDEs:
the  approach of martingale measure (see, e.g., Walsh \cite{Walsh:1986})
and the approach of  infinite dimensional process
(see, e.g., Da Prato and Zabczyk  \cite{Prato:2014} and Peszat and Zabczyk \cite{Peszat:2007}). 
For a comparison of two approaches 
under the Gaussian setting, 
we refer to Dalang and Quer-Sardanyons \cite{Dalang:2011} and references therein.
In this paper, we adopt Walsh's martingale measure treatment to the L\'{e}vy space-time white noises. The noise can also be represented by the time derivative of impulsive cylindrical L\'{e}vy process (infinite dimensional L\'{e}vy process) in Peszat and Zabczyk 
\cite[Section 7.2]{Peszat:2007}. 
\end{remark}

\subsection{Main results}
\label{sec2.2}
By a solution  to
equation (\ref{eq:originalequation1})
we mean a process $u_t\equiv\{u(t,\cdot),t\in[0,\infty)\}$ satisfying the 
following analytical weak form equation:
\begin{align}
\label{eq:variationform}
\langle u_t,\psi\rangle
&=\langle u_0,\psi\rangle+\dfrac{1}{2}\int_0^t
\langle u_s,\psi{''}\rangle ds
+\int_0^{t+}\int_0^L\int_{\mathbb{R}\setminus\{0\}}
\varphi(u(s-,x))\psi(x)z\tilde{N}(ds,dx,dz)
\end{align}
for all $t\in[0,\infty)$ and for any $\psi\in \mathcal{S}([0,L])$
with $\psi(0)=\psi(L)=\psi^{'}(0)=\psi^{'}(L)=0$ or equivalently
satisfying the
following mild form equation:
\begin{align}
\label{eq:mildform}
u(t,x)=
\int_0^LG_t(x,y)u_0(y)dy
+\int_0^{t+}\int_0^L
\int_{\mathbb{R}\setminus\{0\}}G_{t-s}(x,y)
\varphi(u(s-,y))z\tilde{N}(ds,dy,dz)
\end{align}
for all $t\in [0, \infty)$ and 
for a.e. $x\in [0,L]$, where the last terms in above equations follow from
(\ref{def:stablenoise}). 
For the equivalence between the weak form equation (\ref{eq:variationform}) and mild form equation (\ref{eq:mildform}),
we refer to Yang and Zhou \cite[Proposition 2.2]{Yang:2017} or Li \cite[Theorem 7.26]{Li:2011}
and references therein.
We first give the definition 
of a weak solution in probabilistic sense
to equation (\ref{eq:originalequation1}).
\begin{definition}
Stochastic heat equation (\ref{eq:originalequation1})
has a weak solution in probabilistic sense with initial function $u_0$
if there exists a pair $(u,L_{\alpha})$
defined on some filtered probability space such that 
${L}_{\alpha}$ is a truncated $\alpha$-stable martingale measure on 
$[0,\infty)\times[0,L]$ and
$(u,L_{\alpha})$  satisfies either 
equation (\ref{eq:variationform}) 
or equation (\ref{eq:mildform}).
\end{definition}

We now state the main theorems in this paper. The first theorem is on the existence of weak solution in $D([0,\infty),\mathbb{M}([0,L]))\cap L_{loc}^p([0,\infty)\times [0,L])$ with $p\in(\alpha,2]$
 that was first  considered in  
 Mytnik \cite{Mytnik:2002}. 
\begin{theorem}
\label{th:mainresult} 
If the initial function $u_0$ 
satisfies $\mathbb{E}[\vert\vert u_0\vert\vert_p^p]<\infty$
for some $p\in(\alpha,2]$, then under
 {\rm Hypothesis \ref{Hypo}}
there exists a weak solution 
$(\hat{u}, {\hat{L}}_{\alpha})$ to equation (\ref{eq:originalequation1})
defined on a filtered probability space
$(\hat{\Omega}, \hat{\mathcal{F}}, \{\hat{\mathcal{F}}_t\}_{t\geq0},
\hat{\mathbb{P}})$ such that 
\begin{itemize}
\item[\rm (i)] 
$\hat{u}\in 
D([0,\infty),\mathbb{M}([0,L]))\cap L_{loc}^p([0,\infty)\times [0,L])$;
\item[\rm (ii)] ${\hat{L}}_{\alpha}$ is a truncated 
$\alpha$-stable martingale measure
with the same distribution as ${L}_{\alpha}$.
\end{itemize}
Moreover, for any $T>0$ we have
\begin{equation}
\label{eq:momentresult}
\hat{\mathbb{E}}\left[\vert\vert\hat{u}\vert\vert_{p,T}^p\right]=
\hat{\mathbb{E}}\left[\int_0^T\vert\vert\hat{u}_t\vert\vert_p^pdt\right]<\infty.
\end{equation}
\end{theorem}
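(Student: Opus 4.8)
~The plan is to obtain $(\hat u,\hat L_\alpha)$ as a weak limit of strong solutions to approximating equations with Lipschitz noise coefficients. Fix a mollifier $\rho\in C_c^\infty(\mathbb R)$ with $\rho\ge 0$ and $\int\rho=1$, put $\rho_n(\cdot)=n\rho(n\cdot)$ and $\varphi^n:=\varphi*\rho_n$. Each $\varphi^n$ is smooth and globally Lipschitz, the bound $|\varphi(x)|\le C(1+|x|)$ transfers to $|\varphi^n(x)|\le C(1+|x|)$ with a constant \emph{independent of $n$}, and $\varphi^n\to\varphi$ uniformly on compact sets. Since $\varphi^n$ is Lipschitz, a standard Picard iteration in the mild form \eqref{eq:mildform} yields, on the original space and driven by the same Poisson measure $N$, a unique strong solution $u^n$ of \eqref{eq:originalequation1} with $\varphi$ replaced by $\varphi^n$; it is progressively measurable, lies in $L_{loc}^p([0,\infty)\times[0,L])$, and defines an $\mathbb M([0,L])$-valued c\`{a}dl\`{a}g process (its jumps being finite point masses). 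The argument then has three steps: (a) uniform moment bounds, (b) tightness, (c) identification of the limit.

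For (a), apply to the stochastic-integral term of \eqref{eq:mildform} the standard $L^p$-inequality ($1\le p\le 2$) for integrals against $\tilde N$; since $\int_{\mathbb R}|z|^p\nu_\alpha(dz)<\infty$ exactly because $p>\alpha$ (with no upper restriction, the jumps being truncated at $K$), this gives $\mathbb E\big|u^n(t,x)-\int_0^LG_t(x,y)u_0(y)\,dy\big|^p\le C\,\mathbb E\int_0^t\int_0^L|G_{t-s}(x,y)|^p|\varphi^n(u^n(s,y))|^p\,dy\,ds$. Integrating in $x$, using $\int_0^L|G_{t-s}(x,y)|^p\,dx\le C(t-s)^{-(p-1)/2}$ from \eqref{eq:Greenetimation2}, the uniform linear growth of $\varphi^n$, and $\mathbb E\|u_0\|_p^p<\infty$, one obtains for $t\in[0,T]$
\begin{align*}
\mathbb E\|u^n_t\|_p^p\ \le\ C_{p,T}\big(1+\mathbb E\|u_0\|_p^p\big)+C\int_0^t(t-s)^{-(p-1)/2}\,\mathbb E\|u^n_s\|_p^p\,ds.
\end{align*}
Since $(p-1)/2<1$ for $p\le 2$, a (singular) Gronwall argument gives $\sup_n\sup_{t\le T}\mathbb E\|u^n_t\|_p^p\le C_{p,T}(1+\mathbb E\|u_0\|_p^p)$, hence $\sup_n\mathbb E\|u^n\|_{p,T}^p<\infty$. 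This is where the range $p\in(\alpha,2]$ enters; it is also why the measure-valued formulation works for every $\alpha\in(1,2)$ (the function-valued case being further restricted in Section~\ref{sec4}).

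For (b), by Jakubowski's tightness criterion (cf.\ Mitoma's theorem), tightness of $\{u^n\}$ in $D([0,\infty),\mathbb M([0,L]))$ reduces to a compact-containment estimate supplied by (a), together with tightness in $D([0,\infty),\mathbb R)$ of $\langle u^n_\cdot,\psi\rangle$ for $\psi$ in a countable separating family. From \eqref{eq:variationform}, the drift $\tfrac12\int_0^t\langle u^n_s,\psi''\rangle\,ds$ has finite variation controlled by (a), and for the martingale part $M^n_\cdot(\psi)$ Aldous's criterion applies: for stopping times $\tau\le T$ and $\delta>0$ the same $L^p$-inequality gives $\mathbb E|M^n_{\tau+\delta}(\psi)-M^n_\tau(\psi)|^p\le C_\psi\,\delta\,\big(1+\sup_n\sup_{s\le T}\mathbb E\|u^n_s\|_p^p\big)\to 0$ as $\delta\downarrow 0$, uniformly in $n$; bound (a) also gives tightness in $L_{loc}^p([0,\infty)\times[0,L])$. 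Hence, along a subsequence, the laws of $(u^{n_k},N)$ converge, and by Skorokhod's representation theorem there exist, on a new space $(\hat\Omega,\hat{\mathcal F},\hat{\mathbb P})$, copies $(\hat u^{n_k},\hat N^{n_k})$ --- still satisfying the $n_k$-th approximating equation --- converging $\hat{\mathbb P}$-a.s.\ to $(\hat u,\hat N)$, with $\hat u\in D([0,\infty),\mathbb M([0,L]))\cap L_{loc}^p$, $\hat u_0$ having the law of $u_0$, and $\hat N$ having the law of $N$. Let $\hat L_\alpha^{n_k},\hat L_\alpha$ be built from $\hat N^{n_k},\hat N$ via \eqref{def:stablenoise} and $\hat{\mathcal F}_t$ the augmented filtration generated by $(\hat u,\hat N)$; since $\hat N$ has the law of $N$ and does not anticipate $\hat u$ (an independence property of the finite-dimensional distributions, valid because $u^{n_k}$ is a strong solution), $\hat L_\alpha$ is a truncated $\alpha$-stable martingale measure adapted to $\hat{\mathcal F}_t$ with the same law as $L_\alpha$, which is part~(ii) of the theorem.

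The main obstacle is (c): passing to the limit in \eqref{eq:variationform} with $\varphi$ merely continuous. Letting $k\to\infty$ in the three convergent terms (at continuity times, then all $t$ by right-continuity), the process $\hat X_t(\psi):=\langle\hat u_t,\psi\rangle-\langle\hat u_0,\psi\rangle-\tfrac12\int_0^t\langle\hat u_s,\psi''\rangle\,ds$ is the a.s.\ limit of $\hat X^{n_k}_t(\psi):=\langle\hat u^{n_k}_t,\psi\rangle-\langle\hat u^{n_k}_0,\psi\rangle-\tfrac12\int_0^t\langle\hat u^{n_k}_s,\psi''\rangle\,ds$, which by the $n_k$-th equation equals the martingale $\int_0^{t+}\int_0^L\int\varphi^{n_k}(\hat u^{n_k}(s-,y))\psi(y)z\,\hat{\tilde N}^{n_k}(ds,dy,dz)$ (here $\hat{\tilde N}^{n_k}$, resp.\ $\hat{\tilde N}$, denotes the compensation of $\hat N^{n_k}$, resp.\ $\hat N$); by the uniform integrability from (a), $\hat X(\psi)$ is a purely discontinuous $\hat{\mathcal F}_t$-martingale. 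To identify it with $\int_0^{t+}\int_0^L\int\varphi(\hat u(s-,y))\psi(y)z\,\hat{\tilde N}(ds,dy,dz)$ --- well defined since $\varphi(\hat u)\in\mathcal G^\alpha$ by (a) and $p>\alpha$ --- one first upgrades the convergence of $\hat u^{n_k}$: weak (measure) convergence does not suffice to treat $\varphi^{n_k}(\hat u^{n_k})$, but the $L_{loc}^p$-convergence gives $\hat u^{n_k}\to\hat u$ a.e.\ along a further subsequence, whence $\varphi^{n_k}(\hat u^{n_k})\to\varphi(\hat u)$ a.e.\ by continuity and local uniform convergence of $\varphi^{n_k}$. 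One then computes the predictable covariations $\langle\hat X^{n_k}(\psi),\hat L_\alpha^{n_k}(\phi)\rangle_t=\big(\int z^2\nu_\alpha(dz)\big)\int_0^t\langle\psi\phi\,\varphi^{n_k}(\hat u^{n_k}_s),1\rangle\,ds$ and lets $k\to\infty$ --- crucially this expression is \emph{linear} in $\varphi^{n_k}(\hat u^{n_k})$, so the a.e.\ convergence together with the $L^p$-bound of (a) (with $p>1$, hence uniform integrability) suffices, and no $L^2$-control of $\hat u$, which would fail for $p<2$, is needed. Matching these, together with an identification of the jumps of $\hat X(\psi)$ at the atoms of $\hat N$ (the technical heart), and using that a purely discontinuous martingale with no jumps vanishes, yields $\hat X_t(\psi)=\int_0^{t+}\int_0^L\int\varphi(\hat u(s-,y))\psi(y)z\,\hat{\tilde N}(ds,dy,dz)$; hence $(\hat u,\hat L_\alpha)$ satisfies \eqref{eq:variationform}, which with the regularity established in (b) gives part~(i) of the theorem. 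Finally, \eqref{eq:momentresult} follows from the uniform bound in (a) and Fatou's lemma applied to $\hat u^{n_k}\to\hat u$ a.e.\ in $(t,x)$. The genuinely delicate point is this last identification: controlling the stochastic term with $\varphi$ only continuous and with only $p$-th-moment integrability, $p$ possibly below $2$.
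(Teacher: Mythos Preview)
Your overall architecture---mollify $\varphi$, solve the approximating equations, prove uniform $L^p$ bounds, establish tightness, pass to the limit---matches the paper's. Your moment estimate (a) is essentially the paper's Proposition~\ref{th:Approximainresult}, and your tightness argument via Mitoma plus Aldous is a legitimate alternative to the paper's use of the Ethier--Kurtz martingale/generator criterion (Lemma~\ref{lem:tightcriterion0} and Proposition~\ref{th:tightnessresult}); both routes work.

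The substantive divergence is in step (c), the identification of the limit, and here your approach has a real gap. You carry the Poisson measure $N$ through the Skorokhod representation together with $u^n$, and then propose to identify the limiting stochastic integral by matching predictable covariations with $\hat L_\alpha^{n_k}(\phi)$ and by ``identifying the jumps of $\hat X(\psi)$ at the atoms of $\hat N$''. The jump-matching step is problematic: the atoms of $\hat N$ sit at specific space--time points $(s_i,x_i)$, and you would need $\hat u^{n_k}(s_i-,x_i)\to\hat u(s_i-,x_i)$ there, but your only convergence of $\hat u^{n_k}$ is in $L^p_{loc}([0,\infty)\times[0,L])$ (hence a.e.\ along a subsequence), which gives no control at a countable set of prescribed points. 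The covariation computation also tacitly assumes $\hat X^{n_k}(\psi)$ is locally square integrable; with your mollifier $\varphi^n=\varphi*\rho_n$ (no truncation) and only $L^p$ control on $u^n$ for $p<2$, this is not guaranteed. Finally, carrying $N$ through Skorokhod and asserting that the copies still satisfy the approximating equation, and that $\hat N$ is a genuine $(\hat{\mathcal F}_t)$-Poisson random measure, are nontrivial measurability/adaptedness issues you only gesture at.

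The paper sidesteps all of this by \emph{not} transporting $N$. It applies Skorokhod to $(u^n)$ alone, defines $\hat M_t^n(\psi)$ purely from $\hat u^n$ via the weak formulation (no stochastic integral on the new space), and shows $\hat M^n(\psi)\to\hat M(\psi)$ is an $(\hat{\mathcal F}_t)$-martingale. It then identifies the \emph{quadratic variation} $\langle\hat M(\psi),\hat M(\psi)\rangle_t=\int_0^t\!\int_0^L\!\int\varphi(\hat u)^2\psi^2z^2\,ds\,dx\,\nu_\alpha(dz)$ as the limit of the approximating ones, and invokes the Konno--Shiga/Mytnik construction: this quadratic variation corresponds to an orthogonal martingale measure $\hat M(dt,dx,dz)$, from which a truncated $\alpha$-stable noise $\hat L_\alpha$ is \emph{reconstructed} by setting $\hat L_\alpha=\int\varphi(\hat u)^{-1}\mathbf 1_{\{\varphi(\hat u)\neq0\}}\,\hat M+\mathbf 1_{\{\varphi(\hat u)=0\}}\,\bar L_\alpha$ with an independent auxiliary noise $\bar L_\alpha$. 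This delivers part~(ii) and the equation~\eqref{eq:variationform} simultaneously, without any jump-matching at specific atoms and without needing $\hat N$ on the new space at all. That reconstruction step is the key idea you are missing.
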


The proof of Theorem \ref{th:mainresult} is deferred to Section \ref{sec3}.

Under additional assumption on $\alpha$, 
we can show that there exists a weak solution in 
$D([0,\infty),L^p([0,L]))$,
$p\in(\alpha,5/3)$
 with better regularity.
\begin{theorem}
\label{th:mainresult2}
Suppose that $\alpha\in (1,5/3)$. 
If the initial function $u_0$ 
satisfies $\mathbb{E}[||u_0||_p^p]<\infty$
for some $p\in(\alpha,5/3)$, 
then under {\rm Hypothesis \ref{Hypo}}
there exists a weak solution 
$(\hat{u}, {\hat{L}}_{\alpha})$ to equation (\ref{eq:originalequation1})
defined on a filtered probability space
$(\hat{\Omega}, \hat{\mathcal{F}}, \{\hat{\mathcal{F}}_t\}_{t\geq0},
\hat{\mathbb{P}})$ such that
\begin{itemize}
\item[\rm (i)] 
$\hat{u}\in D([0,\infty),L^p([0,L]))$; 
\item[\rm (ii)] ${\hat{L}}_{\alpha}$ is a
truncated
$\alpha$-stable martingale measure
with the same distribution as ${L}_{\alpha}$.
\end{itemize}
Furthermore, for any $T>0$ we have the following uniform $p$-moment and uniform stochastic continuity, that is,

\begin{equation}
\label{eq:momentresult2}
\hat{\mathbb{E}}\left[\sup_{0\leq t\leq T}\vert\vert\hat{u}_t\vert\vert_p^p\right]<\infty,
\end{equation}
and that
for each $0\leq h\leq\delta$ 
\begin{equation}
\label{eq:timeregular}
\lim_{\delta\rightarrow0}\hat{\mathbb{E}}\left[\sup_{0\leq t\leq T}\sup_{0\leq h\leq \delta}\vert\vert
\hat{u}_{t+h}-\hat{u}_t\vert\vert_p^p\right]=0.
\end{equation}
\end{theorem}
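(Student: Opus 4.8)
The plan is to follow the scheme behind Theorem \ref{th:mainresult}, but to carry along the stronger norm $\|\cdot\|_p$ throughout. First I would regularise the coefficient: take $\varphi^n=\varphi*\rho_n$ with $\rho_n$ a standard mollifier, so that each $\varphi^n$ is Lipschitz, $\varphi^n\to\varphi$ uniformly on compacts, and the family $\{\varphi^n\}$ inherits the linear-growth bound of Hypothesis \ref{Hypo} with a constant independent of $n$. Since $\varphi^n$ is Lipschitz, the mild equation (\ref{eq:mildform}) with $\varphi$ replaced by $\varphi^n$ admits a unique strong $L^p$-valued solution $u^n$ by the Banach fixed point principle, exactly as in the Lipschitz theory (cf. \cite{Wang:2023}); as part of this step I would also record that $u^n\in D([0,\infty),L^p([0,L]))$.

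The technical heart is a family of moment bounds for $u^n$ that are \emph{uniform in} $n$. Applying the moment inequality for the $L^p$-moment of a Poisson stochastic integral (available because $p\in(\alpha,5/3)\subset(1,2)$, and $\int_{\mathbb R\setminus\{0\}}|z|^p\nu_\alpha(dz)<\infty$ precisely because $p>\alpha$), together with the linear growth of $\varphi^n$ and the heat-kernel estimates (\ref{eq:Greenetimation0})--(\ref{eq:Greenetimation2}), one first gets $\sup_n\sup_{0\le t\le T}\mathbb E[\|u^n_t\|_p^p]<\infty$ via a singular Gronwall argument with kernel $(t-s)^{-(p-1)/2}$; in the same way one obtains the spatial regularity bound $\sup_n\sup_{0\le t\le T}\mathbb E[\|u^n_t\|_{W^{\theta,p}}^p]<\infty$ for some $\theta>0$, using the kernel estimate $\|G_t(x,\cdot)-G_t(x',\cdot)\|_{L^p}\le C|x-x'|^{\mu}t^{-(p-1)/(2p)-\mu}$, together with the conditional increment bound
\begin{equation*}
\sup_n\,\mathbb E\big[\|u^n_{t+h}-u^n_t\|_p^p\,\big|\,\mathcal F_t\big]\le C\,\big(h^{(3-p)/2}+\text{(deterministic part)}\big),
\end{equation*}
which in turn uses $\int_0^T\|G_{r+h}(\cdot,y)-G_r(\cdot,y)\|_{L^p}^p\,dr\le C\,h^{(3-p)/2}$. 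The genuinely delicate estimate is the passage from these to the bounds with a supremum over time, namely $\sup_n\mathbb E[\sup_{0\le t\le T}\|u^n_t\|_p^p]<\infty$ and $\sup_n\mathbb E[\sup_{0\le t\le T}\sup_{0\le h\le\delta}\|u^n_{t+h}-u^n_t\|_p^p]\le\omega(\delta)$ with $\omega(\delta)\to0$: here I would use the factorisation identity based on $\int_s^t(t-\sigma)^{\gamma-1}(\sigma-s)^{-\gamma}d\sigma=\pi/\sin\pi\gamma$ and the semigroup property (\ref{eq:Greenetimation1}) to move the time-supremum outside, reducing matters to an $L^p$-in-time bound on the auxiliary process $Y^n_\sigma=\int_0^\sigma(\sigma-s)^{-\gamma}G_{\sigma-s}\,dM^n_s$. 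Closing the outer time-convolution forces $\gamma>1/p$, while $\mathbb E\|Y^n_\sigma\|_p^p<\infty$ forces $\gamma p+(p-1)/2<1$; it is exactly the reconciliation of these two constraints, sharpened by splitting the noise into its small- and large-jump parts and interpolating the heat-kernel estimates, that pins the admissible range down to $\alpha\in(1,5/3)$ and $p\in(\alpha,5/3)$ (note $3(p-1)/2<1$ iff $p<5/3$). A dyadic chaining / Garsia--Rodemich--Rumsey step over $[0,T]$ then upgrades the resulting interval-wise bounds to the joint supremum over $t$ and $h$.

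Granted these uniform estimates, tightness of $\{u^n\}$ in $D([0,\infty),L^p([0,L]))$ follows from a standard criterion (Jakubowski/Aldous): fixed-time compact containment comes from the $W^{\theta,p}$-bound via Rellich--Kondrachov, and the oscillation control from the conditional increment bound at stopping times. I would also check tightness of the laws of $(u^n,L_\alpha)$ jointly, and invoke the Skorokhod representation theorem to realise a subsequence on one probability space $(\hat\Omega,\hat{\mathcal F},\{\hat{\mathcal F}_t\},\hat{\mathbb P})$ on which $\hat u^n\to\hat u$ in $D([0,\infty),L^p)$ almost surely and $\hat L^n_\alpha\to\hat L_\alpha$ almost surely, with $\hat L_\alpha$ a truncated $\alpha$-stable martingale measure of the prescribed law.

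The last step is to pass to the limit in the mild form (\ref{eq:mildform}). The linear term converges by continuity of $v\mapsto G_tv$ on $L^p$; for the stochastic term one combines the almost sure convergence $\hat u^n\to\hat u$, the uniform moment bounds (which give uniform integrability of the relevant integrals), and $\varphi^n\to\varphi$ uniformly on compacts, exactly as in the proof of Theorem \ref{th:mainresult}, to identify $(\hat u,\hat L_\alpha)$ as a weak solution with $\hat u\in D([0,\infty),L^p([0,L]))$. Finally, (\ref{eq:momentresult2}) and (\ref{eq:timeregular}) follow from the uniform-in-$n$ bounds by Fatou's lemma together with the lower semicontinuity, in the Skorokhod topology, of $v\mapsto\sup_{0\le t\le T}\|v_t\|_p^p$ and $v\mapsto\sup_{0\le t\le T}\sup_{0\le h\le\delta}\|v_{t+h}-v_t\|_p^p$ (using that $\hat u$ has no fixed times of discontinuity, up to choosing $T$ outside a countable set). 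The step I expect to be the main obstacle is precisely the uniform-in-$n$ control of the time-supremum of the stochastic convolution in the $L^p$-norm, and of the $L^p$-modulus of continuity in time: this is where the heat-kernel integrability budget is tightest, and it is what confines the argument to $\alpha\in(1,5/3)$.
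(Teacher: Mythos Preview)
Your overall scheme---mollify $\varphi$, get uniform-in-$n$ estimates via factorization, prove tightness in $D([0,\infty),L^p)$, apply Skorokhod, identify the limit, and conclude (\ref{eq:momentresult2})--(\ref{eq:timeregular}) by Fatou---is exactly the paper's strategy in Section~\ref{sec4}. A few of your technical choices differ, and one of them does not close as written.

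The concrete gap is in your factorization bookkeeping. With $Y^n_\sigma=\int_0^\sigma(\sigma-s)^{-\gamma}G_{\sigma-s}\,dM^n_s$, reducing the outer convolution to an $L^p$-in-time bound on $Y^n$ (so that ``closing the outer time-convolution forces $\gamma>1/p$'') together with the inner constraint $\gamma p+(p-1)/2<1$ yields $1/p<(3-p)/(2p)$, i.e.\ $p<1$, which is impossible here. The paper (Lemma~\ref{lem:uniformbound}) applies H\"older differently: after Minkowski and the contraction $\|G_{t-s}Y_s\|_p\le C\|Y_s\|_p$, it keeps the singularity with $\|Y_s\|_p^p$, obtaining $\int_0^T(T-s)^{(\beta-1)p}\mathbb E\|Y_s\|_p^p\,ds$ and hence the constraint $\beta>1-1/p$. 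Combined with $\beta p+(p-1)/2<1$ this gives precisely $3(p-1)/2<1$, i.e.\ $p<5/3$, which is the inequality you quote. So the conclusion you want is available, but not via the H\"older split you described; no jump-splitting or interpolation is needed. Relatedly, the Garsia--Rodemich--Rumsey step you propose is both unnecessary and ill-suited here (the process has jumps and the increment exponent $(3-p)/2<1$ for $p>1$): the paper obtains the temporal modulus with the supremum inside directly by factorization again (Lemma~\ref{lem:temporalestimation}), splitting $\mathcal J^\beta\mathcal J^n_\beta u^n_{t+h}-\mathcal J^\beta\mathcal J^n_\beta u^n_t$ into three pieces and using dominated convergence.

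The remaining differences are legitimate alternatives. For compact containment the paper does not use a $W^{\theta,p}$ bound and Rellich--Kondrachov; it verifies the Fr\'echet--Kolmogorov criterion (Lemma~\ref{lem:compactcriterion}) via a spatial-shift estimate (Lemma~\ref{lem:spatialestimation}) and a tail estimate on $(L-L/\gamma,L]$, then invokes the Ethier--Kurtz criterion (Lemma~\ref{lem:tightcriterion}) rather than Aldous. For the limit identification, the paper does not carry the noise through Skorokhod; instead it shows that the limiting martingale $\hat M_t(\psi)$ has quadratic variation $\int_0^t\!\int_0^L\!\int\varphi(\hat u)^2\psi^2z^2\,ds\,dx\,\nu_\alpha(dz)$ and then \emph{reconstructs} a truncated $\alpha$-stable measure $\hat L_\alpha$ from $\hat M$ \`a la Konno--Shiga/Mytnik, passing to the limit in the weak form (\ref{eq:variationform}) rather than the mild form. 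Your route (joint tightness of $(u^n,L_\alpha)$ and limit in the mild equation) can be made to work but requires extra care with the topology for the Poisson measure and with the kernel singularity at $t=s$; the paper's martingale-problem route sidesteps both issues.
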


The proof of Theorem \ref{th:mainresult2} is deferred to Section \ref{sec4}. 

\begin{remark}
Regarding the regularity of the weak solution obtained in {\rm Theorem \ref{th:mainresult2}}, we note that it is stochastically continuous or continuous in probability, even though it is not pathwise continuous. 
\end{remark}

Finally, we end up with this section with discussions of the main results in the following remarks.
\begin{remark}
Note that  the globally linear growth of 
$\varphi$
in {\rm Hypothesis \ref{Hypo}} guarantees the global existence 
of weak solutions. One can remove this condition 
if one only needs the existence of a weak solution up to the 
explosion time. 
On the other hand, the uniqueness of the solution to equation (\ref{eq:originalequation1}) is still an open problem because 
$\varphi$ is non-Lipschitz continuous
and the driven noise is white. 
\end{remark}

\begin{remark}
The weak solutions of equation (\ref{eq:originalequation1}) in {\rm Theorems \ref{th:mainresult}} and {\rm\ref{th:mainresult2}} are proved by  showing the tightness of the approximating solution sequence 
$(u^n)_{n\geq1}$ of equation (\ref{eq:approximatingsolution}); see  
{\rm Propositions \ref{th:tightnessresult}} and {\rm\ref{prop:tightnessresult2}}
in {\rm Sections \ref{sec3}} and {\rm \ref{sec4}}, respectively.
To show that the equation (\ref{eq:originalequation1}) has a function-valued
weak solution, it is necessary to restrict 
$\alpha\in(1,5/3)$ due to a technical reason 
that Doob's maximal inequality can not be directly applied to show the uniform $p$-moment estimate of 
$(u^n)_{n\geq1}$ that is key to the proof
of the tightness for $(u^n)_{n\geq1}$. To this end,
we apply the factorization method in {\rm Lemma
\ref{lem:uniformbound}} for transforming the stochastic integral such that the uniform $p$-moment of $(u^n)_{n\geq1}$ can be obtained.
In order to remove this restriction and consider the case of $\alpha\in(1,2)$, we apply
another tightness criteria, i.e., 
{\rm Lemma \ref{lem:tightcriterion0}}, to show the 
tightness of $(u^n)_{n\geq1}$. However, 
the weak solution of equation (\ref{eq:originalequation1}) is a 
measure-valued process in this situation.
We also note that
the existence of function-valued
weak solution of equation (\ref{eq:originalequation1}) in the case of 
$\alpha\in[5/3,2)$ is still an unsolved problem.
\end{remark}

\begin{remark}
If we remove the restriction of the bounded jumps for the $\alpha$-stable white noise 
$\dot{L}_{\alpha}$ in 
equation (\ref{eq:originalequation1}), 
the jump size measure
$\nu_{\alpha}(dz)$ in (\ref{eq:smalljumpsizemeasure})
becomes
\begin{align*}
\nu_{\alpha}(dz)=(c_{+}z^{-\alpha-1}1_{(0,\infty)}(z)+c_{-}(-z)^{-\alpha-1}1_{(-\infty,0)}(z)
)dz
\end{align*}
for $\alpha\in(1,2)$ and $c_{+}+c_{-}=1$.
As in Wang et al. \cite[Lemma 3.1]{Wang:2023}
we can construct a sequence of truncated 
$\alpha$-stable white noise $\dot{L}^K_{\alpha}$
with the jumps size measure given by 
(\ref{eq:smalljumpsizemeasure}) and a sequence
of stopping times $(\tau_K)_{K\geq1}$ such that
\begin{equation}
\label{eq:stoppingtimes1}
\lim_{K\rightarrow+\infty}
\tau_K=\infty,\,\,\mathbb{P}\text{-a.s.}.
\end{equation}
Similar to equation (\ref{eq:originalequation1}),
for given $K\geq1$, 
we can
consider the following non-linear stochastic heat equation
\begin{equation}
\label{eq:truncatedSHE}
\left\{\begin{array}{lcl}
\dfrac{\partial u_K(t,x)}{\partial t}=\dfrac{1}{2}
\dfrac{\partial^2u_K(t,x)}{\partial x^2}+
\varphi(u_K(t-,x))\dot{L}^K_{\alpha}(t,x),
&& (t,x)\in (0,\infty)
\times(0,L),\\[0.3cm]
u_K(0,x)=u_0(x),&&x\in[0,L],\\[0.3cm]
u_K(t,0)=u_K(t,L)=0,&& t\in[0,\infty).
\end{array}\right.
\end{equation}

If $\varphi$ is Lipschitz continuous,
similar to the proof of 
{\rm Proposition \ref{th:Approximainresult}} in 
Wang et al. \cite[Proposition 3.2]{Wang:2023}
one can show that there exists a unique strong solution 
$u_K=\{u_K(t,\cdot),t\in[0,\infty)\}$ to equation
(\ref{eq:truncatedSHE}) by using the
Banach fixed point principle. On the other hand,  
by Wang et al. \cite[Lemma 3.4]{Wang:2023},
it holds for each $K\leq N$ that
\begin{align*}
u_K=u_N
\,\,\mathbb{P}\text{-} \,a.s. 
\,\,\text{on}\,\{t<\tau_K\}.
\end{align*}
By setting 
\begin{align*}
u=u_K,\,0\leq t<\tau_K,
\end{align*}
and by the fact (\ref{eq:stoppingtimes1}),
we obtain the strong (weak) solution 
$u$ to equation
(\ref{eq:originalequation1}) with noise of
unbounded
jumps via letting $K\uparrow+\infty$.

If $\varphi$ is non-Lipschitz continuous,
for any  $K\geq1$,
{\rm Theorem \ref{th:mainresult}} or 
{\rm Theorem \ref{th:mainresult2}}
shows that there exists a weak solution 
$(\hat{u}_K, {\hat{L}}^K_{\alpha})$ to equation (\ref{eq:truncatedSHE})
defined on a filtered probability space
$(\hat{\Omega}, \hat{\mathcal{F}}, \{\hat{\mathcal{F}}_t\}_{t\geq0},
\hat{\mathbb{P}})_K$. 
However, we can not
show that for each $K\leq N$ 
\begin{align*}
(\hat{u}_K, {\hat{L}}^K_{\alpha})=(\hat{u}_N, {\hat{L}}^N_{\alpha})
\,\,\mathbb{P}\text{-} \,a.s. 
\,\,\text{on}\,\,\{t<\tau_K\}
\end{align*}
due to the non-Lipschitz continuity of 
$\varphi$. Therefore, we do not know
whether there exists a common probability
space $(\hat{\Omega}, \hat{\mathcal{F}}, \{\hat{\mathcal{F}}_t\}_{t\geq0},
\hat{\mathbb{P}})$ on which all of the 
weak solutions $((\hat{u}_K, {\hat{L}}^K_{\alpha}))_{K\geq1}$ are defined. 
Hence, the localization
method in  Wang et al. \cite{Wang:2023} 
becomes invalid,
and the existence of the weak solution
to equation (\ref{eq:originalequation1}) with untruncated $\alpha$-stable noise remains an unsolved problem.
\end{remark} 

\begin{remark}
It is not difficult to 
consider the stochastic heat equation 
(\ref{eq:originalequation1}) 
with Dirichlet boundary conditions on a bounded domain, which is given as
follows: 
\begin{equation}
\label{eq:SHEd-dim}
\left\{\begin{array}{lcl}
\dfrac{\partial u(t,x)}{\partial t}=\dfrac{1}{2}
\dfrac{\partial^2u(t,x)}{\partial x^2}+
\varphi(u(t-,x))\dot{L}_{\alpha}(t,x),
&& (t,x)\in (0,\infty)
\times\mathcal{O},\\[0.3cm]
u(0,x)=u_0(x),&&x\in\mathcal{O},\\[0.3cm]
u(t,x)=0,&& (t,x)\in[0,\infty)\times\partial\mathcal{O},
\end{array}\right.
\end{equation}
where $\mathcal{O}$ is a bounded domain of $\mathbb{R}^d(d\geq2)$ with boundary 
$\partial\mathcal{O}$ of class $C^3$. 
The mild form of the above equation is given by
\begin{align*}
u(t,x)=
\int_{\mathcal{O}}\tilde{G}_t(x,y)u_0(y)dy
+\int_0^{t+}\int_{\mathcal{O}}
\int_{\mathbb{R}\setminus\{0\}}\tilde{G}_{t-s}(x,y)
\varphi(u(s-,y))L_{\alpha}(ds,dy)
\end{align*}
for all $t\geq0$  and for a.e. $x\in\mathcal{O}$,
where $\tilde{G}_t(x,y)$ is the heat kernel on 
$(0,\infty)\times\mathcal{O}\times\mathcal{O}$ with
Dirichlet boundary conditions
and $\lim_{t\downarrow0}\tilde{G}_t(x,y)=\delta_x(y)$.
In this situation, we can still  construct a 
sequence of approximating SPDEs with noise 
coefficients $\varphi^n$ given by 
{\rm Lemma \ref{le:approxi-varphi}}
in {\rm Section \ref{sec3}}. 
Under the assumption
$\alpha\in(1,\min\{2,1+2/d\})$ 
and $p\in(\alpha,\min\{2,1+2/d\})$,
one can show that
for each $n\geq1$ there exists a 
pathwise unique strong solution $u^n$ of the
approximating SPDE; see, e.g., 
Balan \cite{Balan:2014} and Wang et al. 
\cite{Wang:2023} for more details. 
With the approximating solution sequence in hand, one
can follow the same  procedure in this paper
to prove that {\rm Theorem \ref{th:mainresult}} holds
for equation (\ref{eq:SHEd-dim}) under the assumption
$\alpha\in(1,\min\{2,1+2/d\})$ 
and $p\in(\alpha,\min\{2,1+2/d\})$, and that 
{\rm Theorem \ref{th:mainresult2}} holds
for equation (\ref{eq:SHEd-dim}) under the assumption
$\alpha\in(1,1+2/(2+d))$ 
and $p\in(\alpha,1+2/(2+d))$.
We omit the  details because 
they are the same as the 
proof in this paper for $d=1$, except
replacing the heat kernel estimates 
(\ref{eq:Greenetimation0})-(\ref{eq:Greenetimation2}) by 
\begin{equation*}
\tilde{G}_t(x,y)=\tilde{G}_t(y,x),\,\,
\int_{\mathcal{O}}|\tilde{G}_t(x,y)|dy+\int_{\mathcal{O}}|\tilde{G}_t(x,y)|dx\leq C,
\end{equation*}
\begin{equation*}
\int_{\mathcal{O}}\tilde{G}_s(x,y)\tilde{G}_t(y,z)dy=\tilde{G}_{t+s}(x,z),
\end{equation*}
and
\begin{align*}
\int_{\mathcal{O}}\vert \tilde{G}_t(x,y)\vert ^pdy\leq Ct^{-\frac{d(p-1)}{2}},\,\, p\geq1,\,\,d\geq2.
\end{align*}
For more general kernel estimates, we refer to
Peszat and Zabczyk
\cite[Theorem 2.6]{Peszat:2007} 
and references therein.
\end{remark}

\section{Proof of Theorem \ref{th:mainresult}}\label{sec3}
The proof of Theorem \ref{th:mainresult}
proceeds in the following three steps.
We first
construct a sequence of the approximating SPDEs with globally Lipschitz continuous noise coefficients 
$(\varphi^n)_{n\geq1}$
using a convolution approximation; see Lemma \ref{le:approxi-varphi},
and show that for each fixed $n\geq1$ there exists
a unique strong
solution $u^n$ in $D([0,\infty),L^p([0,L])$ with $p\in(\alpha,2]$
of the approximating SPDE; see Proposition \ref{th:Approximainresult}.
We then prove that the
approximating solution sequence $(u^n)_{n\geq1}$ is tight in both $D([0,\infty),\mathbb{M}([0,L]))$ and $L_{loc}^p([0,\infty)\times[0,L])$
for all $p\in(\alpha,2]$; 
see Proposition \ref{th:tightnessresult}.
Finally, we proceed to show that there exists a weak solution 
$(\hat{u},\hat{L}_{\alpha})$
to equation (\ref{eq:originalequation1}) 
defined on another probability space
$(\hat{\Omega}, \hat{\mathcal{F}}, 
\{\hat{\mathcal{F}}_t\}_{t\geq0},
\hat{\mathbb{P}})$
by applying a weak convergence argument.

To construct a sequence of the approximating SPDEs with
Lipschitz continuous
noise coefficients, we first use the heat kernel
\begin{align*}
P_t(x)=\frac{1}{\sqrt{2\pi t}}e^{-\frac{x^2}{2t}},\,t>0,x\in\mathbb{R}
\end{align*}
to smooth the continuous function 
$\varphi$
in the following lemma. This technique is also
applied in Xiong and Yang \cite{Xiong:2023}
under the Gaussian colored noise setting.
\begin{lemma}
\label{le:approxi-varphi}
For any $n\geq1$  define
\begin{align*}
\varphi^n(x):=\int_{\mathbb{R}}P_{1/n}(x-y)[(\varphi(y)\wedge n)\vee(-n)]dy, \,\,x\in\mathbb{R}.
\end{align*}
Then
$\lim_{n\rightarrow\infty}\varphi^n(x)
=\varphi(x)$ for all $x\in\mathbb{R}$,
and 
$\varphi^n$
satisfies Lipschitz condition for any fixed $n\geq1$, i.e., 
there exists a constant $C_n$ such that
\begin{align*}
|\varphi^n(x_1)-\varphi^n(x_2)|
\leq C_n|x_1-x_2|,\,\,\forall x_1,x_2\in\mathbb{R}.
\end{align*}
\end{lemma}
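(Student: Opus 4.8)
The plan is to prove Lemma~\ref{le:approxi-varphi} by exploiting the standard smoothing properties of the Gaussian heat kernel $P_{1/n}$ acting as a mollifier. Write $\varphi_n^{\flat}(y):=(\varphi(y)\wedge n)\vee(-n)$ for the truncated function, so that $\varphi^n=P_{1/n}*\varphi_n^{\flat}$. Note first that $|\varphi_n^{\flat}(y)|\le n$ for all $y$, and since $\varphi$ has globally linear growth by Hypothesis~\ref{Hypo}, we also have $|\varphi_n^{\flat}(y)|\le|\varphi(y)|\le C(1+|y|)$ uniformly in $n$. These two bounds, together with $\int_{\mathbb{R}}P_{1/n}(x-y)\,dy=1$, make all the integrals below absolutely convergent and justify the manipulations.

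For the pointwise convergence $\varphi^n(x)\to\varphi(x)$, I would split the error as
\begin{equation*}
\varphi^n(x)-\varphi(x)=\int_{\mathbb{R}}P_{1/n}(x-y)\bigl(\varphi_n^{\flat}(y)-\varphi(x)\bigr)\,dy,
\end{equation*}
and then further decompose $\varphi_n^{\flat}(y)-\varphi(x)=\bigl(\varphi_n^{\flat}(y)-\varphi(y)\bigr)+\bigl(\varphi(y)-\varphi(x)\bigr)$. For the second piece, use the continuity of $\varphi$ at $x$ together with the concentration of the mollifier: given $\varepsilon>0$ pick $\delta>0$ with $|\varphi(y)-\varphi(x)|<\varepsilon$ for $|y-x|<\delta$, bound the contribution from $|y-x|\ge\delta$ using the linear growth of $\varphi$ and the Gaussian tail $\int_{|y-x|\ge\delta}P_{1/n}(x-y)(1+|y|)\,dy\to0$ as $n\to\infty$. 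For the first piece, observe that for fixed $x$ and $y$ in a bounded neighbourhood of $x$, once $n>\sup_{|y-x|<\delta}|\varphi(y)|$ we have $\varphi_n^{\flat}(y)=\varphi(y)$ on that neighbourhood, so that term contributes nothing near $x$; away from $x$ it is again controlled by the Gaussian tail and linear growth. Combining these estimates gives $\limsup_n|\varphi^n(x)-\varphi(x)|\le\varepsilon$ for every $\varepsilon>0$.

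For the Lipschitz bound with $n$ fixed, differentiate under the integral sign: since $P_{1/n}\in C^\infty$ with all derivatives bounded and integrable and $\varphi_n^{\flat}$ is bounded by $n$, the function $\varphi^n$ is smooth with
\begin{equation*}
(\varphi^n)'(x)=\int_{\mathbb{R}}P_{1/n}'(x-y)\,\varphi_n^{\flat}(y)\,dy,
\qquad
|(\varphi^n)'(x)|\le n\int_{\mathbb{R}}|P_{1/n}'(z)|\,dz=:C_n<\infty,
\end{equation*}
where $\int_{\mathbb{R}}|P_{1/n}'(z)|\,dz=2P_{1/n}(0)=2\sqrt{n/(2\pi)}$ can be computed explicitly but is not needed beyond finiteness. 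The mean value theorem then yields $|\varphi^n(x_1)-\varphi^n(x_2)|\le C_n|x_1-x_2|$ for all $x_1,x_2\in\mathbb{R}$. (Alternatively, without differentiating, one can change variables and write $\varphi^n(x_1)-\varphi^n(x_2)=\int\bigl(P_{1/n}(z-x_1)-P_{1/n}(z-x_2)\bigr)\varphi_n^{\flat}(z)\,dz$ and use $|P_{1/n}(z-x_1)-P_{1/n}(z-x_2)|\le\|P_{1/n}'\|_\infty|x_1-x_2|$ on the bounded support issue — but the differentiation-under-the-integral route is cleanest.)

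The only genuinely delicate point is the pointwise convergence step: one must handle simultaneously the two approximations built into $\varphi^n$ — the truncation at level $\pm n$ and the mollification at scale $1/n$ — and the fact that $\varphi$ is only assumed continuous (not uniformly continuous), so the argument must be localised near the fixed point $x$. The linear growth hypothesis is exactly what controls the tail contributions after truncation, and once the decomposition above is in place the estimates are routine. I expect this to be the main obstacle, with the Lipschitz estimate being entirely standard.
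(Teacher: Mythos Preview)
Your proof is correct and follows the same approach as the paper, just with the details spelled out. The paper's own proof is a two-line sketch appealing to $\lim_{n\to\infty}P_{1/n}=\delta$ for the convergence and to $\varphi^n\in C_b^{\infty}(\mathbb{R})$ for the Lipschitz bound; your decomposition of the error and the differentiation-under-the-integral argument are exactly the routine verifications behind those two facts.
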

\begin{proof}
The first conclusion follows from
the fact
$\lim_{n\rightarrow\infty}P_{1/n}(x)=\delta_x$
and the second one is obtained by the fact
$\varphi^n\in C_{b}^{\infty}(\mathbb{R})$.
\qed
\end{proof}

For each fixed $n\geq1$, 
we construct the approximate SPDE
of the form
\begin{equation}
\label{eq:approximatingsolution}
\left\{\begin{array}{lcl}
\dfrac{\partial u^n(t,x)}{\partial t}=\dfrac{1}{2}\dfrac{\partial^2u^n(t,x)}
{\partial x^2}+\varphi^n(u^n(t-,x))\dot{L}_{\alpha}(t,x),&& (t,x)\in (0,\infty)
\times (0,L),\\[0.3cm]
u^n(0,x)=u_0(x),&&x\in [0,L],
\\[0.3cm]
u^n(t,0)=u^n(t,L)=0,&&t\in[0,\infty),
\end{array}\right.
\end{equation}
where the noise coefficient $\varphi^n$ 
is given by Lemma \ref{le:approxi-varphi}.

Given $n\geq1$, by a solution to
equation (\ref{eq:approximatingsolution})
we mean a process $u^n_t\equiv\{u^n(t,\cdot), t\in[0,\infty)\}$ 
satisfying the following
analytical weak form equation:
\begin{align}
\label{eq:approxivariationform}
\langle u^n_t,\psi\rangle
&=\langle u_0,\psi\rangle+\dfrac{1}{2}\int_0^t
\langle u^n_s,\psi{''}\rangle ds
+\int_0^{t+}\int_0^L\int_{\mathbb{R}\setminus\{0\}}
\varphi^n(u^n(s-,x))\psi(x)z\tilde{N}(ds,dx,dz)
\end{align}
for all $t\in[0,\infty)$ and for any 
$\psi\in \mathcal{S}([0,L])$
with $\psi(0)=\psi(L)=\psi^{'}(0)=\psi^{'}(L)=0$ or equivalently
satisfying the
following mild form equation:
\begin{align}
\label{mildformapproxi0}
u^n(t,x)&=\int_0^LG_t(x,y)u_0(y)dy
+\int_0^{t+}\int_0^L
\int_{\mathbb{R}\setminus\{0\}}G_{t-s}(x,y)
\varphi^n(u^n(s-,y))z\tilde{N}(ds,dy,dz)
\end{align}
for all $t\in [0, \infty)$ and  for a.e. $x\in [0, L]$.

We now present the definition
(see also in Wang et al. \cite{Wang:2023}) 
of a strong solution 
to stochastic heat equation (\ref{eq:approximatingsolution}).
\begin{definition}
Given $p\geq 1$, the stochastic heat equation (\ref{eq:approximatingsolution})
has a strong solution in $D([0,\infty),L^p([0,L]))$ 
 with initial function $u_0$
if for a given truncated $\alpha$-stable 
martingale measure $L_{\alpha}$ there exists 
a process
$u^n_t\equiv\{u^n(t,\cdot),t\in[0,\infty)\}$ in $D([0,\infty),L^p([0,L]))$ such that 
either 
equation (\ref{eq:approxivariationform}) 
or equation (\ref{mildformapproxi0}) holds.
\end{definition}

Note that for each $n\geq1$ the noise coefficient 
$\varphi^n$ is not only Lipschitz continuous but also of globally linear growth. Indeed,
for a given $\epsilon>0$ and $n_0\in\mathbb{N}$ 
large enough, Lemma \ref{le:approxi-varphi} and the 
globally linear growth of $\varphi$ imply that
\begin{equation}
\label{eq:glo-lin-growth}
|\varphi^n(x)|\leq|\varphi^n(x)-\varphi(x)|
+|\varphi(x)|\leq\epsilon+C(1+|x|),\,\, 
\forall n\geq n_0,\, \forall x\in\mathbb{R}.
\end{equation}
Therefore, we can use the classical Banach fixed point 
principle to show the existence and pathwise 
uniqueness of the strong solution to equation (\ref{eq:approximatingsolution}). Since the proof is standard, we just state the main result in the following
proposition. For more details of the proof, we refer to  Wang et al. \cite[Proposition 3.2]{Wang:2023} and references
therein. 
Also note that
the same method was  applied in 
Truman and Wu \cite{Truman:2003}
and in Bo and Wang \cite{Bo:2006}
where the stochastic Burgers equation
and the stochastic {C}ahn-{H}illiard equation
driven by L\'{e}vy space-time white noise were studied,
respectively.

\begin{proposition}
\label{th:Approximainresult}
For a given $n\geq1$, if the initial 
function $u_0$
satisfies $\mathbb{E}[||u_0||_p^p]<\infty$
for some $p\in(\alpha,2]$, 
then 
there exists a pathwise unique strong 
solution 
$u^n_t\equiv\{u^n(t,\cdot),t\in[0,\infty)\}$
to equation (\ref{eq:approximatingsolution}) such that for any $T>0$
\begin{equation}
\label{eq:Approximomentresult}
\sup_{n\geq1}\sup_{0\leq t\leq T}\mathbb{E}\left[\vert \vert u^n_t\vert \vert _p^p\right]<\infty.
\end{equation}
\end{proposition}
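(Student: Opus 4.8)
The plan is to rewrite the approximating equation (\ref{eq:approximatingsolution}) in its mild form (\ref{mildformapproxi0}) and to solve it by a Picard iteration, which is available precisely because $\varphi^n$ is globally Lipschitz by Lemma \ref{le:approxi-varphi}. Concretely, set $u^{n,0}(t,x)=\int_0^L G_t(x,y)u_0(y)\,dy$ and, for $k\ge0$,
\[
u^{n,k+1}(t,x)=\int_0^L G_t(x,y)u_0(y)\,dy+\int_0^{t+}\!\!\int_0^L\!\!\int_{\mathbb{R}\setminus\{0\}} G_{t-s}(x,y)\,\varphi^n\big(u^{n,k}(s-,y)\big)\,z\,\tilde N(ds,dy,dz).
\]
The analytic inputs are the heat-kernel bounds (\ref{eq:Greenetimation0})--(\ref{eq:Greenetimation2}) together with the Bichteler--Jacod-type moment inequality for stochastic integrals against $\tilde N$, which for exponents $p\in[1,2]$ reads
\[
\mathbb{E}\Big[\Big|\int_0^{t+}\!\!\int_0^L\!\!\int_{\mathbb{R}\setminus\{0\}} g(s,y,z)\,\tilde N(ds,dy,dz)\Big|^p\Big]\le C_p\,\mathbb{E}\Big[\int_0^t\!\!\int_0^L\!\!\int_{\mathbb{R}\setminus\{0\}} |g(s,y,z)|^p\,\nu_\alpha(dz)\,dy\,ds\Big].
\]
The hypothesis $p\in(\alpha,2]$ enters in two ways: since $\nu_\alpha$ is truncated to $[-K,K]$, one has $\int_{\mathbb{R}\setminus\{0\}}|z|^p\,\nu_\alpha(dz)<\infty$ exactly when $p>\alpha$, and $p\le2$ is what makes the above inequality available without an additional quadratic-bracket term.

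Next I would carry out the contraction estimate. Writing $D^k_t:=\mathbb{E}\big[\|u^{n,k}_t-u^{n,k-1}_t\|_p^p\big]$ and applying the moment inequality with $g(s,y,z)=G_{t-s}(x,y)\big[\varphi^n(u^{n,k}(s-,y))-\varphi^n(u^{n,k-1}(s-,y))\big]z$, then integrating in $x$, using Tonelli's theorem, the symmetry $G_t(x,y)=G_t(y,x)$ from (\ref{eq:Greenetimation0}) so that $\int_0^L|G_t(x,y)|^p\,dx\le Ct^{-(p-1)/2}$ by (\ref{eq:Greenetimation2}), and finally $|\varphi^n(a)-\varphi^n(b)|\le C_n|a-b|$, one obtains
\[
D^{k+1}_t\le C\,C_n^p\int_0^t (t-s)^{-(p-1)/2}\,D^k_s\,ds .
\]
Since $(p-1)/2<1$, the iterated convolution powers of the kernel $t^{-(p-1)/2}$ are summable (a Mittag--Leffler-type bound), so $\sum_{k\ge1}\sup_{0\le t\le T}D^k_t<\infty$ for each $T>0$; hence $(u^{n,k})_{k\ge1}$ converges, its limit $u^n$ solves (\ref{mildformapproxi0}), and pathwise uniqueness follows from the same inequality via a singular (fractional) Gronwall lemma. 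That $u^n$ is an $L^p([0,L])$-valued c\`adl\`ag process follows from the standard estimates on the stochastic convolution $t\mapsto\int_0^{t+}\!\int_0^L\!\int_{\mathbb{R}\setminus\{0\}} G_{t-s}(x,y)\varphi^n(u^n(s-,y))z\,\tilde N(ds,dy,dz)$, for which I would follow Wang et al.\ \cite{Wang:2023}.

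The point that genuinely requires care is the uniformity in $n$ of (\ref{eq:Approximomentresult}). Running the same chain of estimates on $u^n$ itself, the initial-value term is controlled by H\"older's inequality and (\ref{eq:Greenetimation0}), giving $\|\int_0^L G_t(\cdot,y)u_0(y)\,dy\|_p^p\le C\|u_0\|_p^p$ with $C$ independent of $t\in[0,T]$, while for the noise term one needs a linear-growth bound on $\varphi^n$ that does not depend on $n$. This is where the construction in Lemma \ref{le:approxi-varphi} pays off: since truncation cannot increase magnitude, $|(\varphi(y)\wedge n)\vee(-n)|\le|\varphi(y)|\le C(1+|y|)$, and therefore
\[
|\varphi^n(x)|\le\int_{\mathbb{R}}P_{1/n}(x-y)\,C(1+|y|)\,dy\le C\Big(1+|x|+\int_{\mathbb{R}}P_{1/n}(z)|z|\,dz\Big)\le C(1+|x|),\qquad n\ge1,
\]
with $C$ independent of $n$ (here $\int_{\mathbb{R}}P_{1/n}(z)|z|\,dz\le c/\sqrt n\le c$); in particular $|\varphi^n(x)|^p\le C_p(1+|x|^p)$ uniformly in $n$. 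Together with the heat-kernel estimates this leads to
\[
\mathbb{E}\big[\|u^n_t\|_p^p\big]\le C_{p,T}\big(1+\mathbb{E}[\|u_0\|_p^p]\big)+C_{p,T}\int_0^t (t-s)^{-(p-1)/2}\big(1+\mathbb{E}\big[\|u^n_s\|_p^p\big]\big)\,ds,\qquad t\in[0,T],
\]
with $C_{p,T}$ not depending on $n$. As $\sup_{0\le t\le T}\mathbb{E}[\|u^n_t\|_p^p]$ is finite by the Picard construction, the singular Gronwall lemma (valid because $(p-1)/2<1$) gives $\sup_{0\le t\le T}\mathbb{E}[\|u^n_t\|_p^p]\le C_{p,T}(1+\mathbb{E}[\|u_0\|_p^p])$ with $C_{p,T}$ independent of $n$, and taking the supremum over $n$ yields (\ref{eq:Approximomentresult}). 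The main obstacle is thus isolating the $n$-uniform linear-growth bound for $(\varphi^n)_{n\ge1}$ above; once it is in place, the singular Gronwall inequality closes the argument, the rest being the standard Picard/Banach-fixed-point machinery for Lipschitz SPDEs.
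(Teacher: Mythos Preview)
Your proposal is correct and follows essentially the same approach as the paper: the paper itself omits the proof, citing the Banach fixed point principle (which your Picard iteration implements) and referring to \cite{Wang:2023} for the details, after first recording the $n$-uniform linear-growth bound (\ref{eq:glo-lin-growth}) on $\varphi^n$. Your direct derivation of that bound from the convolution structure of $\varphi^n$ is in fact cleaner than the paper's triangle-inequality argument, but the overall strategy---Lipschitz fixed-point for existence and uniqueness, uniform linear growth plus singular Gronwall for (\ref{eq:Approximomentresult})---is the same.
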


\begin{remark}
By {\rm Lemma \ref{le:approxi-varphi}}, (\ref{eq:glo-lin-growth}) and 
estimate (\ref{eq:Approximomentresult}),
the stochastic integral on the right-hand side of
(\ref{mildformapproxi0}) is well defined.
\end{remark}

We are going to prove that the approximating 
solution sequence 
$(u^n)_{n\geq1}$ is tight in both
$D([0,\infty),\mathbb{M}([0,L]))$ and 
$L_{loc}^p([0,\infty)\times[0,L])$
for all $p\in(\alpha,2]$
by using the following tightness criteria; 
see, e.g., Xiong and Yang \cite[Lemma 2.2]{Xiong:2019}.
Note that this tightness criteria can be obtained by
Ethier and Kurtz \cite[Theorems 3.9.1, 3.9.4 and 3.2.2]{Ethier:1986}.
\begin{lemma}
\label{lem:tightcriterion0}
Given  a complete and separable metric space $E$,
let $(X^n=\{X^n(t),t\in[0,\infty)\})_{n\geq1}$ be a sequence of stochastic processes 
with sample paths in $D([0,\infty),E)$, and let $C_a$ be a subalgebra and dense subset of 
$C_b(E)$ (the bounded continuous functions space on $E$). 
Then the sequence $(X^n)_{n\geq1}$ is tight in $D([0,\infty),E)$ 
if both of the following conditions hold:
\begin{itemize}
\item[\rm (i)] For every $\varepsilon>0$ and $T>0$ 
there exists a compact set 
$\Gamma_{\varepsilon,T}\subset E$ such that
\begin{equation}
\label{eq:tightcriterion1}
\inf_{n\geq1}\mathbb{P}[X^n(t)\in\Gamma_{\varepsilon,T}\,\, \text{for all}\,\, t\in[0,T]
]\geq1-\varepsilon.
\end{equation}
\item[\rm (ii)] 
For each $f\in C_a$, there exists a process $g_n\equiv\{g_n(t),t\in[0,\infty)\}$
such that 
\begin{equation*}
f(X^n(t))-\int_0^tg_n(s)ds
\end{equation*}
is an $(\mathcal{F}_t)$-martingale and
\begin{align}
\label{eq:tight-moment}
\sup_{0\leq t\leq T}\mathbb{E}\left[\vert f(X^n(t))\vert +\vert g_n(t)\vert \right]<\infty
\end{align}
and
\begin{align}
\label{eq:tight-moment2}
\sup_{n\geq1}\mathbb{E}\left[\left(\int_0^T\vert g_n(t)\vert ^qdt\right)^{\frac{1}{q}}\right]<\infty
\end{align}
for each $T>0$ and $q>1$.
\end{itemize}
\end{lemma}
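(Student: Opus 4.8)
My plan is to deduce this lemma from the general compactness machinery of Ethier and Kurtz \cite{Ethier:1986} (the statement also appears as \cite[Lemma 2.2]{Xiong:2019}). Since $E$ is complete and separable, the Skorokhod space $D([0,\infty),E)$ is again a complete separable metric space, so by Prohorov's theorem (\cite[Theorem 3.2.2]{Ethier:1986}) it suffices to prove that the laws of $(X^n)_{n\geq1}$ form a relatively compact set of probability measures. Because $C_a$ is a dense subalgebra of $C_b(E)$, it separates points (indeed strongly separates points), and \cite[Theorem 3.9.1]{Ethier:1986} then reduces this relative compactness to checking two things: first, the compact containment condition, which is exactly hypothesis~(i) above (note that $\inf_n\mathbb{P}[\,\cdot\,]\geq1-\varepsilon$ gives $\liminf_n\mathbb{P}[\,\cdot\,]\geq1-\varepsilon$); and second, that for every $f\in C_a$ the real-valued sequence $(f\circ X^n)_{n\geq1}$ is relatively compact in $D([0,\infty),\mathbb{R})$.

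For the second point I would fix $f\in C_a$, set $Z^n_t:=f(X^n(t))$, and apply the Aldous criterion to $(Z^n)_{n\geq1}$. By hypothesis~(ii), $M^n_t:=Z^n_t-\int_0^tg_n(s)\,ds$ is an $(\mathcal{F}_t)$-martingale, integrable by (\ref{eq:tight-moment}), so optional sampling applies at bounded stopping times. Compact containment for $(Z^n)$ is automatic since $f$ is bounded, so $Z^n_t\in[\inf f,\sup f]$ for all $t$ and $n$. For the oscillation estimate, given a bounded stopping time $\tau\leq T$ and $0\leq h\leq\delta\leq1$, optional sampling yields
\[
\mathbb{E}\bigl[|Z^n_{\tau+h}-Z^n_\tau|\bigr]=\mathbb{E}\Bigl[\Bigl|\int_\tau^{\tau+h}g_n(s)\,ds\Bigr|\Bigr]\leq\mathbb{E}\Bigl[\int_\tau^{\tau+h}|g_n(s)|\,ds\Bigr]\leq\delta^{1-1/q}\,\mathbb{E}\Bigl[\Bigl(\int_0^{T+1}|g_n(s)|^q\,ds\Bigr)^{1/q}\Bigr],
\]
where the last step is Hölder's inequality in $s$ with exponents $q$ and $q/(q-1)$ over an interval of length at most $\delta$. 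By (\ref{eq:tight-moment2}) this is bounded by $C_T\,\delta^{1-1/q}$, which tends to $0$ as $\delta\downarrow0$ uniformly in $n$ since $q>1$; Markov's inequality then delivers the Aldous condition. Hence $(f\circ X^n)_{n\geq1}$ is relatively compact in $D([0,\infty),\mathbb{R})$ by \cite[Theorem 3.9.4]{Ethier:1986}.

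Combining the two conditions, \cite[Theorem 3.9.1]{Ethier:1986} gives relative compactness of $(X^n)_{n\geq1}$ in $D([0,\infty),E)$, and Prohorov's theorem turns this into tightness. The one step that needs genuine care is the oscillation bound: one must extract from the martingale decomposition a modulus of continuity for $f\circ X^n$ that is uniform in $n$ and vanishes as $\delta\downarrow0$. This is precisely where the strict inequality $q>1$ in (\ref{eq:tight-moment2}) is used, since Hölder's inequality then produces the positive power $\delta^{1-1/q}$; with $q=1$ one would only obtain a bound that need not vanish. Everything else is routine bookkeeping, matching the stated hypotheses to the cited results in \cite{Ethier:1986}.
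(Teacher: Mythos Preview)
Your overall architecture matches the paper's: it does not prove this lemma either, but simply cites \cite[Theorems 3.9.1, 3.9.4 and 3.2.2]{Ethier:1986} and \cite[Lemma~2.2]{Xiong:2019}. So in spirit you are doing exactly what the paper intends. However, your attempt to fill in the details has a genuine gap in the oscillation estimate.

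The displayed equality
\[
\mathbb{E}\bigl[|Z^n_{\tau+h}-Z^n_\tau|\bigr]=\mathbb{E}\Bigl[\Bigl|\int_\tau^{\tau+h}g_n(s)\,ds\Bigr|\Bigr]
\]
is false. Optional sampling gives $\mathbb{E}[M^n_{\tau+h}-M^n_\tau]=0$, hence $\mathbb{E}[Z^n_{\tau+h}-Z^n_\tau]=\mathbb{E}[\int_\tau^{\tau+h}g_n(s)\,ds]$, but once absolute values are inside the expectation the martingale increment $M^n_{\tau+h}-M^n_\tau$ does \emph{not} vanish (take $g_n\equiv 0$ and $M^n$ a Brownian motion for a counterexample). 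Your H\"older step therefore only controls the finite-variation part of $Z^n$, and the martingale part is left completely unhandled. The Aldous criterion cannot be concluded from what you wrote.

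The repair is exactly where the hypothesis that $C_a$ is an \emph{algebra} enters, which your argument never uses. Since $f\in C_a$ implies $f^2\in C_a$, hypothesis~(ii) also yields a process $\tilde g_n$ with $f^2(X^n_t)-\int_0^t\tilde g_n(s)\,ds$ a martingale. Conditioning on $\mathcal{F}_\tau$ and expanding the square gives
\[
\mathbb{E}\bigl[(Z^n_{\tau+h}-Z^n_\tau)^2\,\big|\,\mathcal{F}_\tau\bigr]
=\mathbb{E}\Bigl[\int_\tau^{\tau+h}\tilde g_n(s)\,ds\,\Big|\,\mathcal{F}_\tau\Bigr]
-2Z^n_\tau\,\mathbb{E}\Bigl[\int_\tau^{\tau+h} g_n(s)\,ds\,\Big|\,\mathcal{F}_\tau\Bigr],
\]
and now boundedness of $f$ together with H\"older and \eqref{eq:tight-moment2} (applied to both $g_n$ and $\tilde g_n$) produces a bound of order $\delta^{1-1/q}$, uniformly in $n$. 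This is essentially how \cite[Theorem~3.9.4]{Ethier:1986} proceeds; your direct route through $L^1$-Aldous with only the decomposition for $f$ does not close.
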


Before showing the tightness of solution sequence 
$(u^n)_{n\geq1}$, we first find a uniform moment estimate in the following lemma.
\begin{lemma}
\label{le:uniformbounded}
For each $n\geq1$ let $u^n$ be the strong solution
to equation (\ref{eq:approximatingsolution}) 
given by 
{\rm Proposition \ref{th:Approximainresult}}.
Then for given $T>0$ and 
$\psi\in \mathcal{S}([0,L])$ with $\psi(0)=\psi(L)=\psi^{'}(0)=\psi^{'}(L)=0$, we have for $p\in(\alpha,2]$
that
\begin{align}
\label{eq:p-uniform}
\sup_{n\geq1}\mathbb{E}\left[\sup_{0\leq t\leq T} 
\left\vert \int_{0}^Lu^n(t,x)\psi(x)dx\right\vert ^p\right]<\infty.
\end{align}
\end{lemma}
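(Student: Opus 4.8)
The plan is to start from the weak form equation \eqref{eq:approxivariationform}, which expresses $\langle u^n_t,\psi\rangle$ as the sum of the deterministic term $\langle u_0,\psi\rangle$, the drift integral $\frac{1}{2}\int_0^t\langle u^n_s,\psi''\rangle\,ds$, and the stochastic integral against $\tilde N$. I would bound the $p$-th moment of the supremum over $[0,T]$ of each of the three pieces separately. The first term is handled immediately by $|\langle u_0,\psi\rangle|\le \|\psi\|_\infty\|u_0\|_1\le C_{\psi,L}\|u_0\|_p$ and the hypothesis $\mathbb{E}[\|u_0\|_p^p]<\infty$. For the drift term, since $\psi\in\mathcal S([0,L])$ with the stated boundary conditions, $\psi''$ is bounded, so $\sup_{0\le t\le T}\big|\frac12\int_0^t\langle u^n_s,\psi''\rangle\,ds\big|^p\le C_{\psi}\,T^{p-1}\int_0^T\|u^n_s\|_p^p\,ds$ by Jensen/Hölder, and taking expectations and using the uniform moment bound \eqref{eq:Approximomentresult} from Proposition \ref{th:Approximainresult} gives a finite bound uniform in $n$.

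The main work is the stochastic integral term $M^n_t:=\int_0^{t+}\int_0^L\int_{\mathbb{R}\setminus\{0\}}\varphi^n(u^n(s-,x))\psi(x)z\,\tilde N(ds,dx,dz)$, which is a c\`adl\`ag $(\mathcal F_t)$-martingale. The natural tool is a Burkholder--Davis--Gundy-type inequality for $p\in(\alpha,2]\subset(1,2]$; because $p\le 2$ one can use the elementary inequality $\mathbb{E}\big[\sup_{0\le t\le T}|M^n_t|^p\big]\le C_p\,\mathbb{E}\big[\big(\sum_{s\le T}|\Delta M^n_s|^2\big)^{p/2}\big]\le C_p\,\mathbb{E}\big[\sum_{s\le T}|\Delta M^n_s|^p\big]$, where the last step uses $|\cdot|^{p/2}$ subadditivity since $p/2\le 1$. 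Compensating, this is controlled by $C_p\,\mathbb{E}\big[\int_0^T\int_0^L\int_{\mathbb{R}\setminus\{0\}}|\varphi^n(u^n(s,x))|^p|\psi(x)|^p|z|^p\,\nu_\alpha(dz)\,dx\,ds\big]$. The truncation in \eqref{eq:smalljumpsizemeasure} makes $\int_{\mathbb{R}\setminus\{0\}}|z|^p\,\nu_\alpha(dz)<\infty$ precisely because $p>\alpha$ (and the jumps are bounded by $K$), so this constant is finite; then $|\varphi^n(u^n(s,x))|^p\le C(1+|u^n(s,x)|^p)$ uniformly in $n\ge n_0$ by the uniform linear-growth bound \eqref{eq:glo-lin-growth}, and $\|\psi\|_\infty^p$ absorbs the $\psi$ factor. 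What remains is $C_{p,\psi,T}\big(1+\mathbb{E}\big[\int_0^T\|u^n_s\|_p^p\,ds\big]\big)$, which is bounded uniformly in $n$ by \eqref{eq:Approximomentresult}; the finitely many $n<n_0$ are handled individually, or one simply replaces $C$ by $\max$ over them.

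The step I expect to be the genuine obstacle is justifying the BDG / jump-moment inequality in the correct form for the $\tilde N$-stochastic integral in the $p\in(1,2]$ range — in particular making sure the passage from $\mathbb{E}[(\sum|\Delta M|^2)^{p/2}]$ to the compensated integral $\mathbb{E}[\int\int\int |\cdot|^p\,\nu_\alpha\,dx\,ds]$ is legitimate (this is where $p\le 2$ is used, via $x\mapsto x^{p/2}$ concavity, rather than via the full moment inequality for $p\ge 2$, which would have required handling the predictable quadratic variation differently). Once that inequality is in place, everything else is the routine combination above: triangle inequality in $L^p(\mathbb P)$ across the three terms, the elementary bounds on $\psi$, the uniform growth bound on $\varphi^n$, and the uniform moment estimate \eqref{eq:Approximomentresult}. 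I would also remark that the well-definedness of $M^n$ and the martingale property follow from \eqref{eq:glo-lin-growth} and \eqref{eq:Approximomentresult} together with $f\in\mathcal G^\alpha$, as already noted in the remark following Proposition \ref{th:Approximainresult}.
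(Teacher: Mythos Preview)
Your proposal is correct and follows essentially the same route as the paper: decompose $\langle u^n_t,\psi\rangle$ via the weak form \eqref{eq:approxivariationform} into the initial term, the drift integral, and the martingale $M^n_t$, and bound each separately; for $M^n$ both you and the paper apply Doob/BDG together with the subadditivity $\bigl(\sum a_i^2\bigr)^{p/2}\le \sum|a_i|^p$ for $p\le 2$, then compensate and use $\int|z|^p\,\nu_\alpha(dz)<\infty$ (from $p>\alpha$ and the truncation) along with the uniform linear growth \eqref{eq:glo-lin-growth} and \eqref{eq:Approximomentresult}.

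The only noteworthy difference is in the drift term. You bound $\bigl|\tfrac12\int_0^t\langle u^n_s,\psi''\rangle\,ds\bigr|^p$ directly by $C_{\psi,T}\int_0^T\|u^n_s\|_p^p\,ds$ and then invoke \eqref{eq:Approximomentresult} to finish. The paper instead bounds this term by $C_{p,T}\int_0^T\mathbb{E}\bigl[\sup_{r\le s}|\langle u^n_r,\psi\rangle|^p\bigr]\,ds$, i.e.\ it feeds the drift back into the quantity being estimated and closes with Gronwall's lemma. Your route is slightly more direct since the needed moment bound is already available from Proposition~\ref{th:Approximainresult}; the paper's Gronwall closure would be the natural choice had that a priori estimate not been in hand. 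Either way the conclusion follows.
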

\begin{proof}
By (\ref{eq:approxivariationform}), it holds that for each $n\geq1$
\begin{align*}
\mathbb{E}\left[\sup_{0\leq t\leq T} 
\left\vert \int_{0}^Lu^n(t,x)\psi(x)dx\right\vert ^p\right]\leq C_p(A_1+A_2+A_3),
\end{align*}
where 
\begin{align*}
A_1&=\mathbb{E}\left[\left\vert \int_0^Lu_0(x)\psi(x)dx\right\vert ^p\right],
\\
A_2&=\mathbb{E}\left[\sup_{0\leq t\leq T} 
\left\vert \int_0^{t}\int_0^Lu^n(s,x)\psi^{''}(x)dxds
\right\vert ^p\right],
\\
A_3&=\mathbb{E}\left[\sup_{0\leq t\leq T} 
\left\vert \int_0^{t+}\int_0^L\int_{\mathbb{R}\setminus\{0\}}\varphi^n(u^n(s-,x))\psi(x)z\tilde{N}(ds,dx,dz)
\right\vert ^p\right].
\end{align*}

For $p\in(\alpha,2]$ we separately estimate $A_1$, $A_2$ and $A_3$ as follows. 
For $A_1$, it holds by H\"{o}lder's inequality that
\begin{align*}
A_1&\leq C_p\left(\int_0^L\vert \psi(x)\vert ^{\frac{p}{p-1}}dx\right)^{\frac{p(p-1)}{p}}
\mathbb{E}\left[\int_0^L\vert u_0(x)\vert ^pdx\right]
\leq C_p\mathbb{E}[\vert \vert u_0\vert \vert _p^p]
\leq C_p
\end{align*}
due to $\psi\in \mathcal{S}([0,L])$ and $\mathbb{E}[\vert \vert u_0\vert \vert _p^p]<\infty$.

For $A_2$, it holds by $\psi\in \mathcal{S}([0,L])$
and H\"{o}lder's inequality that
\begin{align*}
A_2&\leq C_p\mathbb{E}\left[\sup_{0\leq t\leq T} 
\left\vert \int_0^{t}\int_0^Lu^n(s,x)\psi(x)dxds
\right\vert ^p\right]
\leq C_{p,T}\int_0^{T}
\mathbb{E}\left[\sup_{0\leq r\leq s} 
\left\vert \int_0^Lu^n(r,x)\psi(x)dx
\right\vert ^p\right]ds.
\end{align*}

For $A_3$, the Doob maximal inequality and the Burkholder-Davis-Gundy inequality
 imply that
\begin{align*}
A_3&\leq C_p \mathbb{E}\left[\left|\int_0^{T}\int_0^L\int_{\mathbb{R}\setminus\{0\}}\vert \varphi^n(u^n(s-,x))
\psi(x)z\vert^2N(ds,dx,dz)\right|^{\frac{p}{2}}\right]
\\
&\leq C_p \mathbb{E}\left[\int_0^{T}\int_0^L\int_{\mathbb{R}\setminus\{0\}}\vert \varphi^n(u^n(s-,x))
\psi(x)z\vert ^pN(ds,dx,dz)\right]
\\
&=C_p \mathbb{E}\left[\int_0^{T}\int_0^L\int_{\mathbb{R}\setminus\{0\}}\vert \varphi^n(u^n(s,x))
\psi(x)z\vert ^pdsdx\nu_{\alpha}(dz)\right],
\end{align*}
where the second inequality follows from the fact that
\begin{align}
\label{eq:element-inequ}
\left\vert \sum_{i=1}^ka_i^2\right\vert ^{\frac{q}{2}}\leq 
\sum_{i=1}^{k}\vert a_i\vert ^q
\end{align}
for $a_i\in\mathbb{R}, k\geq1$, 
and $q\in(0,2]$.
By (\ref{eq:smalljumpsizemeasure}), it holds that for $p>\alpha$
\begin{align}
\label{eq:jumpestimate}
\int_{\mathbb{R}\setminus\{0\}}\vert z\vert ^p\nu_{\alpha}
(dz)=c_{+}\int_0^Kz^{p-\alpha-1}dz+c_{-}\int_{-K}^0(-z)^{p-\alpha-1}dz
=\frac{K^{p-\alpha}}{p-\alpha},
\end{align}
then there exists a constant $C_{p,K,\alpha}$ such that
\begin{align*}
A_3\leq C_{p,K,\alpha}
\mathbb{E}\left[\int_0^{T}\int_0^L\vert \varphi^n(u^n(s,x))\psi(x)\vert ^pdsdx\right].
\end{align*}

By (\ref{eq:glo-lin-growth}), 
$\psi\in \mathcal{S}([0,L])$
and (\ref{eq:Approximomentresult}) in
Proposition \ref{th:Approximainresult},
it is easy to see that
\begin{align*}
A_3&\leq C_{p,K,\alpha,T}\left(1+\sup_{0\leq s\leq T}
\mathbb{E}[\vert \vert u^n_s\vert \vert _p^p]\right)\leq
C_{p,K,\alpha,T}.
\end{align*}
Combining the estimates $A_1,A_2$ and $A_3$, we have
\begin{align*}
\mathbb{E}&\left[\sup_{0\leq t\leq T} 
\left\vert \int_{0}^Lu^n(t,x)\psi(x)dx\right\vert ^p\right]
\leq 
C_{p,K,\alpha,T}+C_{p,T}\int_0^{T}\mathbb{E}\left[\sup_{0\leq r\leq s} 
\left\vert \int_0^Lu^n(r,x)\psi(x)dx\right\vert ^p\right]ds
\end{align*}
Therefore, it holds by  Gronwall's lemma that for $p\in(\alpha,2]$
\begin{align*}
\sup_{n\geq1}\mathbb{E}\left[\sup_{0\leq t\leq T} 
\left\vert \int_{0}^Lu^n(t,x)\psi(x)dx\right\vert ^p\right]<\infty,
\end{align*}
which completes the proof.
\qed
\end{proof}

Note that for any function 
$v\in L^q[0,L]$ with $q\geq1$
one can identify $L^q([0,L])$ as a subset 
of $\mathbb{M}([0,L])$ (the space of finite Borel measures on $[0,L]$) using the following correspondence
$$v(x)\mapsto v(x)dx.$$
Then for each $n\geq1$ one can identify the 
solution $u^n$ of equation (\ref{eq:approximatingsolution})
as a $\mathbb{M}([0,L])$-valued solution (still denoted by $u^n$). Let $\mathcal{S}([0,L])$ be the Schwartz space (the space of rapidly decreasing functions) on $[0,L]$.
For each $n\geq1$ and 
$\psi\in \mathcal{S}([0,L])$ with 
$\psi(0)=\psi(L)=\psi^{'}(0)=\psi^{'}(L)=0$, let
$\langle u^n,\psi\rangle\equiv\{\langle u_t^n,\psi\rangle,t\geq0\}$
be a real-valued process.
%
Then by (\ref{eq:approxivariationform}) we have
for any $t\geq0$ that
\begin{align*}
\langle u^n_t,\psi\rangle
&=\langle u_0,\psi\rangle+\dfrac{1}{2}\int_0^t
\langle u^n_s,\psi{''}\rangle ds
+\int_0^{t+}\int_0^L\int_{\mathbb{R}\setminus\{0\}}
\varphi^n(u^n(s-,x))\psi(x)z\tilde{N}(ds,dx,dz).
\end{align*}
Since the last term of the above equation is 
a martingale having a  c\`{a}dl\`{a}g 
modification,  it is not difficult to see
that
$(\langle u^n,\psi\rangle)_{n\geq1}$ is a sequence of processes whose sample paths are in $D([0,\infty),\mathbb{R})$, which implies that
$(u^n)_{n\geq1}$ is a sequence of processes whose sample paths are in $D([0,\infty),\mathbb{M}([0,L]))$, see, e.g., Li \cite[Section 12.3]{	Li:2011} and references therein.
We now show the tightness of
$(u^n)_{n\geq1}$ in the following proposition.
\begin{proposition}
\label{th:tightnessresult}
The solution sequence 
$(u^n)_{n\geq1}$ to 
equation (\ref{eq:approximatingsolution}) given by 
{\rm Proposition \ref{th:Approximainresult}}
is tight in both 
$D([0,\infty),\mathbb{M}([0,L]))$ and 
$L_{loc}^p([0,\infty)\times [0,L])$ for $p\in(\alpha,2]$. 
Let $u$ be an arbitrary limit point of $u^n$. 
Then 
\begin{align}
\label{eq:tightresult}
u\in D([0,\infty),\mathbb{M}([0,L]))
\cap L_{loc}^p([0,\infty)\times [0,L])
\end{align}
for $p\in(\alpha,2]$.
\end{proposition}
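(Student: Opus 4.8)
The plan is to verify the two conditions of the tightness criterion in Lemma \ref{lem:tightcriterion0} applied with $E=\mathbb{M}([0,L])$, and then to upgrade the resulting tightness in $D([0,\infty),\mathbb{M}([0,L]))$ to tightness in $L_{loc}^p([0,\infty)\times[0,L])$. For the choice of the test class $C_a$, I would take functions on $\mathbb{M}([0,L])$ of the form $\mu\mapsto F(\langle\psi_1,\mu\rangle,\dots,\langle\psi_k,\mu\rangle)$ with $F\in C_b^\infty(\mathbb{R}^k)$ and $\psi_i$ smooth satisfying the boundary conditions $\psi_i(0)=\psi_i(L)=\psi_i'(0)=\psi_i'(L)=0$; this is an algebra, and it is dense in $C_b(\mathbb{M}([0,L]))$ by a Stone--Weierstrass argument since such functionals separate points of $\mathbb{M}([0,L])$. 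For condition (ii), I would apply Itô's formula for the jump process $t\mapsto\langle u^n_t,\psi\rangle$, whose semimartingale decomposition is already given by (\ref{eq:approxivariationform}): the drift of $f(X^n(t))$ is $g_n(t)$, which consists of a term $\tfrac12\sum_i\partial_iF\,\langle u^n_s,\psi_i''\rangle$ coming from the finite-variation part plus an integral against $\nu_\alpha$ of the jump corrections of $F$. The two moment bounds (\ref{eq:tight-moment}) and (\ref{eq:tight-moment2}) then follow from boundedness of $F$ and its derivatives, the uniform bound (\ref{eq:Approximomentresult}) in Proposition \ref{th:Approximainresult}, the globally linear growth of $\varphi^n$ in (\ref{eq:glo-lin-growth}), and the finiteness $\int_{\mathbb{R}\setminus\{0\}}|z|^p\nu_\alpha(dz)<\infty$ from (\ref{eq:jumpestimate}); here one should be a little careful to estimate the $\nu_\alpha$-integral of the second-order Taylor remainder of $F$ using $|z|^2\wedge|z|^p$ and the fact that $\alpha<2$.

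The main obstacle is condition (i), the compact containment in $\mathbb{M}([0,L])$ uniformly in $n$. A subset of $\mathbb{M}([0,L])$ is relatively compact (in the weak topology) iff it is bounded in total mass (and, on a compact interval, tightness of mass is automatic), so (\ref{eq:tightcriterion1}) reduces to showing that for every $\varepsilon,T>0$ there is $R$ with $\inf_n\mathbb{P}[\,\sup_{t\le T}u^n_t([0,L])\le R\,]\ge1-\varepsilon$. Since $u^n$ need not be nonnegative, I would instead work with $\langle u^n_t,\mathbf{1}\rangle$ — more precisely approximate the constant function $1$ by smooth $\psi$ satisfying the boundary conditions, or better, apply Lemma \ref{le:uniformbounded} directly: that lemma gives $\sup_n\mathbb{E}[\sup_{t\le T}|\langle u^n_t,\psi\rangle|^p]<\infty$ for each admissible $\psi$. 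Taking an increasing sequence $\psi_m\uparrow 1$ of such test functions, monotone convergence together with Fatou's lemma yields a uniform bound on $\mathbb{E}[\sup_{t\le T}(\langle u^n_t,|u^n_t|\cdot\text{sgn}\rangle)]$; combined with the $L^p_{loc}$ moment control below this controls the total mass, and Markov's inequality then gives the compact containment. I expect this passage — going from test-function estimates to a genuine uniform bound on the total-variation mass of a signed-measure-valued process — to be the technically delicate point, and it may be cleanest to first establish the $L^p_{loc}$ bound and deduce mass control from it.

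For the $L^p_{loc}$ tightness, I would show $\sup_n\mathbb{E}[\|u^n\|_{p,T}^p]=\sup_n\mathbb{E}[\int_0^T\|u^n_t\|_p^p\,dt]<\infty$ for $p\in(\alpha,2]$, which is immediate from (\ref{eq:Approximomentresult}) by Tonelli, so that $(u^n)$ is bounded in $L^p(\Omega;L^p([0,T]\times[0,L]))$ for every $T$; boundedness in $L^p$ of a reflexive space gives tightness of the laws on $L^p_{loc}$ equipped with its weak topology, but to get tightness in the norm topology of $L^p_{loc}$ one needs a compactness input, which I would obtain from the mild form (\ref{mildformapproxi0}): the deterministic part $\int_0^L G_t(x,y)u_0(y)\,dy$ is smooth for $t>0$, and a factorization/fractional-Sobolev estimate on the stochastic convolution, using the heat-kernel bounds (\ref{eq:Greenetimation0})--(\ref{eq:Greenetimation2}) and the moment bound (\ref{eq:Approximomentresult}), places $u^n$ uniformly in a compactly embedded space such as a fractional Sobolev or Besov space $W^{\delta,p}$ for small $\delta>0$ — this is where the restriction $p<2$ (hence $p<5/3$ in Theorem~\ref{th:mainresult2}, but only $p\le 2$ here) enters through integrability of $t^{-(p-1)/2}$. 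Finally, combining the two tightness statements, any subsequential limit $u$ lies in $D([0,\infty),\mathbb{M}([0,L]))\cap L_{loc}^p([0,\infty)\times[0,L])$ by the portmanteau theorem (the set on the right is closed, being an intersection of a closed subset of the Skorokhod space with the closure of a bounded $L^p$-ball), establishing (\ref{eq:tightresult}).
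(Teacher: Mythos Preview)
Your verification of condition (ii) is essentially the paper's argument: apply It\^{o}'s formula to $f(\langle u^n_t,\psi\rangle)$ for $f\in C_b^2(\mathbb{R})$, identify $g_n$ as the drift plus the compensated jump correction $\mathcal{D}(u,v)=f(u+v)-f(u)-vf'(u)$, and bound $|\mathcal{D}(u,v)|\le C(|v|\wedge|v|^p)$ to control the $\nu_\alpha$-integral via (\ref{eq:jumpestimate}) and (\ref{eq:glo-lin-growth}). That part is fine.

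The gap is in your treatment of condition (i). You apply Lemma \ref{lem:tightcriterion0} with $E=\mathbb{M}([0,L])$ and then try to establish compact containment there, which---as you yourself flag---requires a uniform-in-$n$ bound on $\sup_{t\le T}$ of the total-variation mass of $u^n_t$. Your proposed workarounds do not deliver this: the bounds from Lemma \ref{le:uniformbounded} are of the form $\sup_n\mathbb{E}[\sup_{t\le T}|\langle u^n_t,\psi\rangle|^p]<\infty$ for \emph{fixed} test functions $\psi$, and for a signed density no collection of such pairings controls $\|u^n_t\|_1$ (cancellation can make $\langle u^n_t,\psi\rangle$ small while $\|u^n_t\|_1$ is large). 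Approximating $\mathbf{1}$ by $\psi_m$ does not help for the same reason. Your fallback---deduce mass control from the $L^p_{loc}$ bound---only gives $\sup_n\mathbb{E}\int_0^T\|u^n_t\|_p^p\,dt<\infty$, which is an \emph{integrated} control and says nothing about $\sup_{t\le T}\|u^n_t\|_1$; indeed, a genuine bound on $\mathbb{E}[\sup_{t\le T}\|u^n_t\|_p^p]$ is exactly Lemma \ref{lem:uniformbound}, which needs $p<5/3$ and is \emph{not} available here.

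The paper avoids this obstacle entirely by applying Lemma \ref{lem:tightcriterion0} with $E=\mathbb{R}$, not $E=\mathbb{M}([0,L])$: for each admissible $\psi$ one shows $(\langle u^n,\psi\rangle)_{n\ge1}$ is tight in $D([0,\infty),\mathbb{R})$, where compact containment is immediate from Lemma \ref{le:uniformbounded} and Markov's inequality. One then invokes \emph{Mitoma's theorem} to lift tightness of all the real-valued projections to tightness in $D([0,\infty),\mathbb{M}([0,L]))$. This is the missing ingredient in your plan. For the $L^p_{loc}$ statement the paper does nothing elaborate: the uniform moment bound (\ref{eq:Approximomentresult}) plus Tonelli gives $\sup_n\mathbb{E}[\|u^n\|_{p,T}^p]<\infty$, Markov's inequality gives tightness of $\|u^n\|_{p,T}$, and (\ref{eq:tightresult}) follows for any limit point by Fatou. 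No factorization or fractional-Sobolev embedding is needed for this proposition.
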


\begin{proof}
We first prove that the sequence $(\langle u^n,\psi\rangle)_{n\geq1}$
is tight in $D([0,\infty),\mathbb{R})$ by using Lemma  \ref{lem:tightcriterion0}.
It is easy to see that the condition (i) in Lemma \ref{lem:tightcriterion0}
can be verified by Lemma \ref{le:uniformbounded}. 
In the following we mainly
verify the condition (ii) in 
Lemma \ref{lem:tightcriterion0}.

For each $f\in C_b^2(\mathbb{R})$ ($f,f^{'},f^{''}$ are bounded and uniformly continuous)
with compact supports, it holds
by (\ref{eq:approxivariationform}) and 
It\^{o}'s formula (see, e.g., 
\cite[Theorem 4.4.7]{Applebaum:2009}) that
\begin{align}
\label{eq:ito}
f(\langle u_t^n,\psi\rangle)&=f(\langle u_0^n,\psi\rangle)
+\int_0^tf^{'}(\langle u_s^n,\psi\rangle)\langle u_s^n,\psi^{''}\rangle) ds
\nonumber\\
&\quad+\int_0^t\int_0^L\int_{\mathbb{R}\setminus\{0\}}
\mathcal{D}(\langle u_s^n,\psi\rangle,\varphi^n(u^n(s,x))\psi(x)z)
dsdx\nu_{\alpha}(dz)
\nonumber\\
&\quad+\int_0^t\int_0^L\int_{\mathbb{R}\setminus\{0\}}[
f(\langle u_s^n,\psi\rangle+\varphi^n(u^n(s,x))\psi(x)z)-f(\langle u_s^n,\psi\rangle)]\tilde{N}(ds,dx,dz),
\end{align}
where 
$
\mathcal{D}(u,v)=f(u+v)-f(u)-vf^{'}(u)
$
for $u,v\in\mathbb{R}$ and the last term is 
a martingale.

Since $f,f^{'},f^{''}$ are bounded
and $\psi\in \mathcal{S}([0,L])$, then 
\begin{align}
\label{eq:estimate1}
\vert f^{'}(\langle u_s^n,\psi\rangle)\langle u_s^n,\psi^{''}\rangle\vert 
\leq C\left\vert \int_0^Lu^n({s},x)\psi(x)dx\right\vert .
\end{align}
By Taylor's formula, one can show that 
$\vert \mathcal{D}(u,v)\vert \leq C(\vert v\vert\wedge \vert v\vert ^2)$,
which also implies that $\vert \mathcal{D}(u,v)\vert \leq C(\vert v\vert \wedge \vert v\vert ^p)$ 
for $p\in(\alpha,2]$. Thus we have for $p\in(\alpha,2]$,
\begin{align}
\label{eq:estimate2}
&\int_0^L\int_{\mathbb{R}\setminus\{0\}}
\vert \mathcal{D}(\langle u_s^n,\psi\rangle,\varphi^n(u^n(s,x))\psi(x)z)\vert 
dx\nu_{\alpha}(dz)
\nonumber\\
&\leq C\left(\int_{\mathbb{R}\setminus\{0\}}\vert z\vert \wedge \vert z\vert ^p\nu_{\alpha}(dz)\right)
\int_0^L(\vert \varphi^n(u^n(s,x))\psi(x)\vert +\vert \varphi^n(u^n(s,x))\psi(x)\vert ^p)dx
\nonumber\\
&\leq C_{p,K,\alpha}\int_0^L(\vert \varphi^n(u^n(s,x))\psi(x)\vert +\vert \varphi^n(u^n(s,x))\psi(x)\vert ^p)dx,
\end{align}
where by (\ref{eq:smalljumpsizemeasure}),
\begin{align*}
\int_{\mathbb{R}\setminus\{0\}}\vert z\vert \wedge \vert z\vert ^p\nu_{\alpha}(dz)
&=c_{+}\int_0^1z^{p-\alpha-1}dz
+c_{-}\int_{-1}^0(-z)^{p-\alpha-1}dz
+c_{+}\int_1^Kz^{-\alpha}dz
\\
&\quad
+c_{-}\int_{-K}^{-1}(-z)^{-\alpha}dz
\\
&=\frac{1}{p-\alpha}+\dfrac{1-K^{1-\alpha}}{\alpha-1}\leq C_{p,K,\alpha}.
\end{align*}

For given $n\geq1$ let us define
\begin{align*}
g_n(s)&:=f^{'}(\langle u_s^n,\psi\rangle)\langle u_s^n,\psi^{''}\rangle)
+\int_0^L\int_{\mathbb{R}\setminus\{0\}}
\mathcal{D}(\langle u_s^n,\psi\rangle,\varphi^n(u^n(s,x))\psi(x)z)
dx\nu_{\alpha}(dz).
\end{align*}
By (\ref{eq:ito}), it is easy to see that 
\begin{align*}
f(\langle u_t^n,\psi\rangle)-\int_0^{t}g_n(s)ds
\end{align*}
is an $(\mathcal{F}_t)$-martingale. 

Now we verify the moment estimates 
(\ref{eq:tight-moment})
and (\ref{eq:tight-moment2}) of the condition (ii) in 
Lemma \ref{lem:tightcriterion0}.
For each $t\in[0,T]$, it holds by the boundedness of $f$, estimates
 (\ref{eq:estimate1})-(\ref{eq:estimate2}) and (\ref{eq:glo-lin-growth}) that
\begin{align*}
\mathbb{E}\left[\vert f(\langle u_t^n,\psi\rangle)\vert +\vert g_n(t)\vert \right]
&\leq C\left(1+\mathbb{E}\left[\left\vert \int_0^Lu^n(t,x)\psi(x)dx\right\vert \right]\right)
\\
&\quad
+C_{p,K,\alpha}\mathbb{E}\left[\int_0^L(\vert \psi(x)\vert +\vert u^n(t,x)\psi(x)\vert )dx\right]
\\
&\quad+C_{p,K,\alpha}\mathbb{E}\left[\int_0^L(\vert \psi(x)\vert ^p+
\vert u^n(t,x)\psi(x)\vert ^p)dx\right].
\end{align*}
Since $\psi\in \mathcal{S}([0,L])$ implies that $\psi$ is bounded, 
then it holds by H\"{o}lder's inequality and 
(\ref{eq:p-uniform}) that for $p\in(\alpha,2]$
\begin{align*}
\mathbb{E}\left[\vert f(\langle u_t^n,\psi\rangle)\vert +\vert g_n(t)\vert \right]
&\leq C_{p,K,\alpha}\left(1+\left(\sup_{0\leq t\leq T}
\mathbb{E}[\vert \vert u^n_t\vert \vert_p^p]\right)^{\frac{1}{p}}
+\sup_{0\leq t\leq T}
\mathbb{E}[\vert \vert u^n_t\vert \vert_p^p]\right),
\end{align*}
and so by (\ref{eq:Approximomentresult}),
\begin{align*}
\sup_{0\leq t\leq T}\mathbb{E}\left[\vert f(\langle u_t^n,\psi\rangle)\vert +\vert g_n(t)\vert \right]<\infty,
\end{align*}
which verifies the estimate (\ref{eq:tight-moment}).

To verify (\ref{eq:tight-moment2}), it suffices to show that for each $n\geq1$
\begin{align*}
\mathbb{E}\left[\int_0^T\vert g_n(t)\vert ^qdt\right]<\infty
\end{align*}
for some $q>1$.
By the estimates (\ref{eq:estimate1})-(\ref{eq:estimate2}) and (\ref{eq:glo-lin-growth}), we have
\begin{align*}
\mathbb{E}\left[\int_0^T\vert g_n(t)\vert ^qdt\right]
&\leq C_{q}\mathbb{E}\left[\int_0^T\left\vert \int_0^Lu^n(t,x)\psi(x)dx
\right\vert ^qdt\right]
\\
&\quad+C_{p,q,K,\alpha}\mathbb{E}\left[\int_0^T\left\vert \int_0^L(\vert \psi(x)\vert +
\vert u^n(t,x)\psi(x)\vert )dx
\right\vert ^qdt\right]
\\
&\quad+C_{p,q,K,\alpha}\mathbb{E}\left[\int_0^T\left\vert \int_0^L(\vert \psi(x)\vert ^p+
\vert u^n(t,x)\psi(x)\vert ^p)dx
\right\vert ^qdt\right].
\end{align*}
Taking $1<q<2/p$, the H\"{o}lder inequality 
and boundedness of 
$\psi$ imply that
\begin{align*}
\mathbb{E}\left[\int_0^T\vert g_n(t)\vert ^qdt\right]
&\leq C_{p,q,K,\alpha,T}\left(1+\sup_{0\leq t\leq T}\mathbb{E}[\vert \vert u^n_t\vert\vert_q^q]
+\sup_{0\leq t\leq T}\mathbb{E}
\left[\vert \vert u^n_t\vert \vert_{pq}^{pq}\right]\right),
\end{align*}
and so by (\ref{eq:Approximomentresult}),
\begin{align*}
\sup_{n\geq1}\mathbb{E}\left[\int_0^T\vert g_n(t)\vert ^qdt\right]<\infty,
\end{align*}
which verifies the estimate (\ref{eq:tight-moment2}).

Therefore, for each $\psi\in \mathcal{S}([0,L])$ with $\psi(0)=\psi(L)=\psi^{'}(0)=\psi^{'}(L)=0$,
the sequence $(\langle u^n,\psi\rangle)_{n\geq1}$
is tight in $D([0,\infty),\mathbb{R})$, and so 
it holds by Mitoma's theorem 
(see, e.g., Mitoma \cite{Mitoma:1983} and Walsh \cite[pp.361--365]{Walsh:1986}) that
$(u^n)_{n\geq1}$
is tight in $D([0,\infty),\mathbb{M}([0,L]))$.

On the other hand, by (\ref{eq:Approximomentresult}) 
we have for each $T>0$
\begin{align*}
\sup_{n\geq1}\mathbb{E}
\left[\int_0^T\int_0^L\vert u^n(t,x)\vert ^pdxdt\right]
\leq C_T\sup_{n\geq1}\sup_{0\leq t\leq T}
\mathbb{E}[\vert \vert u^n_t\vert \vert _p^p]<\infty
\end{align*}
for $p\in(\alpha,2]$.
The Markov's inequality implies that 
for each $\varepsilon>0,T>0$
there exists a constant $C_{\varepsilon,T}$ such that
\begin{align*}
\sup_{n\geq1}\mathbb{P}\left[\int_0^T\int_0^L\vert u^n(t,x)\vert 
^pdxdt>C_{\epsilon,T}\right]<\varepsilon
\end{align*}
for $p\in(\alpha,2]$.
Therefore, the sequence $(u^n)_{n\geq1}$ is also tight
in $L^p_{loc}([0,\infty)\times[0,L])$ for $p\in(\alpha,2]$, and the conclusion 
(\ref{eq:tightresult}) holds.
\qed
\end{proof}

\begin{Tproof}\textbf{~of Theorem
\ref{th:mainresult}.}
We are going to prove 
Theorem \ref{th:mainresult} by applying  weak 
convergence arguments. For each
$n\geq1$, let $u^n$ be the  
strong solution
 of equation (\ref{eq:approximatingsolution}) given by Proposition \ref{th:Approximainresult}. 
It can also be regarded as an element in 
 $D([0,\infty),\mathbb{M}([0,L]))$
with the Skorokhod topology.
 By Proposition \ref{th:tightnessresult}, there exists 
 a $D([0,\infty),\mathbb{M}([0,L]))\cap L_{loc}^p([0,\infty)\times [0,L])$-valued random variable $u$ such that 
$u^n$ converges to $u$ in distribution in 
 $D([0,\infty),\mathbb{M}([0,L]))\cap L_{loc}^p([0,\infty)\times [0,L])$ for $p\in(\alpha,2]$.
On the other hand, the Skorokhod Representation Theorem
(see, e.g., Either and Kurtz \cite[Theorem 3.1.8]{Ethier:1986}) 
yields that
there exists another filtered probability space
$(\hat{\Omega}, \hat{\mathcal{F}}, (\hat{\mathcal{F}}_t)_{t\geq0},\hat{\mathbb{P}})$ 
and on it a further subsequence 
 $(\hat{u}^n)_{n\geq1}$ and
$\hat{u}$ 
which have the same distribution as 
$(u^n)_{n\geq1}$ and $u$,  
so that
$\hat{u}^n$ almost surely converges to 
$\hat{u}$ in $D([0,\infty),\mathbb{M}([0,L]))\cap L_{loc}^p([0,\infty)\times [0,L])$ for $p\in(\alpha,2]$.

For each $t\geq0,n\geq1$ and any test function 
$\psi\in \mathcal{S}([0,L])$ with 
$\psi(0)=\psi(L)=0$ and
$\psi^{'}(0)=
\psi^{'}(L)=0$, 
let us define 
\begin{align*}
\hat{M}^n_{t}(\psi)&:=\int_0^L\hat{u}^n(t,x)
\psi(x)dx-\int_0^L\hat{u}_0(x)
\psi(x)dx
-\frac{1}{2}\int_0^{t}
\int_0^L\hat{u}^n(s,x)\psi^{''}(x)dxds.
\end{align*}
Since 
$\hat{u}^n$ almost surely converges to 
$\hat{u}$ in the Skorokhod topology
as $n\rightarrow\infty$, then 
\begin{align}
\label{eq:M^n_t}
\hat{M}^n_{t}(\psi)&
\overset{\mathbf{\hat{P}}\text{-a.s.}}{\longrightarrow}
\int_0^L\hat{u}(t,x)\psi(x)dx-
\int_0^L\hat{u}_0(x)\psi(x)dx
-\frac{1}{2}\int_0^{t}
\int_0^L\hat{u}(s,x)
\psi^{''}(x)
dxds
\end{align}
in the Skorokhod topology  as $n\rightarrow\infty$.

By (\ref{eq:approxivariationform}) and the fact that
$\hat{u}^n$ has the same distribution as 
$u^n$ for each $n\geq1$, we have
\begin{align*}
\hat{M}^n_{t}(\psi)
\overset{D}=&
\int_0^Lu^n(t,x)
\psi(x)dx-\int_0^Lu_0(x)\psi(x)dx
-\frac{1}{2}\int_0^{t}\int_0^L
u^n(s,x)\psi^{''}(x)dxds
\nonumber\\
=&\int_0^{t+}
\int_0^L\int_{\mathbb{R}\setminus\{0\}}\psi(x)
\varphi^n(u^n(s-,x))z\tilde{N}(ds,dx,dz),
\end{align*}
where $\overset{D}=$ denotes the identity 
in distribution.  
The Burkholder-Davis-Gundy inequality,
(\ref{eq:element-inequ})-(\ref{eq:jumpestimate}) 
and (\ref{eq:glo-lin-growth}) imply that
for $p\in(\alpha,2]$ 
\begin{align*}
\hat{\mathbb{E}}[\vert \hat{M}^n_{t}(\psi)\vert ^p]
&=\mathbb{E}
\left[\left\vert \int_0^{t+}
\int_0^L\int_{\mathbb{R}\setminus\{0\}}\psi(x)
\varphi^n(u^n(s-,x))z\tilde{N}(ds,dx,dz)
\right\vert ^p\right]
\\
&\leq C_p\mathbb{E}
\left[\int_0^t\int_0^L
\int_{\mathbb{R}\setminus\{0\}}
\vert \psi(x)\vert ^p(1+\vert u^n(s,x)\vert )^p
\vert z\vert ^pdsdx\nu_{\alpha}(dz)
\right]
\\
&\leq C_{p,K,\alpha,T}
\left(\int_0^L\vert \psi(x)\vert ^pdx+\bigg\vert \sup_{x\in[0,L]}\psi(x)\bigg\vert ^p
\sup_{0\leq t\leq T}\mathbb{E}\left[\vert \vert u^n_t\vert \vert _p^p\right]\right).
\end{align*}
Then by $\psi\in \mathcal{S}([0,L])$ and (\ref{eq:Approximomentresult}), 
we have for each $T>0$ 
\begin{align*}
\sup_{n\geq1}\sup_{0\leq t\leq T}
\hat{\mathbb{E}}[\vert \hat{M}^n_{t}(\psi)\vert ^p]<\infty.
\end{align*}
Therefore, it holds by (\ref{eq:M^n_t}) that 
there exists an 
$(\hat{\mathcal{F}}_t)$-martingale 
$\hat{M}_{t}(\psi)$ such that $\hat{M}^n_{t}(\psi)$ 
converges weakly to
$\hat{M}_{t}(\psi)$
as $n\rightarrow\infty$, and for each $t\geq0$
\begin{align}
\label{martingle1}
\hat{M}_{t}(\psi)&=
\int_0^L\hat{u}(t,x)\psi(x)dx-
\int_0^L
\hat{u}_0(x)\psi(x)dx-\frac{1}{2}\int_0^{t}
\int_0^L\hat{u}(s,x)
\psi^{''}(x)dxds.
\end{align}

By Hypothesis \ref{Hypo} and 
Lemma \ref{le:approxi-varphi},
the quadratic variation of
$\{\hat{M}^n_{t}(\psi),t\in[0,\infty)\}$ satisfies that
\begin{align*}
\langle \hat{M}^n(\psi), \hat{M}^n(\psi)
\rangle_t&=\int_0^{t}\int_0^L
\int_{\mathbb{R}\setminus\{0\}}\varphi^n({u}^n(s,x))^2
\psi(x)^2z^2dsdx\nu_{\alpha}(dz)
\\
&\overset{D}=\int_0^{t}\int_0^L
\int_{\mathbb{R}\setminus\{0\}}\varphi^n(\hat{u}^n(s,x))^2
\psi(x)^2z^2dsdx\nu_{\alpha}(dz)
\\
&\overset{\mathbb{P}-a.s.}
\rightarrow\int_0^{t}\int_0^L
\int_{\mathbb{R}\setminus\{0\}}\varphi(\hat{u}(s,x))^2
\psi(x)^2z^2dsdx\nu_{\alpha}(dz),\,\,t\in[0,T],
\end{align*}
 as $n\rightarrow\infty$.
We define by
$\{\langle \hat{M}(\psi),\hat{M}(\psi)\rangle_t,t\in[0,\infty)\}$
the quadratic variation process 
\begin{align*}
\langle \hat{M}(\psi),\hat{M}(\psi)\rangle_t:=\int_0^{t}
\int_0^L\int_{\mathbb{R}\setminus\{0\}}\varphi(\hat{u}(s,x))^2
\psi(x)^2z^2dsdx\nu_{\alpha}(dz),\,\,t\geq0.
\end{align*}
As in 
Konno and Shiga \cite[Lemma 2.4]{Konno:1988}
or Mytnik \cite[Lemma 5.7]{Mytnik:2002},
$\langle \hat{M}(\psi),\hat{M}(\psi)\rangle_t$
corresponds to an orthonormal 
martingale measure $\hat{M}(dt,dx,dz)$ 
defined on the filtered probability space
$(\hat{\Omega}, \hat{\mathcal{F}}, 
(\hat{\mathcal{F}_t})_{t\geq0}, 
\hat{\mathbb{P}})$
in the sense of Walsh
\cite[Chapter 2]{Walsh:1986} whose 
quadratic measure is given by
\begin{align*}
\varphi(\hat{u}(t,x))^2z^2dtdx\nu_{\alpha}(dz).
\end{align*}
Let $\{\dot{\bar{L}}_{\alpha}(t,x):t\in[0,\infty),x\in[0,L]\}$ be another
truncated $\alpha$-stable white noise, 
defined possibly on
$(\hat{\Omega}, \hat{\mathcal{F}}, 
(\hat{\mathcal{F}_t})_{t\geq0}, 
\hat{\mathbb{P}})$,
independent of 
$\hat{M}(dt,dx,dz)$ and define 
\begin{align*}
\hat{L}_{\alpha}(t,\psi)&:=
\int_0^{t+}\int_{0}^L\int_{\mathbb{R}\setminus\{0\}}
\dfrac{1}{\varphi(\hat{u}(s-,x))}
1_{\{\varphi(\hat{u}(s-,x))\neq0\}}\psi(x)z\hat{M}(ds,dx,dz)
\\
&\quad+\int_0^{t+}\int_{0}^L
\psi(x)
1_{\{\varphi(\hat{u}(s-,x))=0\}}
\bar{L}_{\alpha}(ds,dx).
\end{align*}
Then 
$\{\hat{L}_{\alpha}(t,\psi): \,t\in[0,\infty),\, \psi\in \mathcal{S}([0,L]), \psi(0)=\psi(L)=0, \psi^{'}(0)=\psi^{'}(L)=0\}$
determines a truncated
$\alpha$-stable white noise 
$\dot{\hat{L}}_{\alpha}(t,x)$ on 
$(\hat{\Omega}, \hat{\mathcal{F}}, 
(\hat{\mathcal{F}_t})_{t\geq0}, 
\hat{\mathbb{P}})$ with the same distribution as 
$\dot{L}_{\alpha}(t,x)$
such that
\begin{align*}
\hat{M}_t(\psi)&=\int_0^{t+}
\int_0^L\varphi(\hat{u}(s-,x))
\psi(x)
\hat{L}_{\alpha}(ds,dx)
=\int_0^{t+}
\int_0^L\int_{\mathbb{R}\setminus\{0\}}\varphi(\hat{u}(s-,x))
\psi(x)z\widetilde{\hat{N}}(ds,dx,dz),
\end{align*}
where $\widetilde{\hat{N}}(dt,dx,dz)$ denotes
the compensated Poisson random measure
associated to the truncated $\alpha$-stable martingale
measure $\hat{L}_{\alpha}(t,x)$.
Hence, it holds by (\ref{martingle1}) that
$(\hat{u},\hat{L}_{\alpha})$
is a weak solution in probabilistic sense to 
equation
(\ref{eq:originalequation1}) defined on
$(\hat{\Omega}, \hat{\mathcal{F}}, 
(\hat{\mathcal{F}_t})_{t\geq0}, 
\hat{\mathbb{P}})$.

On the other hand,
since $\hat{u}^n$ has the same distribution as 
$u^n$ for each $n\geq1$, then the moment estimates (\ref{eq:Approximomentresult}) in Proposition \ref{th:Approximainresult} can be replaced by 
\begin{equation*}
\sup_{n\geq1}\sup_{0\leq t\leq T}
\hat{\mathbb{E}}\left[\vert \vert \hat{u}^n_t\vert \vert _p^p\right]<\infty
\end{equation*}
for $p\in(\alpha,2]$.
For moment estimate
(\ref{eq:momentresult}), the Fatou's Lemma 
implies that for $p\in(\alpha,2]$
\begin{align*}
\hat{\mathbb{E}}\left[\vert \vert \hat{u}\vert \vert _{p,T}^p\right]&=
\hat{\mathbb{E}}\left[\int_0^T\vert \vert 
\hat{u}_t\vert \vert _p^pdt\right]
\leq\liminf_{n\rightarrow\infty}C_T
\sup_{0\leq t\leq T}\hat{\mathbb{E}}\left[\vert \vert \hat{u}^n_t\vert \vert _p^p\right]<\infty,
\end{align*}
which completes the proof.
\qed
\end{Tproof}

\section{Proof of Theorem \ref{th:mainresult2}}\label{sec4}
The  proof of 
Theorem \ref{th:mainresult2} is
similar to that of Theorem \ref{th:mainresult}.
The main difference between them is that in 
the current proof
we need to prove the solution sequence 
$(u^n)_{n\geq1}$
to equation (\ref{eq:approximatingsolution}), obtained from 
Proposition \ref{th:Approximainresult}, is tight
in $D([0,\infty),L^p([0,L]))$ for $p\in(\alpha,5/3)$.
To this end, 
we need the following tightness criteria; 
see, e.g., Ethier and Kurtz \cite[Theorem 3.8.6 and Remark (a)]
{Ethier:1986}. 
Note that the same criteria was
also applied in Sturm \cite {Sturm:2003} with
Gaussian colored noise setting.

\begin{lemma}
\label{lem:tightcriterion}
Given  a complete and separable metric 
space $(E,\rho)$, 
 let $(X^n)$ be a sequence of stochastic processes 
with sample paths in $D([0,\infty),E)$.
The sequence is tight in $D([0,\infty),E)$ if the 
following conditions hold:
\begin{itemize}
\item[\rm (i)] For every $\varepsilon>0$ and 
rational $t\in[0,T]$, there exists a compact set $
\Gamma_{\varepsilon,T}\subset E$ such that
\begin{equation}
\label{eq:tightcriterion1}
\inf_{n}\mathbb{P}[X^n(t)\in\Gamma_{\varepsilon,T}]
\geq1-\varepsilon.
\end{equation}
\item[\rm (ii)] 
There exists  $p>0$ such that
\begin{equation}
\label{eq:tightcriterion2}
\lim_{\delta\rightarrow0}\sup_n\mathbb{E}
\left[\sup_{0\leq t\leq T}\sup_{0\leq u\leq 
\delta}(\rho(X^n_{t+u},X^n_t)\wedge1)^p\right]=0.
\end{equation}
\end{itemize}
\end{lemma}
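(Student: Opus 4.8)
The plan is to deduce the lemma from the classical tightness criterion for $D([0,\infty),E)$-valued processes of Ethier and Kurtz \cite[Theorem 3.8.6 and Remark (a)]{Ethier:1986}, so that the whole argument reduces to matching hypotheses (i) and (ii) with the two conditions of that theorem and then invoking Prokhorov's theorem. Since $(E,\rho)$ is complete and separable, the Skorokhod space $D([0,\infty),E)$ is itself Polish, hence by Prokhorov's theorem it suffices to show that $(X^n)_{n\geq1}$ is relatively compact in distribution. By \cite[Theorem 3.8.6]{Ethier:1986}, this holds provided: (a) for every $\eta>0$ and every rational $t\geq0$ there is a compact $\Gamma\subset E$ with $\inf_{n}\mathbb{P}[X^n(t)\in\Gamma]\geq1-\eta$; and (b) for every $\eta>0$ and $T>0$, $\lim_{\delta\to0}\sup_{n}\mathbb{P}[w'(X^n,\delta,T)\geq\eta]=0$, where $w'(\cdot,\delta,T)$ is the Skorokhod modulus of continuity. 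Condition (a) is immediate: given $\eta>0$ and a rational $t\geq0$, apply hypothesis (i) with $\varepsilon=\eta$ and with any integer $T\geq t$ to obtain a compact set that works simultaneously for all rational times in $[0,T]$, in particular at $t$.

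The substance is therefore to derive (b) from hypothesis (ii). First I would record the elementary comparison between $w'$ and the \emph{uniform} increment modulus: for every $x\in D([0,\infty),E)$, $\delta>0$ and $T>0$,
\[
w'(x,\delta,T)\ \leq\ 2\,w_T(x,2\delta),\qquad w_T(x,\eta):=\sup_{0\leq t\leq T}\ \sup_{0\leq u\leq\eta}\rho\bigl(x(t+u),x(t)\bigr),
\]
which is obtained by plugging the equally spaced partition $t_i=2i\delta$ (of mesh $2\delta>\delta$, hence admissible in the definition of $w'$) into that definition and applying the triangle inequality on each block $[t_{i-1},t_i)$. Using that the supremum commutes with both $(\cdot)\wedge1$ and $(\cdot)^p$, this gives
\[
\bigl(w'(X^n,\delta,T)\wedge1\bigr)^{p}\ \leq\ 2^{p}\,\sup_{0\leq t\leq T}\ \sup_{0\leq u\leq 2\delta}\bigl(\rho(X^n_{t+u},X^n_t)\wedge1\bigr)^{p},
\]
so hypothesis (ii) forces $\lim_{\delta\to0}\sup_{n}\mathbb{E}\bigl[(w'(X^n,\delta,T)\wedge1)^{p}\bigr]=0$, and Markov's inequality $\mathbb{P}[w'(X^n,\delta,T)\geq\eta]\leq\eta^{-p}\mathbb{E}[(w'(X^n,\delta,T)\wedge1)^{p}]$ (for $\eta\in(0,1)$) upgrades this to condition (b); this moment form of (b) is exactly what Remark (a) following \cite[Theorem 3.8.6]{Ethier:1986} permits.

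Once (a) and (b) are in place, \cite[Theorem 3.8.6]{Ethier:1986} yields relative compactness of $(X^n)_{n\geq1}$ and Prokhorov's theorem gives tightness in $D([0,\infty),E)$, completing the argument. The only points requiring (routine) care are the deterministic bookkeeping with admissible Skorokhod partitions in the modulus comparison, and the measurability of the uncountable suprema in hypothesis (ii) --- standard for c\`{a}dl\`{a}g paths, since each such supremum is attained along a countable dense set of times. I do not expect a genuine obstacle here: the lemma is essentially a repackaging of the Ethier--Kurtz criterion in which the awkward modulus $w'$ is replaced by the more tractable uniform increment bound of (ii), at the modest price of assuming slightly more than is strictly necessary.
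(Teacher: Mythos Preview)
Your proposal is correct and matches the paper's approach: the paper does not give an independent proof of this lemma but simply cites Ethier and Kurtz \cite[Theorem 3.8.6 and Remark (a)]{Ethier:1986} (noting the same criterion appears in Sturm \cite{Sturm:2003}), and you have filled in precisely the routine details of how hypotheses (i) and (ii) match the conditions of that theorem. There is nothing to add.
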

To verify condition (i) of Lemma 
\ref{lem:tightcriterion}, we need the following
characterization of the relatively compact set 
in $L^p({[0,L]}),p\geq1$; see, e.g., Sturm \cite[Lemma 4.3]{Sturm:2003}.
\begin{lemma}
\label{lem:compactcriterion}
A subset $\Gamma\subset L^p({[0,L]})$ for  $p\geq1$ 
is relatively compact if and only if 
the following conditions hold:
\begin{itemize}
\item[\rm (a)] $\sup_{f\in\Gamma}
\int_0^L\vert f(x)\vert ^pdx<\infty$,
\item[\rm (b)] $\lim_{y\rightarrow0}\int_0^L\vert 
f(x+y)-f(x)\vert ^pdx=0$ uniformly for all 
$f\in\Gamma$,
\item[\rm (c)] $\lim_{\gamma\rightarrow\infty}
\int_{(L-\frac{L}{\gamma},L]}\vert f(x)\vert ^pdx=0$ 
for all $f\in\Gamma$. 
\end{itemize}
\end{lemma}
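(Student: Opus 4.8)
The plan is to recognize Lemma~\ref{lem:compactcriterion} as the Riesz--Kolmogorov--Fr\'{e}chet compactness criterion specialized to the bounded interval $[0,L]$, and to establish it by the classical mollification argument. Throughout one identifies each $f\in L^p([0,L])$ with its extension to $\mathbb{R}$ obtained by setting $f\equiv 0$ outside $[0,L]$, which is natural here in view of the Dirichlet boundary conditions in~(\ref{eq:originalequation1}). Since $L^p([0,L])$ is a complete metric space, relative compactness of $\Gamma$ is equivalent to total boundedness, and I would prove the two implications separately.

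For the implication ``(a)--(c) $\Rightarrow$ $\Gamma$ relatively compact'', fix a standard mollifier $\rho_\varepsilon(x)=\varepsilon^{-1}\rho(x/\varepsilon)$ with $\rho\ge 0$, $\int_{\mathbb{R}}\rho=1$ and $\mathrm{supp}\,\rho\subset[-1,1]$, and set $f_\varepsilon:=f\ast\rho_\varepsilon$. The argument proceeds in three steps. First, by Minkowski's integral inequality $\|f_\varepsilon-f\|_{L^p(\mathbb{R})}\le\sup_{|y|\le\varepsilon}\|f(\cdot+y)-f(\cdot)\|_{L^p(\mathbb{R})}$; splitting the latter integral into the part over $[0,L]$ and the two ``collars'' of width $\varepsilon$ adjacent to the endpoints $0$ and $L$, conditions~(b) and~(c) together with the zero extension give $\sup_{f\in\Gamma}\|f_\varepsilon-f\|_{L^p([0,L])}\le\omega(\varepsilon)$ with $\omega(\varepsilon)\to 0$ as $\varepsilon\downarrow 0$. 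Second, for each fixed $\varepsilon$, H\"{o}lder's inequality and condition~(a) yield $\sup_{f\in\Gamma}\|f_\varepsilon\|_{L^\infty}\le\|\rho_\varepsilon\|_{L^{p/(p-1)}}\sup_{f\in\Gamma}\|f\|_p<\infty$ and, by uniform continuity of $\rho_\varepsilon$, the equicontinuity estimate $|f_\varepsilon(x)-f_\varepsilon(x')|\le\|\rho_\varepsilon(x-\cdot)-\rho_\varepsilon(x'-\cdot)\|_{L^{p/(p-1)}}\|f\|_p$; hence $\{f_\varepsilon|_{[0,L]}:f\in\Gamma\}$ is uniformly bounded and equicontinuous, so by Arzel\`{a}--Ascoli it is relatively compact in $C([0,L])$ and therefore, the interval being bounded, in $L^p([0,L])$. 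Third, given $\eta>0$ one chooses $\varepsilon$ with $\omega(\varepsilon)<\eta/2$, covers the relatively compact set $\{f_\varepsilon|_{[0,L]}:f\in\Gamma\}$ by finitely many $L^p$-balls of radius $\eta/2$, and observes that the concentric balls of radius $\eta$ cover $\Gamma$; thus $\Gamma$ is totally bounded.

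For the converse, assume $\overline{\Gamma}$ is compact. Condition~(a) is immediate. Conditions~(b) and~(c) follow from the standard finite-net argument: the functionals $f\mapsto\|f(\cdot+y)-f(\cdot)\|_p$ and $f\mapsto(\int_{(L-L/\gamma,L]}|f(x)|^p\,dx)^{1/p}$ are $1$-Lipschitz with respect to the $L^p$-norm, each of them tends to $0$ for every fixed $f$ (as $y\to 0$, respectively $\gamma\to\infty$, by continuity of translation in $L^p$ and absolute continuity of the integral), and $\overline{\Gamma}$ is covered by finitely many balls $B(f_i,\delta)$; consequently the supremum over $f\in\Gamma$ of either functional is bounded by $\delta$ plus the maximum over the finite set $\{f_i\}$, which is as small as one likes.

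I expect the only genuinely delicate point to be the first step of the forward direction: passing between the translation modulus of continuity on $[0,L]$ supplied by hypothesis~(b) and the corresponding modulus on all of $\mathbb{R}$ for the zero-extended functions, where the $L^p$-mass of $f$ in the $\varepsilon$-collars of the boundary must be absorbed --- this is precisely what conditions~(a)--(c) are calibrated to provide. Everything else, namely the H\"{o}lder and Arzel\`{a}--Ascoli estimates in the second step and the Lipschitz and finite-net manipulations in the converse, is routine.
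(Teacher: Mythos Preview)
The paper does not provide a proof of Lemma~\ref{lem:compactcriterion}; it is stated as a known characterization and referred to Sturm \cite[Lemma~4.3]{Sturm:2003}. Your sketch is the classical Riesz--Kolmogorov--Fr\'{e}chet argument (mollification and Arzel\`{a}--Ascoli for sufficiency, a finite $\varepsilon$-net for necessity) and is correct.

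One incidental observation on your Step~1: under the zero-extension convention, condition~(b) with $y\to 0$ from both sides already forces the boundary-collar mass to vanish uniformly over $\Gamma$, since for $y>0$ one has $\int_{L-y}^{L}|f(x)|^p\,dx=\int_{L-y}^{L}|f(x+y)-f(x)|^p\,dx\le\int_0^L|f(x+y)-f(x)|^p\,dx$, and similarly at the left endpoint for $y<0$. So condition~(c), as stated only pointwise and only at the right endpoint, carries no additional information for the forward implication; your argument goes through using (a) and (b) alone.
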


The proof of the tightness of $(u^n)_{n\geq1}$
is accomplished
by verifying conditions (i) and (ii) in 
Lemma \ref{lem:tightcriterion}.
To this end, we need some  estimates on
$(u^n)_{n\geq1}$, that is, the uniform bound
estimate in Lemma 
\ref{lem:uniformbound},
the temporal difference estimate in Lemma 
\ref{lem:temporalestimation}
and the spatial difference estimate in Lemma 
\ref{lem:spatialestimation}, respectively.

\begin{lemma}
\label{lem:uniformbound} 
Suppose that $\alpha\in(1,5/3)$ and 
for each $n\geq1$ $u^n$ is the solution
to equation (\ref{eq:approximatingsolution}) 
given by 
{\rm Proposition \ref{th:Approximainresult}}.
Then for given $T>0$ 
there exists a constant 
$C_{p,K,\alpha,T}$ such that
\begin{equation}
\label{eq:unformlybounded}
\sup_n\mathbb{E}\left[\sup_{0\leq t\leq T}\vert \vert u^n_t\vert \vert _p^p\right]\leq C_{p,K,\alpha,T},\,\,\,
\text{for}\,\,\,p\in(\alpha,5/3).
\end{equation}
\end{lemma}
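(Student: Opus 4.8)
The plan is to separate the deterministic part of $u^n$ from the stochastic convolution and to control the latter by the factorization device (the method used by Sturm in the Gaussian colored-noise setting). Writing the mild form (\ref{mildformapproxi0}) as $u^n(t,\cdot)=\int_0^LG_t(\cdot,y)u_0(y)\,dy+v^n(t,\cdot)$, the first term causes no trouble: by (\ref{eq:Greenetimation0}) the heat semigroup is bounded on $L^p([0,L])$, so the $p$-th moment of its $\sup_{t\le T}\|\cdot\|_p$-norm is at most $C\,\mathbb{E}[\|u_0\|_p^p]<\infty$. For $v^n$ I would fix $\kappa\in(0,1)$ and, using the semigroup property (\ref{eq:Greenetimation1}), the identity $\int_s^t(t-r)^{\kappa-1}(r-s)^{-\kappa}\,dr=\pi/\sin(\pi\kappa)$ and a stochastic Fubini argument, rewrite
\[
v^n(t,x)=\frac{\sin\pi\kappa}{\pi}\int_0^t(t-r)^{\kappa-1}\int_0^L G_{t-r}(x,y)\,Y^n(r,y)\,dy\,dr,
\]
where $Y^n(r,y)=\int_0^{r+}\!\int_0^L\!\int_{\mathbb{R}\setminus\{0\}}(r-s)^{-\kappa}G_{r-s}(y,y')\varphi^n(u^n(s-,y'))\,z\,\tilde N(ds,dy',dz)$. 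The gain is that the whole $t$-dependence of $v^n$ is now carried by the deterministic kernel $(t-r)^{\kappa-1}G_{t-r}$, so one can take the supremum over $t$ without Doob's inequality, which is not applicable here since $v^n(t,\cdot)$ is not a martingale in $t$.

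I would then split the estimate into a deterministic and a probabilistic part. Deterministic part: from the displayed identity, Minkowski's integral inequality, the boundedness of $G_{t-r}$ on $L^p$ (again (\ref{eq:Greenetimation0})) and H\"older's inequality in $r$ give, for an admissible integrability index $q$ with $\kappa q>1$,
\[
\sup_{0\le t\le T}\|v^n_t\|_p\le C_{T,\kappa,q}\Big(\int_0^T\|Y^n_r\|_p^{q}\,dr\Big)^{1/q},
\]
so no supremum survives under the expectation. Probabilistic part: bound $\int_0^T\mathbb{E}\|Y^n_r\|_p^{q}\,dr$ by applying the Burkholder--Davis--Gundy inequality together with the elementary inequality (\ref{eq:element-inequ}), then Fubini, and using that the truncated noise has jumps bounded by $K$, so $\int_{\mathbb{R}\setminus\{0\}}|z|^{q}\nu_\alpha(dz)=K^{q-\alpha}/(q-\alpha)<\infty$ for every $q>\alpha$ (cf.\ (\ref{eq:jumpestimate})); combining this with the heat-kernel bound (\ref{eq:Greenetimation2}), the uniform linear growth (\ref{eq:glo-lin-growth}) of $\varphi^n$, and the uniform a priori moment bound (\ref{eq:Approximomentresult}) of Proposition \ref{th:Approximainresult} (and, if needed at the last step, a Gronwall inequality with singular kernel) produces a bound depending only on $p,K,\alpha,T$. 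As all constants are $n$-independent, combining the two parts and taking expectations gives (\ref{eq:unformlybounded}).

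The hard part is the arithmetic of exponents, and this is exactly where the hypothesis $\alpha\in(1,5/3)$, $p\in(\alpha,5/3)$ is forced. The factorization pulls $\kappa$ in opposite directions: it must be large enough that $\kappa q>1$ for the fractional-integral step to recover the time-supremum, yet small enough that the additional singular weight $(r-s)^{-\kappa}$ inside $Y^n$ remains integrable once the heat-kernel estimate (\ref{eq:Greenetimation2}) is applied; on top of that $q$ is constrained both by $q>\alpha$ (for the jump integrability) and by the orders of moments of $u^n$ available from Proposition \ref{th:Approximainresult}. In the regime $\alpha,p<5/3$ the heat-kernel singularity $(p-1)/2$ is small and the threshold $p>\alpha$ is mild, which is precisely what makes the window of admissible $(\kappa,q)$ nonempty; checking that all the exponents add up below the integrability thresholds, uniformly in $n$, is the crux of the argument.
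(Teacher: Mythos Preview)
Your strategy---factorize the stochastic convolution, bound the auxiliary process $Y^n$ via BDG and the heat-kernel estimate, then close using (\ref{eq:Approximomentresult}) or a Gronwall argument---matches the paper's exactly. The gap is the H\"older step in what you call the deterministic part. You split $(t-r)^{\kappa-1}$ away from $\|Y^n_r\|_p$ and impose the standard factorization condition $\kappa q>1$. But the probabilistic bound on $Y^n$, after BDG with exponent $q\in(\alpha,2]$ and the kernel estimate (\ref{eq:Greenetimation2}), forces $\kappa q+(q-1)/2<1$; together with $\kappa q>1$ this gives $(q-1)/2<0$, so no $q>1$ is admissible. Your window of exponents is therefore empty for every choice of $q>\alpha$, and the argument as written does not close---the ``arithmetic of exponents'' you flag as the crux in fact fails outright with this splitting.

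The paper applies H\"older the other way around: it pairs the constant function $1$ against the product $(t-s)^{\kappa-1}\|Y^n_s\|_p$, obtaining
\[
\Big(\int_0^t(t-s)^{\kappa-1}\|Y^n_s\|_p\,ds\Big)^p\le t^{p-1}\int_0^t(t-s)^{(\kappa-1)p}\|Y^n_s\|_p^p\,ds.
\]
This only requires $(\kappa-1)p>-1$, i.e.\ $\kappa>1-1/p$, which is strictly weaker than $\kappa>1/p$ for $p<2$. The combined constraints $1-1/p<\kappa<3/(2p)-1/2$ are then nonempty precisely when $p<5/3$, and this is exactly where the hypothesis $\alpha,p\in(1,5/3)$ comes from. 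The price is that the singular kernel $(t-s)^{(\kappa-1)p}$ remains inside the $ds$-integral, so one cannot simply absorb it into a constant; instead the paper bounds $\mathbb{E}\|Y^n_s\|_p^p$ by $C(1+\mathbb{E}[\sup_{r\le s}\|u^n_r\|_p^p])$ and closes with a generalized (singular-kernel) Gronwall inequality rather than appealing directly to (\ref{eq:Approximomentresult}).
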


\begin{proof}
For each $n\geq 1$, by (\ref{mildformapproxi0})
it is easy to see that 
$$\mathbb{E}\left[\sup_{0\leq t\leq T}\vert \vert u^n_t\vert \vert _p^p\right]\leq C_p(A_1+A_2),$$
where
\begin{align*}
A_1&=\mathbb{E}\left[\sup_{0\leq t\leq T}\Bigg\vert \Bigg\vert \int_0^LG_{t}(\cdot,y)u_0(y)dy\Bigg\vert \Bigg\vert _p^p\right],\\
A_2&=\mathbb{E}\left[\sup_{0\leq t\leq T}\Bigg\vert \Bigg\vert \int_0^{t+}\int_0^L\int_{\mathbb{R}\setminus\{0\}}
G_{t-s}(\cdot,y)\varphi^n(u^n(s-,y))z
\tilde{N}(ds,dy,dz)\Bigg\vert \Bigg\vert _p^p\right].
\end{align*}
We separately estimate $A_1$ and $A_2$ as follows. 
For $A_1$, it holds by Young's 
convolution inequality 
and (\ref{eq:Greenetimation0}) that
\begin{align*}
\label{eq:A_1}
A_1
&\leq C\mathbb{E}\left[\int_0^L
\sup_{0\leq t\leq T}\left(\int_0^L|G_{t}
(x,y)|dx\right)
\vert u_0(y)\vert ^pdy\right]\leq C_T\mathbb{E}[\vert\vert u_0\vert \vert _p^p].
\end{align*}
By Proposition \ref{th:Approximainresult},
we have $\mathbb{E}[\vert \vert u_0\vert \vert _p^p]<\infty$
for $p\in(\alpha,2]$, and so there exists a 
constant $C_{p,T}$ such that $A_1\leq C_{p,T}$.

For $A_2$, we use the factorization method; 
see, e.g., Da Prato et al. \cite{Prato:1987}, which is based on the fact that for $0<\beta<1$ and $\,0\leq s\leq t$,
\begin{equation*}
\int_s^t(t-r)^{\beta-1}(r-s)^{-\beta}dr=\dfrac{\pi}{\sin(\beta\pi)}.
\end{equation*}
For any function $v: [0,\infty)\times {[0,L]}
\rightarrow \mathbb{R}$ define
\begin{align*}
&\mathcal{J}^{\beta}v(t,x)
:=\dfrac{\sin(\beta\pi)}{\pi}\int_0^t\int_0^L(t-s)^{\beta-1}G_{t-s}(x,y)v(s,y)dyds,\\
&\mathcal{J}^n_{\beta}v(t,x):=\int_0^{t+}\int_0^L\int_{\mathbb{R}\setminus\{0\}}
(t-s)^{-\beta}G_{t-s}(x,y)\varphi^n(v(s-,y))z\tilde{N}(ds,dy,dz).
\end{align*}
By the stochastic Fubini Theorem and (\ref{eq:Greenetimation1}),  we have
\begin{align*}
\mathcal{J}^{\beta}\mathcal{J}^n_{\beta}u^n(t,x)
&=\dfrac{\sin(\beta\pi)}{\pi}\int_0^t\int_0^L
(t-s)^{\beta-1}G_{t-s}(x,y)\Bigg(\int_0^{s+}
\int_0^L\int_{\mathbb{R}\setminus\{0\}}
(s-r)^{-\beta}\\
&\quad\quad\times G_{s-r}(y,m)
\varphi^n(u^n(r-,m))z\tilde{N}(dr,dm,dz)\Bigg)dyds
\\
&=\dfrac{\sin(\beta\pi)}{\pi}\int_0^{t+}
\int_0^L\int_{\mathbb{R}\setminus\{0\}}
\Bigg[\int_r^{t}(t-s)^{\beta-1}(s-r)^{-\beta}\\
&\quad\quad\times\Bigg(\int_0^LG_{t-s}(x,y)
G_{s-r}(y,m)dy\Bigg)ds\Bigg]
\varphi^n(u^n(r-,m))z\tilde{N}(dr,dm,dz)
\\
&=\int_0^{t+}\int_0^L\int_{\mathbb{R}\setminus\{0\}}G_{t-s}(x,y)\varphi^n(u^n(s-,y))z\tilde{N}(ds,dy,dz).
\end{align*}
Thus, $$A_2=\mathbb{E}\left[\sup_{0\leq t\leq T}\vert \vert \mathcal{J}^{\beta}\mathcal{J}^n_{\beta}u^n_t\vert \vert _p^p\right].$$

Until the end of the proof we fix a
$0<\beta<1$ satisfying
\begin{align}
\label{eq:factorization}
1-\frac{1}{p}<\beta<\frac{3}{2p}-\frac{1}{2},
\end{align}
which requires that
$$\frac{3}{2p}-\frac{1}{2}-(1-\frac{1}{p})>0.$$
 Therefore, we need the assumption $p<5/3$ for this lemma.

Back to our  main proof, to estimate $A_2$
we first estimate
$\mathbb{E}[\vert \vert \mathcal{J}^n_{\beta}u^n_t\vert \vert _p^p]$.

For $p\in(\alpha,5/3)$, 
the Burkholder-Davis-Gundy inequality (for  fixed $t$ and varying $s$), (\ref{eq:element-inequ})-(\ref{eq:jumpestimate}) and 
(\ref{eq:glo-lin-growth}) imply that 
\begin{align*}
\mathbb{E}[\vert \vert \mathcal{J}^n_{\beta}u^n_t\vert \vert _p^p]
=&\int_0^L\mathbb{E}\left[\left\vert \int_0^{t+}\int_0^L\int_{\mathbb{R}\setminus\{0\}}
(t-s)^{-\beta}G_{t-s}(x,y)\varphi^n(u^n(s-,y))z\tilde{N}(ds,dy,dz)\right\vert ^p\right]dx\nonumber\\
\leq&C_p\int_0^L\int_0^{t}\int_0^L\int_{\mathbb{R}\setminus\{0\}}
\mathbb{E}\left[\vert (t-s)^{-\beta}G_{t-s}(x,y)\varphi^n(u^n(s,y))z\vert ^p\right]\nu_{\alpha}(dz)dydsdx\nonumber\\
\leq&C_{p,K,\alpha}\int_0^L\int_0^{t}\int_0^L
\mathbb{E}\left[1+\vert u^n(s,y)\vert ^p\right]\vert t-s\vert ^{-\beta p}\vert G_{t-s}(x,y)\vert ^pdydsdx.
\end{align*}

Combine (\ref{eq:Greenetimation2}), we have 
\begin{align*}
\mathbb{E}[\vert \vert \mathcal{J}^n_{\beta}u^n_t\vert \vert _p^p]\leq&C_{p,K,\alpha}\left(L+\mathbb{E}\left[\sup_{0\leq s\leq t}\vert \vert u^n_s\vert \vert _p^p\right]\right)
\int_0^Ts^{-(\frac{p-1}{2}+\beta p)}ds.
\end{align*}
For $p<5/3$, by (\ref{eq:factorization}) we have 
$$\int_0^Ts^{-(\frac{p-1}{2}+\beta p)}ds<\infty.$$ 
Therefore,
there exists a constant $C_{p,K,\alpha,T}$ such that
\begin{equation}
\label{eq:momentestimation}
\mathbb{E}\left[\vert \vert \mathcal{J}^n_{\beta}u^n_t\vert \vert _p^p\right]\leq
C_{p,K,\alpha,T}\left(1+\mathbb{E}\left[\sup_{0\leq s\leq t}\vert \vert u^n_s\vert \vert _p^p\right]\right).
\end{equation}

We now estimate $A_2=\mathbb{E}[\sup_{0\leq t\leq T}\vert \vert \mathcal{J}^{\beta}\mathcal{J}^n_{\beta}u^n_t\vert \vert _p^p]$. The Minkowski inequality implies that
\begin{align}
\label{eq:A_2_1}
A_2
=&\mathbb{E}\left[\sup_{0\leq t\leq T}\dfrac{\sin(\pi\beta)}{\pi}\Bigg\vert \Bigg\vert \int_0^{t}\int_0^L(t-s)^{\beta-1}
G_{t-s}(\cdot,y)\mathcal{J}^n_{\beta}u^n(s,y)dyds \Bigg\vert \Bigg\vert _p^p\right]\nonumber\\
\leq&\dfrac{\sin(\pi\beta)}{\pi}\mathbb{E}\left[\sup_{0\leq t\leq T}\left(\int_0^{t}(t-s)^{\beta-1}
\Bigg\vert \Bigg\vert \int_0^LG_{t-s}(\cdot,y)\mathcal{J}^n_{\beta}u^n(s,y)dy\Bigg\vert \Bigg\vert _pds\right)^p\right].
\end{align}
By the H\"{o}lder inequality and 
(\ref{eq:Greenetimation0}), we have
\begin{align}
\label{eq:A_2_2}
&\Bigg\vert \Bigg\vert \int_0^LG_{t-s}(\cdot-y)\mathcal{J}^n_{\beta}u^n(s,y)dy\Bigg\vert \Bigg\vert _p\nonumber\\
&\quad=\left(\int_0^L\left\vert \int_0^L\vert G_{t-s}(x,y)\vert ^{\frac{p-1}{p}}\vert G_{t-s}(x,y)\vert ^{\frac{1}{p}}\mathcal{J}^n_{\beta}u^n(s,y)
dy\right\vert ^pdx\right)^{\frac{1}{p}}\nonumber\\
&\quad\leq\left(\int_0^L\left\vert \left(\int_0^L\vert G_{t-s}(x,y)\vert dy\right)^{\frac{p-1}{p}}
\left(\int_0^LG_{t-s}(x,y)\vert \mathcal{J}^n_{\beta}u^n(s,y)\vert ^pdy\right)^{\frac{1}{p}}\right\vert ^pdx\right)^{\frac{1}{p}}\nonumber\\
&\quad\leq\left(\sup_{x\in[0,L]}\int_0^L|G_{t-s}(x,y)|dy\right)^{\frac{p-1}{p}}
\left(\int_0^L\int_0^L|G_{t-s}(x,y)|\vert \mathcal{J}^n_{\beta}u^n(s,y)\vert ^pdxdy\right)^{\frac{1}{p}}\nonumber\\
&\quad\leq C_T\vert \vert \mathcal{J}^n_{\beta}u^n_s\vert \vert _p.
\end{align}
Therefore, it follows from (\ref{eq:A_2_1}), (\ref{eq:A_2_2}), and the H\"{o}lder inequality that
\begin{align*}
\label{eq:A_2}
A_2\leq&\dfrac{\sin(\pi\beta)C_{p,T}}{\pi}\mathbb{E}\left[\sup_{0\leq t\leq T}\left(\int_0^t(t-s)^{\beta-1}\vert \vert \mathcal{J}^n_{\beta}u^n_s\vert \vert _pds\right)^p\right]\nonumber\\
\leq&\dfrac{\sin(\pi\beta)C_{p,T}}{\pi}\mathbb{E}\left[\sup_{0\leq t\leq T}\left(\int_0^{t}1^{\frac{p}{p-1}}ds\right)^{p-1}
\left(\int_0^{t}(t-s)^{(\beta-1)p}\vert \vert \mathcal{J}^n_{\beta}u^n_s\vert \vert _p^pds\right)\right]
\nonumber\\
\leq&\dfrac{\sin(\pi\beta)C_{p,T}}{\pi}\int_0^{T}
(T-s)^{(\beta-1)p}
\mathbb{E}[\vert \vert \mathcal{J}^n_{\beta}u^n_s\vert \vert _p^p]ds.
\end{align*}
By (\ref{eq:momentestimation}), 
it also holds that
\begin{align}
A_2
\leq&\dfrac{\sin(\pi\beta)C_{p,K,\alpha,T}}{\pi}
\int_0^{T}(T-s)^{(\beta-1)p}\left(1+\mathbb{E}
\left[\sup_{0\leq r\leq s}\vert \vert u^n_r\vert \vert 
_p^p\right]\right)ds\nonumber\\
\leq&\dfrac{\sin(\pi\beta)C_{p,K,\alpha,T}}{\pi}
\left(1+\int_0^{T}(T-s)^{(\beta-1)p}\mathbb{E}
\left[\sup_{0\leq r\leq s}\vert \vert u^n_r
\vert \vert _p^p\right]ds\right).
\end{align}
Combining (\ref{eq:A_2}) and the estimate for $A_1$, 
we have for each $T>0$,
\begin{align*}
&\mathbb{E}\left[\sup_{0\leq t \leq T}\vert \vert u^n_t\vert \vert _p^p\right]\leq
C_{p,T}
+\dfrac{\sin(\pi\beta)C_{p,K,\alpha,T}}{\pi}
\int_0^{T}(T-s)^{(\beta-1)p}\mathbb{E}
\left[\sup_{0\leq r\leq s}\vert \vert u^n_r\vert \vert 
_p^p\right]ds.
\end{align*}
Since $\beta>1-1/p$,  
applying a generalized Gronwall's Lemma
(see, e.g., Lin \cite[Theorem 1.2]{Lin:2013}), we have 
\begin{align*}
\sup_{n}\mathbb{E}\left[\sup_{0\leq t \leq T}\vert \vert 
u^n_t\vert \vert _p^p\right]\leq C_{p,K,\alpha,T},
\,\,\,\text{for}\,\,\,p\in(\alpha,5/3),
\end{align*}
which completes the proof.
\qed
\end{proof}
\begin{remark}
Note that one can
estimate term $A_2$ in the proof
of Lemma \ref{lem:uniformbound} using the Kotelenets inequality or similar maximal inequalities. For more details, we refer to Marinelli et al. \cite{Marinelli:2009} and references therein.
\end{remark}

\begin{lemma}
\label{lem:temporalestimation}
Suppose that $\alpha\in(1,5/3)$ and 
for each $n\geq1$ $u^n$ is the solution
to equation (\ref{eq:approximatingsolution}) 
given by {\rm Proposition \ref{th:Approximainresult}}. 
Then for given $T>0$, $0\leq h\leq\delta$ 
and $p\in(\alpha,5/3)$
\begin{equation}
\label{eq:temporalestimation}
\lim_{\delta\rightarrow0}\sup_n\mathbb{E}\left[\sup_{0\leq t\leq T}\sup_{0\leq h\leq \delta}\vert \vert 
u^n_{t+h}-u^n_t\vert \vert _p^p\right]=0.
\end{equation}
\end{lemma}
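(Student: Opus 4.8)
The plan is to adapt the factorization scheme used in the proof of Lemma \ref{lem:uniformbound} to the increment $u^n_{t+h}-u^n_t$. Starting from the mild form (\ref{mildformapproxi0}) and combining the semigroup identity (\ref{eq:Greenetimation1}) with the stochastic Fubini theorem, one first checks the decomposition $u^n_{t+h}-u^n_t=I^n_1(t,h)+I^n_2(t,h)+I^n_3(t,h)$, where
\begin{align*}
I^n_1(t,h)&=\int_0^L\big(G_{t+h}(\cdot,y)-G_t(\cdot,y)\big)u_0(y)\,dy,\qquad I^n_2(t,h)=(G_h-\mathrm{Id})S^n_t,\\
I^n_3(t,h)&=\int_t^{(t+h)+}\!\int_0^L\!\int_{\mathbb{R}\setminus\{0\}}G_{t+h-s}(\cdot,y)\varphi^n(u^n(s-,y))z\,\tilde N(ds,dy,dz),
\end{align*}
and $S^n_t$ is the stochastic‑convolution part of $u^n_t$. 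As in Lemma \ref{lem:uniformbound} fix $\beta\in\big(1-1/p,\tfrac{3}{2p}-\tfrac{1}{2}\big)$, which is possible precisely because $p<5/3$, and recall $S^n_t=\tfrac{\sin(\beta\pi)}{\pi}\int_0^t(t-s)^{\beta-1}G_{t-s}Y^n_s\,ds$ with $Y^n_s:=\mathcal{J}^n_\beta u^n_s$; combining (\ref{eq:momentestimation}) with the uniform bound (\ref{eq:unformlybounded}) gives $\sup_n\sup_{0\le s\le T}\mathbb{E}[\|Y^n_s\|_p^p]<\infty$. Throughout, (\ref{eq:Greenetimation0}) is used in the Schur‑test form $\|G_\tau g\|_p\le C_T\|g\|_p$ for $0\le\tau\le T$, as in (\ref{eq:A_2_2}).

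The term $I^n_1$ needs no factorization: since $G_{t+h}(\cdot,y)-G_t(\cdot,y)=\big(G_t(G_h-\mathrm{Id})\big)(\cdot,y)$, the uniform $L^p$‑boundedness of $G_t$ gives $\sup_{0\le t\le T}\sup_{0\le h\le\delta}\|I^n_1(t,h)\|_p^p\le C_T\big(\sup_{0\le h\le\delta}\|(G_h-\mathrm{Id})u_0\|_p\big)^p$, and the right side is dominated by a fixed multiple of $\|u_0\|_p^p$ and tends to $0$ as $\delta\to0$ for a.e.\ $\omega$ by the strong $L^p$‑continuity of the heat semigroup, so dominated convergence and $\mathbb{E}[\|u_0\|_p^p]<\infty$ yield $\mathbb{E}\big[\sup_{0\le t\le T}\sup_{0\le h\le\delta}\|I^n_1(t,h)\|_p^p\big]\to0$, independently of $n$. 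For $I^n_3$, applying the factorization identity of Lemma \ref{lem:uniformbound} on the interval $[t,t+h]$ (the stochastic‑Fubini computation there, now with $s,r\in[t,t+h]$) gives
\begin{align*}
I^n_3(t,h)=\frac{\sin(\beta\pi)}{\pi}\int_0^h(h-\rho)^{\beta-1}\,G_{h-\rho}\tilde Y^n_{t,t+\rho}\,d\rho,\qquad \tilde Y^n_{t,r}:=\int_t^{r+}\!\int_0^L\!\int_{\mathbb{R}\setminus\{0\}}(r-s)^{-\beta}G_{r-s}(\cdot,y)\varphi^n(u^n(s-,y))z\,\tilde N(ds,dy,dz).
\end{align*}
The Burkholder--Davis--Gundy inequality, (\ref{eq:element-inequ})--(\ref{eq:jumpestimate}), (\ref{eq:glo-lin-growth}), (\ref{eq:Greenetimation2}) and Proposition \ref{th:Approximainresult} then produce, uniformly in $n$ and $0\le t\le T$, the short‑interval moment bound $\mathbb{E}[\|\tilde Y^n_{t,t+\rho}\|_p^p]\le C_{p,K,\alpha,T}\,\rho^{\gamma}$ with $\gamma:=1-\beta p-\tfrac{p-1}{2}>0$, the positivity of $\gamma$ being exactly the upper constraint in (\ref{eq:factorization}). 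Carrying out the Minkowski/H\"older estimate from the proof of Lemma \ref{lem:uniformbound} — with the weight $(h-\rho)^{\beta-1}$, integrable since $\beta>1-1/p$ — and absorbing the suprema over $t$ and $h$ exactly as there, one arrives at $\mathbb{E}\big[\sup_{0\le t\le T}\sup_{0\le h\le\delta}\|I^n_3(t,h)\|_p^p\big]\le C_{p,K,\alpha,T}\,\delta^{\beta p+\gamma}=C_{p,K,\alpha,T}\,\delta^{(3-p)/2}$, which tends to $0$ uniformly in $n$.

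It remains to treat $I^n_2(t,h)=\tfrac{\sin(\beta\pi)}{\pi}\int_0^t(t-s)^{\beta-1}(G_h-\mathrm{Id})G_{t-s}Y^n_s\,ds$. The operator $(G_h-\mathrm{Id})G_\tau$ has kernel $G_{\tau+h}(\cdot,\cdot)-G_\tau(\cdot,\cdot)$, so the Schur test gives $\|(G_h-\mathrm{Id})G_\tau g\|_p\le\eta(h,\tau)\|g\|_p$ with $\eta(h,\tau):=\sup_{x}\int_0^L|G_{\tau+h}(x,y)-G_\tau(x,y)|\,dy$, which satisfies $\eta(h,\tau)\le C$ by (\ref{eq:Greenetimation0}) and $\eta(h,\tau)\to0$ as $h\downarrow0$ for each fixed $\tau>0$ by continuity of the heat kernel in $L^1$. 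Running once more the Minkowski/H\"older computation of Lemma \ref{lem:uniformbound} with this extra factor and using $\sup_n\sup_{0\le s\le T}\mathbb{E}[\|Y^n_s\|_p^p]<\infty$ bounds $\mathbb{E}\big[\sup_{0\le t\le T}\sup_{0\le h\le\delta}\|I^n_2(t,h)\|_p^p\big]$ by a constant times $\int_0^T\tau^{(\beta-1)p}\big(\sup_{0\le h\le\delta}\eta(h,\tau)\big)^p\,d\tau$; since $(\beta-1)p>-1$ the weight $\tau^{(\beta-1)p}$ is integrable on $[0,T]$, so this integral tends to $0$ as $\delta\to0$ by dominated convergence, uniformly in $n$. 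Summing the three estimates and letting $\delta\to0$ establishes (\ref{eq:temporalestimation}).

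The main obstacle is the double supremum $\sup_{0\le t\le T}\sup_{0\le h\le\delta}$: Doob's maximal inequality does not apply directly to $I^n_2$ and $I^n_3$ because their stochastic integrands depend on $t$ and $h$, which — as already in Lemma \ref{lem:uniformbound} — forces one through the factorization so as to reduce these terms to deterministic time‑convolution operators acting on auxiliary processes ($Y^n$ and $\tilde Y^n$) that carry only pointwise‑in‑time $p$‑th moment bounds. The second delicate point is the exponent bookkeeping: one needs a single $\beta$ with $1-1/p<\beta<\tfrac{3}{2p}-\tfrac{1}{2}$, which forces $p<5/3$, and this window is simultaneously what renders the singular time weights integrable ($\beta>1-1/p$) and what produces the genuine power $\delta^{(3-p)/2}$ of smallness from $I^n_3$ ($\beta<\tfrac{3}{2p}-\tfrac{1}{2}$).
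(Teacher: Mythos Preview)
Your proof is correct and rests on the same factorization machinery as the paper, but the decomposition is organized differently. The paper writes the stochastic part as $\mathcal{J}^\beta\mathcal{J}^n_\beta u^n_{t+h}-\mathcal{J}^\beta\mathcal{J}^n_\beta u^n_t$ with a \emph{single} auxiliary process $Y^n_s=\mathcal{J}^n_\beta u^n_s$, and then splits the outer deterministic integral into three pieces $B_{2,1},B_{2,2},B_{2,3}$ (kernel difference, power--weight difference, tail over $[t,t+h]$). You instead use the semigroup identity $S^n_{t+h}=G_hS^n_t+I^n_3$ first, so that $I^n_2=(G_h-\mathrm{Id})S^n_t$ is exactly the paper's $B_{2,1}$, while your $I^n_3$ collapses the paper's $B_{2,2}+B_{2,3}$ into a single term that you refactorize \emph{locally} on $[t,t+h]$ with a new process $\tilde Y^n_{t,\cdot}$. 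The payoff of your route is one fewer term and an explicit quantitative rate $\delta^{(3-p)/2}$ for $I^n_3$; the payoff of the paper's route is that the auxiliary process $Y^n_s$ does not depend on $t$ or $h$, so the double supremum $\sup_t\sup_h$ sits only in deterministic kernels and weights, which makes ``absorbing the suprema'' marginally cleaner than in your $I^n_3$ where $\tilde Y^n_{t,t+\rho}$ carries a $t$--dependence. Both arguments handle that last step at the same level of rigor, and the exponent window $1-1/p<\beta<\tfrac{3}{2p}-\tfrac12$ plays the identical role in each.
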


\begin{proof} 
For each $n\geq 1$, by the factorization 
method in the proof of Lemma 
\ref{lem:uniformbound}, we have
$$\mathbb{E}\left[\sup_{0\leq t\leq T}\sup_{0\leq 
h\leq \delta}\vert \vert  u^n_{t+h}-u^n_t\vert \vert 
_p^p\right]\leq
C_p(B_1+B_2),$$
where
\begin{align*}
B_1&=\mathbb{E}\left[\sup_{0\leq t\leq T}
\sup_{0\leq h\leq \delta}\Bigg\vert \Bigg\vert 
\int_0^L(G_{t+h}(\cdot-y)-G_{t}(\cdot-
y))u_0(y)dy\Bigg\vert \Bigg\vert _p^p\right],\\
B_2&=\mathbb{E}\left[\sup_{0\leq t\leq T}
\sup_{0\leq h\leq \delta}\vert \vert \mathcal{J}^{\beta}
\mathcal{J}^n_{\beta}u^n_{t+h}-\mathcal{J}
^{\beta}\mathcal{J}^n_{\beta}u^n_t\vert \vert 
_p^p\right].
\end{align*}
For $B_1$, Young's convolution inequality 
and (\ref{eq:Greenetimation0}) imply that
\begin{align*}
B_1
&\leq \mathbb{E}\left[\int_0^L\sup_{0\leq t\leq T}\sup_{0\leq h\leq \delta}\left(\int_0^L(|G_{t+h}(x,y)|+|G_{t}(x,y)|)dx\right)\vert u_0(y)\vert ^pdy\right]
\leq C_T\mathbb{E}[\vert \vert u_0\vert \vert _p^p]<\infty.
\end{align*}
Therefore, it holds by Lebesgue's 
dominated convergence theorem that
$B_1$ converges to 0 as $\delta\rightarrow0$.

For $B_2$, it is easy to see that 
$$B_2\leq \frac{\sin(\beta\pi)C_p}{\pi}
(B_{2,1}+B_{2,2}+B_{2,3}),$$ 
where
\begin{align*}
B_{2,1}&=\mathbb{E}\Bigg[\sup_{0\leq t\leq T}\sup_{0\leq h\leq \delta}\Bigg\vert \Bigg\vert 
\int_0^{t}\int_0^L(t-s)^{\beta-1}
 (G_{t+h-s}(\cdot,y)-G_{t-s}(\cdot,y))
\mathcal{J}^n_{\beta}u^n(s,y)dyds\Bigg\vert \Bigg\vert _p^p\Bigg],\\
B_{2,2}&=\mathbb{E}\Bigg[\sup_{0\leq t\leq T}\sup_{0\leq h\leq \delta}\Bigg\vert \Bigg\vert 
\int_0^{t}\int_0^L((t+h-s)^{\beta-1}-(t-s)^{\beta-1})G_{t+h-s}(\cdot,y)\mathcal{J}^n_{\beta}u^n(s,y)dyds\Bigg\vert \Bigg\vert _p^p\Bigg],\\
B_{2,3}&=\mathbb{E}\left[\sup_{0\leq t\leq T}\sup_{0\leq h\leq \delta}\Bigg\vert \Bigg\vert 
\int_{t}^{t+h}\int_0^L(t+h-s)^{\beta-1}G_{t+h-s}(\cdot,y)\mathcal{J}^n_{\beta}u^n(s,y)dyds\Bigg\vert \Bigg\vert _p^p\right].
\end{align*}
By the assumption $p\in(\alpha,5/3)$ of this lemma we can choose a $0<\beta<1$ satisfying $1-1/p<\beta<3/2p-1/2$.
By Lemma \ref{lem:uniformbound} and (\ref{eq:momentestimation}), there exists a constant $C_{p,K,\alpha,T}$ such that
\begin{equation}
\label{ineq:0}
\sup_{0\leq t\leq T}\mathbb{E}\left[\vert \vert \mathcal{J}^n_{\beta}u^n_t\vert \vert _p^p\right]\leq C_{p,K,\alpha,T}.
\end{equation}

To estimate $B_{2,1}$, 
we set $G^h_t(x,y)=G_{t+h}(x,y)-G_{t}(x,y)$. 
Similar to the estimates for
(\ref{eq:A_2_1}) and (\ref{eq:A_2_2}) in the proof 
of Lemma \ref{lem:uniformbound}, we have 
\begin{align*}
B_{2,1}\leq&\mathbb{E}\Bigg[\sup_{0\leq t\leq T}
\sup_{0\leq h\leq
\delta}\Bigg(\int_0^{t}(t-s)^{\beta-1}
\Bigg(\sup_{x\in [0,L]}
\int_0^LG^h_{t-s}(x,y)dy\Bigg)^{\frac{p-1}{p}}
\vert \vert \mathcal{J}^n_{\beta}u^n_s\vert \vert 
_pds\Bigg)^p\Bigg].
\end{align*}
It also follows from the H\"{o}lder inequality and (\ref{ineq:0}) that
\begin{align*}
B_{2,1}\leq&C_{p,T}\sup_{0\leq t\leq T}\mathbb{E}\left[\vert \vert \mathcal{J}^n_{\beta}u^n_t\vert \vert _p^p\right]
\sup_{0\leq h\leq \delta}
\left(\int_0^{T}s^{(\beta-1)p}\left(\sup_{x\in [0,L]}\int_0^LG_{t-s}^h(x,y)dy\right)^{p-1}ds\right)\nonumber\\
\leq&C_{p,K,\alpha,T}\sup_{0\leq h\leq \delta}
\left(\int_0^{T}s^{(\beta-1)p}\left(\sup_{x\in [0,L]}\int_0^LG^h_{t-s}(x,y)dy\right)^{p-1}ds\right).
\end{align*}
Moreover, since $\beta>1-1/p$, it holds by (\ref{eq:Greenetimation0}) that
\begin{align*}
&\int_0^{T}s^{(\beta-1)p}\left(\sup_{x\in [0,L]}\int_0^LG_{t-s}^h(x,y)dy\right)^{p-1}ds\\
&\quad\leq\int_0^{T}s^{(\beta-1)p}\left(\sup_{x\in
[0,L]}\left(\int_0^L|G_{t+h-s}(x,y)|dy+\int_0^L|G_{t-s}(x,y)|dy\right)\right)^{p-1}ds\\
&\quad\leq 
C_{p,T}\int_0^{T}s^{(\beta-1)p}ds<\infty.
\end{align*}
Thus,  Lebesgue's Dominated Convergence Theorem implies that 
$B_{2,1}$ converges to 0 as $\delta\rightarrow0$.

For $B_{2,2}$, the Minkowski inequality and Young's convolution inequality imply that
\begin{align*}
B_{2,2}\leq&\mathbb{E}\Bigg[\sup_{0\leq t\leq T}\sup_{0\leq h\leq \delta}
\Bigg(\int_0^{t}((t+h-s)^{\beta-1}-(t-s)^{\beta-1})
\Bigg\vert \Bigg\vert \int_0^LG_{t+h-s}(\cdot,y)\mathcal{J}^n_{\beta}u^n(s,y)dy\Bigg\vert \Bigg\vert _p ds\Bigg)^p\Bigg]\nonumber\\
\leq&C_{p,T}\mathbb{E}\left[\sup_{0\leq t\leq T}\sup_{0\leq h\leq \delta}\left(\int_0^{t}((t+h-s)^{\beta-1}-(t-s)^{\beta-1})
\vert \vert \mathcal{J}^n_{\beta}u^n_s\vert \vert _p ds\right)^p\right].\nonumber
\end{align*}
By the H\"{o}lder inequality and (\ref{ineq:0}) we have for 
$\beta>1-1/p$, 
\begin{align*}
B_{2,2}\leq&C_{p,T}\mathbb{E}\left[\sup_{0\leq t\leq T}\sup_{0\leq h\leq \delta}
\int_0^{t}\vert (t+h-s)^{\beta-1}-(t-s)^{\beta-1}\vert ^{p}
\vert \vert \mathcal{J}^n_{\beta}u^n_s\vert \vert _p^p ds\right]\nonumber\\
\leq&C_{p,T}\sup_{0\leq t\leq T}\mathbb{E}[\vert \vert \mathcal{J}^n_{\beta}u^n_t\vert \vert _p^p]\int_0^{T}\vert (s+\delta)^{\beta-1}-s^{(\beta-1)}\vert ^pds\nonumber\\
\leq&C_{p,K,\alpha,T}\int_0^{T}\vert (s+\delta)^{\beta-1}-s^{(\beta-1)}\vert ^pds<\infty.
\end{align*}
Therefore, by Lebesgue's dominated convergence theorem,
we know that $B_{2,2}$ converges to 0 as $\delta\rightarrow0$.

For $B_{2,3}$, similar to $B_{2,2}$, we get
\begin{align}
\label{ineq:B_{2_3}}
B_{2,3}\leq&\mathbb{E}\left[\sup_{0\leq t\leq T}\sup_{0\leq h\leq \delta}
\left(\int_{t}^{t+h}(t+h-s)^{\beta-1}\Big\vert \Big\vert \int_0^LG_{t+h-s}(\cdot,y)\mathcal{J}^n_{\beta}u^n(s,y)dy\Big\vert \Big\vert _p 
ds\right)^p\right]\nonumber\\
\leq&C_{p,T}\mathbb{E}\left[\sup_{0\leq t\leq T}\sup_{0\leq h\leq \delta}\left(
\int_{t}^{t+h}(t+h-s)^{\beta-1}\vert \vert \mathcal{J}^n_{\beta}u^n_s\vert \vert _p ds\right)^p\right]\nonumber\\
\leq&C_{p,T}\mathbb{E}\left[\sup_{0\leq t\leq T}\sup_{0\leq h\leq \delta}
\int_{t}^{t+h}\vert (t+h-s)^{\beta-1}\vert ^p\vert \vert \mathcal{J}^n_{\beta}u^n_s\vert \vert _p^p ds\right]\nonumber\\
\leq&C_{p,T}\sup_{0\leq t\leq T}\mathbb{E}[\vert \vert \mathcal{J}^n_{\beta}u^n_t\vert \vert _p^p]\sup_{0\leq h\leq \delta}
\int_{t}^{t+h}\vert (t+h-s)^{\beta-1}\vert 
\leq C_{p,K,\alpha,T}\int_0^{\delta}s^{(\beta-1)p}ds.
\end{align}
Since $\beta>1-1/p$, we can conclude that the right-hand side
of (\ref{ineq:B_{2_3}}) converges to 0 as $\delta\rightarrow0$. Therefore, by 
the estimates of $B_{2,1}, B_{2,2}, B_{2,3}$ and $B_1$, the
desired result (\ref{eq:temporalestimation}) holds,
which completes the proof.
\qed
\end{proof}

\begin{lemma}
\label{lem:spatialestimation}
For each $n\geq1$ let $u^n$ be the solution
to equation (\ref{eq:approximatingsolution}) 
given by {\rm Proposition \ref{th:Approximainresult}}.
Then for given $t\in[0,\infty)$, 
$0\leq\vert x_1\vert \leq\delta$ 
and $p\in(\alpha,2]$ 
\begin{equation}
\label{eq:spatialestimation}
\lim_{\delta\rightarrow 0}\sup_n\mathbb{E}\left[ 
\sup_{\vert x_1\vert \leq \delta}\vert \vert u^n(t,\cdot+x_1)-u^n(t,
\cdot)\vert \vert _p^p\right]=0.
\end{equation}
\end{lemma}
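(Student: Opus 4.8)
The plan is to reduce the spatial increment of $u^{n}(t,\cdot)$ to two pieces by the same factorization identity already used in Lemma~\ref{lem:uniformbound}. Fix $0<\beta<1$ satisfying (\ref{eq:factorization}), put $Z^{n}(s,y):=\mathcal{J}^{n}_{\beta}u^{n}(s,y)$ and $w(t,x):=\int_{0}^{L}G_{t}(x,y)u_{0}(y)\,dy$, and recall from the proof of Lemma~\ref{lem:uniformbound} that $\mathcal{J}^{\beta}\mathcal{J}^{n}_{\beta}u^{n}(t,\cdot)=u^{n}(t,\cdot)-w(t,\cdot)$. The first move is to work with the everywhere–defined representative
\[
u^{n}(t,x)=w(t,x)+R^{n}(t,x),\qquad
R^{n}(t,x):=\frac{\sin(\beta\pi)}{\pi}\int_{0}^{t}\!\!\int_{0}^{L}(t-s)^{\beta-1}G_{t-s}(x,y)Z^{n}(s,y)\,dy\,ds ,
\]
extended by $0$ for $x\notin[0,L]$ (this bookkeeping is essential: a merely a.e.-defined version would make $\sup_{|x_{1}|\le\delta}$ meaningless, since the exceptional set could be smeared out by the shifts). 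Then $u^{n}(t,\cdot+x_{1})-u^{n}(t,\cdot)=\big(w(t,\cdot+x_{1})-w(t,\cdot)\big)+\big(R^{n}(t,\cdot+x_{1})-R^{n}(t,\cdot)\big)$, and it suffices to treat the two terms separately; when $t=0$ there is nothing to do beyond the first term, since $R^{n}\equiv0$ and $w=u_{0}$.

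For the first term, which is independent of $n$, I would note that Jensen's inequality together with (\ref{eq:Greenetimation0}) gives $\mathbb{E}[\|w(t,\cdot)\|_{p}^{p}]\le C\,\mathbb{E}[\|u_{0}\|_{p}^{p}]<\infty$, so $w(t,\cdot)\in L^{p}([0,L])$ almost surely. Continuity of translation in $L^{p}$ then yields $\sup_{|x_{1}|\le\delta}\|w(t,\cdot+x_{1})-w(t,\cdot)\|_{p}^{p}\to0$ a.s. as $\delta\to0$, and this quantity is dominated by $2^{p}\|w(t,\cdot)\|_{p}^{p}\in L^{1}$, so dominated convergence gives $\mathbb{E}\big[\sup_{|x_{1}|\le\delta}\|w(t,\cdot+x_{1})-w(t,\cdot)\|_{p}^{p}\big]\to0$.

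For the second term, set $g_{s}(x,y)=G_{t-s}(x+x_{1},y)-G_{t-s}(x,y)$ and
\[
\omega_{r}(\delta):=\sup_{|h|\le\delta}\ \sup_{\xi}\int_{0}^{L}\big|G_{r}(\xi+h,y)-G_{r}(\xi,y)\big|\,dy ,\qquad r>0 .
\]
Applying Minkowski's integral inequality and then the Hölder splitting $|g_{s}|=|g_{s}|^{(p-1)/p}|g_{s}|^{1/p}$ in the $y$-variable exactly as in the derivation of (\ref{eq:A_2_2}), combined with (\ref{eq:Greenetimation0}), gives, uniformly over $|x_{1}|\le\delta$,
\[
\|R^{n}(t,\cdot+x_{1})-R^{n}(t,\cdot)\|_{p}\le C\int_{0}^{t}(t-s)^{\beta-1}\,\omega_{t-s}(\delta)^{(p-1)/p}\,\|Z^{n}_{s}\|_{p}\,ds .
\]
A further Hölder inequality (the ``constant function'' trick used for $A_{2}$ in Lemma~\ref{lem:uniformbound}) and then taking expectations, using $(\beta-1)p>-1$ (from $\beta>1-1/p$) and the uniform bound $\sup_{n}\sup_{0\le s\le t}\mathbb{E}[\|Z^{n}_{s}\|_{p}^{p}]\le C_{p,K,\alpha,t}$ (which follows from Lemma~\ref{lem:uniformbound} and (\ref{eq:momentestimation})), yield
\[
\sup_{n}\mathbb{E}\Big[\sup_{|x_{1}|\le\delta}\|R^{n}(t,\cdot+x_{1})-R^{n}(t,\cdot)\|_{p}^{p}\Big]\le C_{p,K,\alpha,t}\Big(\int_{0}^{t}\omega_{r}(\delta)\,dr\Big)^{p-1} .
\]

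It then remains to see that $\int_{0}^{t}\omega_{r}(\delta)\,dr\to0$ as $\delta\to0$, and this is the only point where I invoke a heat-kernel fact not already recorded: $\omega_{r}(\delta)\le 2C$ for every $\delta$ by (\ref{eq:Greenetimation0}), while for each fixed $r>0$ one has $\omega_{r}(\delta)\to0$ as $\delta\to0$, because the explicit formula shows that $(x,y)\mapsto G_{r}(x,y)$ is bounded and jointly (hence uniformly) continuous on $[0,L]^{2}$ and vanishes on the boundary, so that the zero-extension $x\mapsto G_{r}(x,\cdot)$ is uniformly continuous from $\mathbb{R}$ into $L^{1}([0,L])$; dominated convergence on $[0,t]$ finishes the argument. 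Thus the main obstacle is not analytic difficulty but structural care: picking the pointwise representative so that $\sup_{|x_{1}|\le\delta}$ is well posed, and isolating the spatial modulus $\omega_{r}(\delta)$ so that all $x_{1}$-dependence is concentrated in one factor that vanishes, uniformly in $n$, by dominated convergence. The restriction $p<5/3$ enters only through the existence of $\beta$ in (\ref{eq:factorization}) and the moment bound on $Z^{n}$; it has no bearing on the decay in $x_{1}$, which is why no restriction on $\alpha$ beyond that of Lemma~\ref{lem:uniformbound} is needed.
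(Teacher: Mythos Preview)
Your argument is sound as far as it goes, but it proves the lemma only for $p\in(\alpha,5/3)$, not the full range $p\in(\alpha,2]$ that is stated. The restriction is built into the factorization: you need $\beta$ satisfying \eqref{eq:factorization}, and that interval is nonempty only when $p<5/3$. Both endpoints are genuinely used---the lower bound $\beta>1-1/p$ so that $\int_0^t r^{(\beta-1)p}\,dr<\infty$ in your second H\"older step, the upper bound $\beta<\tfrac{3}{2p}-\tfrac12$ so that the moment of $Z^n=\mathcal{J}^n_\beta u^n$ in \eqref{eq:momentestimation} is finite. (Incidentally, for that moment you could invoke Proposition~\ref{th:Approximainresult} rather than Lemma~\ref{lem:uniformbound}, since the derivation of \eqref{eq:momentestimation} only needs $\sup_s\mathbb{E}\|u^n_s\|_p^p$; but this does not relax the constraint on $\beta$.) For the application in Proposition~\ref{prop:tightnessresult2} your range suffices, since only $p\in(\alpha,5/3)$ is used there; as a proof of Lemma~\ref{lem:spatialestimation} itself, however, the range $5/3\le p\le 2$ is missing.

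The paper's proof is different and does cover the full range. It works directly with the mild form \eqref{mildformapproxi0}: one selects a pathwise maximizer $x_1^{n,\delta}(t)$ of $x_1\mapsto\|u^n(t,\cdot+x_1)-u^n(t,\cdot)\|_p$, then applies BDG to the stochastic integral with integrand $(G_{t-s}(\cdot+x_1^{n,\delta},y)-G_{t-s}(\cdot,y))\varphi^n(u^n(s-,y))z$. After \eqref{eq:element-inequ}--\eqref{eq:jumpestimate}, \eqref{eq:glo-lin-growth} and \eqref{eq:Greenetimation2}, the bound reduces to $\int_0^t(t-s)^{-(p-1)/2}\,ds$---finite for every $p\le 2$---times $L+\sup_{0\le s\le t}\mathbb{E}\|u^n_s\|_p^p$, which is controlled by \eqref{eq:Approximomentresult}; dominated convergence finishes. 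No factorization, no restriction to $p<5/3$. Your route does buy something, though: because factorization replaces the stochastic integral by a deterministic one in $s$, you can take $\sup_{|x_1|\le\delta}$ pathwise \emph{before} any martingale inequality is applied, making the handling of the supremum entirely transparent, whereas the paper's insertion of a random $x_1^{n,\delta}(t)$ into a BDG estimate is formally more delicate.
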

\begin{proof}
Since the shift operator is continuous in $L^p([0,L])$, then for each $n\geq1$ 
and $\delta>0$
there exists a pathwise 
$x_1^{n,\delta}(t)\in\mathbb{R}$ such that
$\vert x_1^{n,\delta}(t)\vert \leq\delta$ and 
$$\sup_{\vert x_1\vert \leq \delta}\vert \vert 
u^n(t,\cdot+x_1)-u^n(t,\cdot)\vert \vert _p^p=\vert \vert  u^n(t,\cdot+x_1^{n,\delta}(t))-u^n(t,\cdot)\vert \vert _p^p.$$
As before, it is easy to see that 
$$\mathbb{E}[\vert \vert 
u^n(t,\cdot+x_1^{n,\delta}(t))-u^n(t,\cdot)
\vert \vert _p^p]\leq C_p(C_1+C_2),$$ 
where
\begin{align*}
C_1&=\mathbb{E}\left[
\bigg\vert \bigg\vert \int_0^L
(G_{t}(\cdot+x_1^{n,\delta}(t),y)
-G_{t}(\cdot,y))u_0(y)dy\bigg\vert \bigg\vert _p^p
\right],
\\
C_2&=\mathbb{E}\bigg[\bigg\vert \bigg\vert 
\int_0^{t+}\int_0^L\int_{\mathbb{R}\setminus\{0\}}
(G_{t-s}(\cdot+{x_1^{n,\delta}(t)},y)
-G_{t-s}(\cdot,y))
\varphi^n(u^n(s-,y))
z\tilde{N}(dz,dy,ds)\bigg\vert \bigg\vert _p^p\bigg].
\end{align*}
For $C_1$, Young's convolution inequality
and (\ref{eq:Greenetimation0}) imply that
\begin{align*}
C_1&\leq\mathbb{E}\left[\int_0^L
\left(\int_0^L(|G_{t}(x+x_1^{n,
\delta}(t),y)|+|G_{t}(x,y))|dx\right)
\vert u_0(y)\vert ^pdy\right]
\leq C_T\mathbb{E}[\vert \vert u_0\vert \vert _p^p]<\infty.
\end{align*}
Thus, the Lebesgue dominated convergence theorem implies that $C_1$ converges to 0 as $\delta\rightarrow0$.

For $C_2$, it follows from the Burkholder-Davis-Gundy inequality, (\ref{eq:element-inequ})-(\ref{eq:jumpestimate}), (\ref{eq:glo-lin-growth})
and (\ref{eq:Greenetimation2}) that for $p\in(\alpha,2]$
\begin{align*}
C_2
=&\int_0^L\mathbb{E}\bigg[\bigg\vert \int_0^{t+}\int_0^L\int_{\mathbb{R}\setminus\{0\}}
(G_{t-s}(x+{x_1^{n,\delta}(t)},y)-G_{t-s}(x,y))\varphi^n(u^n(s-,y))z
\tilde{N}(ds,dy,dz)\bigg\vert ^p\bigg]dx\nonumber\\
\leq&C_p\int_0^L\int_0^{t}\int_0^L\int_{\mathbb{R}\setminus\{0\}}
\mathbb{E}[\vert (G_{t-s}(x+{x_1^{n,\delta}(t)},y)-G_{t-s}(x,y))\varphi^n(u^n(s,y))z\vert ^p]\nu_{\alpha}(dz)dydsdx\nonumber\\
\leq&C_{p,K,\alpha}\int_0^L\int_0^{t}\int_0^L\mathbb{E}[(1+\vert u^n(s,y)\vert )^p]
\vert (G_{t-s}(x+{x_1^{n,\delta}(t)},y)-G_{t-s}(x,y))\vert ^pdydsdx\nonumber\\
\leq&C_{p,K,\alpha}\left(\int_0^L\int_0^{t}\vert G_{t-s}(x+{x_1^{n,\delta}(t)},y)-G_{t-s}(x,y)\vert ^pdsdx\right)
\left(L+\sup_{0\leq s\leq t}\mathbb{E}\left[\vert \vert u^n_s\vert \vert _p^p\right]\right)
\\
\leq& 
C_{p,K,\alpha}\left(\int_0^L\int_0^{t}(\vert G_{t-s}(x+{x_1^{n,\delta}(t)},y)\vert ^p+\vert G_{t-s}(x,y)\vert ^p)dsdx\right)
\left(L+\sup_{0\leq s\leq t}\mathbb{E}\left[\vert \vert u^n_s\vert \vert _p^p\right]\right)
\\
\leq&
C_{p,K,\alpha}\left(\int_0^t(t-s)^{-\frac{p-1}{2}}ds\right)
\left(L+\sup_{0\leq s\leq t}\mathbb{E}\left[\vert \vert u^n_s\vert \vert _p^p\right]\right)
\end{align*}

Therefore, it holds 
by (\ref{eq:Approximomentresult}) and
Lebesgue's dominated convergence theorem 
that $C_2$ converges to 0 as $\delta\rightarrow0$. 
Hence,  by the 
estimates of $C_1$ and $C_2$, we obtain
\begin{align*}
\lim_{\delta\rightarrow 0}\sup_n\mathbb{E}\bigg[\sup_{\vert x_1\vert \leq
\delta}\vert \vert u^n(t,\cdot+x_1)-u^n(t,\cdot)\vert \vert _p^p\bigg]&=0,
\end{align*}
which completes the proof.
\qed
\end{proof}

\begin{proposition}
\label{prop:tightnessresult2}
Suppose that $\alpha\in(1,5/3)$.
The sequence of solutions  
$(u^n)_{n\geq1}$ to equation 
(\ref{eq:approximatingsolution}) 
given by {\rm Proposition \ref{th:Approximainresult}}
is tight in  $D([0,\infty),L^p([0,L]))$
for $p\in(\alpha,5/3)$.
\end{proposition}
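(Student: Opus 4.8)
The plan is to apply the tightness criterion of Lemma~\ref{lem:tightcriterion} with $E=L^p([0,L])$ and $\rho$ the $L^p$-metric, for each fixed $p\in(\alpha,5/3)$; note that $(\alpha,5/3)\subset(\alpha,2]$, so that Lemmas~\ref{lem:uniformbound}, \ref{lem:temporalestimation} and~\ref{lem:spatialestimation} are all available under the standing assumption $\alpha\in(1,5/3)$. Condition~(ii) of Lemma~\ref{lem:tightcriterion} is immediate: taking the exponent in \eqref{eq:tightcriterion2} to be $p$ itself and using $(\rho(u^n_{t+h},u^n_t)\wedge1)^p\leq\|u^n_{t+h}-u^n_t\|_p^p$, the temporal estimate of Lemma~\ref{lem:temporalestimation} gives
\[
\lim_{\delta\to0}\sup_n\mathbb{E}\Big[\sup_{0\leq t\leq T}\sup_{0\leq h\leq\delta}(\rho(u^n_{t+h},u^n_t)\wedge1)^p\Big]\leq\lim_{\delta\to0}\sup_n\mathbb{E}\Big[\sup_{0\leq t\leq T}\sup_{0\leq h\leq\delta}\|u^n_{t+h}-u^n_t\|_p^p\Big]=0,
\]
which is precisely \eqref{eq:tightcriterion2}.

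The real work is condition~(i), the compact containment at rational times, which I would obtain from the $L^p$-compactness characterization of Lemma~\ref{lem:compactcriterion}. Fix $\varepsilon>0$, $T>0$ and a rational $t\in[0,T]$. By Lemma~\ref{lem:uniformbound}, $\sup_n\mathbb{E}[\|u^n_t\|_p^p]\leq C_{p,K,\alpha,T}$, so Markov's inequality produces $R>0$ with $\inf_n\mathbb{P}[\,\|u^n_t\|_p\leq R\,]\geq1-\varepsilon/2$, which controls item~(a) of Lemma~\ref{lem:compactcriterion}. Next, for each integer $k\geq1$, Lemma~\ref{lem:spatialestimation} allows me to choose $\delta_k\downarrow0$ so that $\sup_n\mathbb{E}\big[\sup_{|x_1|\leq\delta_k}\|u^n(t,\cdot+x_1)-u^n(t,\cdot)\|_p^p\big]$ is as small as needed; picking thresholds $r_k\downarrow0$ and applying Markov's inequality together with a union bound over $k$ yields a set $A^n$ with $\inf_n\mathbb{P}[A^n]\geq1-\varepsilon/2$ on which $\sup_{|x_1|\leq\delta_k}\|u^n(t,\cdot+x_1)-u^n(t,\cdot)\|_p^p\leq r_k$ for every $k$. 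Since $r_k\downarrow0$, this is exactly a uniform $L^p$ translation-modulus bound, giving item~(b); and item~(c) near $x=L$ follows from the same bound once functions are extended by $0$ outside $[0,L]$ (consistent with the Dirichlet boundary condition), since then $u^n(t,x)=u^n(t,x)-u^n(t,x+L/\gamma)$ for $x\in(L-L/\gamma,L]$, so that $\int_{(L-L/\gamma,L]}|u^n(t,x)|^p\,dx\leq\|u^n(t,\cdot+L/\gamma)-u^n(t,\cdot)\|_p^p$ (alternatively, item~(c) is automatic for any fixed function in $L^p([0,L])$ by dominated convergence). Letting $\Gamma_{\varepsilon,T}$ be the closure in $L^p([0,L])$ of $\{f:\|f\|_p\leq R,\ \sup_{|x_1|\leq\delta_k}\|f(\cdot+x_1)-f(\cdot)\|_p^p\leq r_k\ \text{for all}\ k\}$, Lemma~\ref{lem:compactcriterion} shows $\Gamma_{\varepsilon,T}$ is compact, and intersecting the two events gives $\inf_n\mathbb{P}[\,u^n(t,\cdot)\in\Gamma_{\varepsilon,T}\,]\geq1-\varepsilon$, which is condition~(i).

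With conditions~(i) and~(ii) in hand, Lemma~\ref{lem:tightcriterion} yields the tightness of $(u^n)_{n\geq1}$ in $D([0,\infty),L^p([0,L]))$ for every $p\in(\alpha,5/3)$, which is the assertion of the proposition. I expect the main obstacle to be the construction of the compact set $\Gamma_{\varepsilon,T}$: one must ensure that the translation modulus of continuity in item~(b) of Lemma~\ref{lem:compactcriterion} is made uniform over $\Gamma_{\varepsilon,T}$ (hence the need to intersect the countably many events indexed by $k$, each handled via Lemma~\ref{lem:spatialestimation} and Markov's inequality) and that the behaviour at the right endpoint $x=L$ is treated correctly via the zero-extension tied to the Dirichlet boundary condition. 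Everything else reduces to routine applications of Markov's inequality and the three estimates in Lemmas~\ref{lem:uniformbound}, \ref{lem:temporalestimation} and~\ref{lem:spatialestimation}.
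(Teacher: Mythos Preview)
Your proposal is correct and follows the same overall route as the paper: verify condition~(ii) of Lemma~\ref{lem:tightcriterion} via Lemma~\ref{lem:temporalestimation}, and verify condition~(i) by building a compact $\Gamma_{\varepsilon,T}\subset L^p([0,L])$ using the Fr\'echet--Kolmogorov characterization of Lemma~\ref{lem:compactcriterion}, with item~(a) coming from the uniform moment bound and item~(b) from Lemma~\ref{lem:spatialestimation} via a union bound over a sequence $\delta_k\downarrow0$.

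The one genuine difference is in how you handle item~(c). The paper establishes the tail condition $\sup_n\mathbb{P}\big[\int_{(L-L/\gamma,L]}|u^n(t,x)|^p\,dx>\varepsilon\big]\to0$ by a separate direct computation (splitting into the deterministic and stochastic parts of the mild formulation and applying the Burkholder--Davis--Gundy inequality). You instead deduce~(c) from~(b) by extending by zero outside $[0,L]$ and noting that $\int_{(L-L/\gamma,L]}|u^n(t,x)|^p\,dx\leq\|u^n(t,\cdot+L/\gamma)-u^n(t,\cdot)\|_p^p$. This is a legitimate and tidier shortcut, provided the shift in Lemma~\ref{lem:spatialestimation} is interpreted with zero extension (which is consistent with the Dirichlet boundary condition and with how the Green's function behaves at $x=0,L$). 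One caution: your parenthetical alternative ``item~(c) is automatic for any fixed function by dominated convergence'' is not a valid substitute, since compactness requires the tail decay to be \emph{uniform} over $\Gamma_{\varepsilon,T}$; keep the zero-extension argument and drop that remark.
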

\begin{proof}
From (\ref{eq:Approximomentresult}) and
Markov's inequality, for each $\varepsilon>0$, $p\in(\alpha,2]$ and $T>0$
there exists a $N\in\mathbb{N}$ such that
\begin{align*}
\sup\limits_n\mathbb{P}\left[\vert \vert u^n_t\vert \vert 
_p^p>N\right]\leq\dfrac{\varepsilon}{3},\quad t\in [0,T].
\end{align*}
Let $\Gamma^1_{\varepsilon,T}$ be a closed set
defined by
\begin{align}
\label{eq:Gamma1}
\Gamma^1_{\varepsilon,T}:=\{v_t\in L^p([0,L]): 
\vert \vert v_t\vert \vert _p^p\leq N,t\in[0,T]\}.
\end{align}

By Lemma \ref{lem:spatialestimation}
and Markov's inequality, it holds that for each 
$\varepsilon>0$, $p\in(\alpha,2]$ and $T>0$ 
\begin{equation*}
\lim_{\delta\rightarrow 0}\sup_n\mathbb{P}\left[ 
\sup_{\vert x_1\vert \leq \delta}\vert \vert 
u^n(t,\cdot+x_1)-u^n(t,\cdot)\vert \vert 
_p^p>\varepsilon\right]=0,\quad t\in[0,T].
\end{equation*}
Then for $k\in\mathbb{N}$ 
we can choose a sequence 
$(\delta_k)_{k\geq1}$ with
$\delta_k\rightarrow0$ as $k\rightarrow\infty$
such that
\begin{align*}
\sup\limits_n\mathbb{P}\left[\sup_{\vert x_1\vert \leq 
\delta_k}\vert \vert 
u^n(t,\cdot+x_1)-u^n(t,\cdot)\vert \vert _p^p>\frac{1}{k}
\right]\leq\dfrac{\varepsilon}{3}2^{-k},\quad t\in[0,T].
\end{align*}
Let $\Gamma^2_{\varepsilon,T}$ be a closed set
defined by
\begin{align}
\label{eq:Gamma2}
\Gamma^2_{\varepsilon,T}:=\bigcap_{k=1}^{\infty}
\left\{v_t\in L^p([0,L]): 
\sup_{\vert x_1\vert \leq \delta_k}\vert \vert 
v(t,\cdot+x_1)-v(t,\cdot)\vert \vert _p^p\leq\frac{1}{k},t\in[0,T]
\right\}.
\end{align}

We next prove that for each $\varepsilon>0$ 
and $p\in(\alpha,2]$, 
\begin{equation}
\label{eq:compactset1}
\lim_{\gamma\rightarrow \infty}\sup_n\mathbb{P}\left[\int_{(L-\frac{L}{\gamma},L]}\vert u^n(t,x)\vert ^pdx>\varepsilon\right]=0.
\end{equation}
It is easy to see that
\begin{align*}
\mathbb{E}\bigg[\int_{(L-\frac{L}{\gamma},L]}\vert u^n(t,x)\vert ^pdx\bigg]
&=\mathbb{E}\bigg[\int_0^L
\vert u^n(t,x)\vert ^p1_{(L-\frac{L}{\gamma},L]}(x)dx\bigg]
\leq C_p(D_1+D_2),
\end{align*}
 where
\begin{align*}
D_1&=\int_0^L\mathbb{E}\left[\left\vert \int_0^LG_{t}
(x,y)u_0(y)dy\right\vert ^p\right]
1_{(L-\frac{L}{\gamma},L]}(x)dx,
\\
D_2&=\int_0^L\mathbb{E}\Bigg[\Bigg\vert \int_0^{t+}
\int_0^L\int_{\mathbb{R}\setminus\{0\}}
G_{t-s}(x,y)
\varphi^n(u^n(s-,y))z
\tilde{N}(ds,dy,dz)
\Bigg\vert ^p\Bigg]1_{(L-\frac{L}{\gamma},L]}(x)dx.
\end{align*}
It is easy to prove that $D_1$ converges to 0 as
$\gamma\rightarrow\infty$ by using Young's convolution
inequality and Lebesgue's dominated convergence 
theorem. For $D_2$, it holds by the 
Burkholder-Davis-Gundy inequality,
(\ref{eq:element-inequ})-(\ref{eq:jumpestimate})
and 
(\ref{eq:glo-lin-growth}) that
for $p\in(\alpha,2]$
\begin{align*}
D_2
\leq &C_p\int_0^L\int_0^{t}
\int_0^L\int_{\mathbb{R}\setminus\{0\}}
\mathbb{E}[\vert G_{t-s}(x,y)\varphi^n(u^n(s,y))z\vert ^p]
1_{(L-\frac{L}{\gamma},L]}(x)\nu_{\alpha}(dz)dydsdx
\nonumber\\
\leq&C_{p,K,\alpha}\int_0^L\int_0^{t}
\int_0^L\mathbb{E}(1+\vert u^n(s,y)\vert )^p\vert G_{t-s}(x,y)\vert 
^p1_{(L-\frac{L}{\gamma},L]}(x)dydsdx
\nonumber\\
\leq&C_{p,K,\alpha,T}\left(L+\sup_{0\leq t\leq T}\mathbb{E}
\left[\vert \vert u^n_t\vert \vert 
_p^p\right]\right)
\int_0^L\int_0^{t}(t-s)^{-\frac{p-1}{2}}
1_{(L-\frac{L}{\gamma},L]}(x)dsdx.
\end{align*}
Since $p\leq2$,
it holds that
\begin{align*}
D_2\leq C_{p,K,\alpha,T}
\left(\int_0^L1_{(L-\frac{L}{\gamma},L]}(x)dx\right)\left(L+\sup_{0\leq t\leq T}\mathbb{E}
\left[\vert \vert u^n_t\vert \vert _p^p\right]\right).
\end{align*}
By (\ref{eq:Approximomentresult}), $D_2$ 
converges to 0 as
$\gamma\rightarrow\infty$.  
Therefore, 
(\ref{eq:compactset1}) is obtained from the 
estimates of $D_1$ and $D_2$ and Markov's 
inequality.

For any $k\in\mathbb{N}$ and $T>0$ we can choose a sequence
$(\gamma_k)_{k\geq1}$ with $\gamma_k\rightarrow\infty$ as 
$k\rightarrow\infty$
such that
\begin{align*}
\sup\limits_n\mathbb{P}\left
[\int_{(L-\frac{L}{\gamma_k},L]}\vert u^n(t,x)\vert ^pdx>\frac{1}{k}\right]
\leq\dfrac{\varepsilon}{3}2^{-k},\quad t\in[0,T].
\end{align*}
Let $\Gamma^3_{\varepsilon,T}$ be a closed set
defined by
\begin{align}
\label{eq:Gamma3}
\Gamma^3_{\varepsilon,T}:=\bigcap_{k=1}^{\infty}
\left\{v_t\in L^p([0,L]): \int_{(L-\frac{L}{\gamma_k},L]}\vert v(t,x)\vert ^pdx\leq\frac{1}{k}, t\in[0,T]
\right\}.
\end{align}

Combining (\ref{eq:Gamma1}), (\ref{eq:Gamma2})
and (\ref{eq:Gamma3}) to define
\begin{align*}
\Gamma_{\varepsilon,T}:=\Gamma^1_{\varepsilon,T}
\cap\Gamma^2_{\varepsilon,T}
\cap\Gamma^3_{\varepsilon,T},
\end{align*}
 then $\Gamma_{\varepsilon,T}$ is a closed set
in $L^p([0,L]),p\in(\alpha,2]$. For any function $f\in\Gamma_{\varepsilon,T}$
the definition of $\Gamma_{\varepsilon,T}$
implies that the conditions (a)-(c) in
Lemma \ref{lem:compactcriterion}
hold, and so $\Gamma_{\varepsilon,T}$ is a 
relatively compact set in 
$L^p([0,L]),p\in(\alpha,2]$. 
Combing the closeness and relatively
compactness, we know that 
$\Gamma_{\varepsilon,T}$ is a 
compact set in $L^p([0,L]),p\in(\alpha,2]$. 
Moreover, the definition of 
$\Gamma_{\varepsilon,T}$
implies that
\begin{align*}
\inf_n\mathbb{P}
[u^n_t\in\Gamma_{\varepsilon,T}]&\geq1-
\frac{\varepsilon}{3}\left(1+2\sum_{k=1}^{\infty}
2^{-k}\right)=1-\varepsilon,
\end{align*}
which verifies condition (i) of Lemma 
\ref{lem:tightcriterion}. Condition (ii) of 
Lemma \ref{lem:tightcriterion} is verified by
Lemma \ref{lem:temporalestimation} with $p\in(\alpha,5/3)$. 
Therefore, $(u^n)_{n\geq1}$ is tight in 
$D([0,\infty),L^p([0,L]))$ for $p\in(\alpha,5/3)$,
which completes the proof.
\qed
\end{proof}

\begin{Tproof}\textbf{~of Theorem
\ref{th:mainresult2}.}
According to Proposition \ref{prop:tightnessresult2},
there exists a $D([0,\infty),L^p([0,T]))$-valued random variable 
$u$ such that $u^n$ converges to $u$ in distribution in
the Skorohod topology.
The Skorohod Representation Theorem
yields that
there exists another filtered probability space
$(\hat{\Omega}, \hat{\mathcal{F}}, (\hat{\mathcal{F}}_t)_{t\geq0},\hat{\mathbb{P}})$ 
and on it a further subsequence 
 $(\hat{u}^n)_{n\geq1}$ and
$\hat{u}$ 
which have the same distribution as 
$(u^n)_{n\geq1}$ and $u$,  
so that
$\hat{u}^n$ almost surely converges to 
$\hat{u}$ in the Skorohod topology. 
The rest of the proofs,
including the construction
of a truncated $\alpha$-stable measure $\hat{L}_{\alpha}$ such that 
$(\hat{u},\hat{L}_{\alpha})$ is a weak solution to equation
(\ref{eq:originalequation1}), 
is same as the proof of Theorem \ref{th:mainresult} and 
we omit them.

Since $\hat{u}^n$ has the same distribution as 
$u^n$ for each $n\geq1$,
the moment estimate (\ref{eq:unformlybounded}) in Lemma \ref{lem:uniformbound} can be 
written as
$$
\sup_{n\geq1}\hat{\mathbb{E}}\left[\sup_{0\leq t\leq T}\vert \vert \hat{u}^n_t\vert \vert _p^p\right]\leq C_{p,K,\alpha,T}.
$$
Hence, by Fatou's Lemma,
$$
\hat{\mathbb{E}}\left[\sup_{0\leq t\leq T}\vert \vert \hat{u}_t\vert \vert _p^p\right]\leq\liminf_{n\rightarrow\infty}
\hat{\mathbb{E}}\left[\sup_{0\leq t\leq T}\vert \vert \hat{u}^n_t\vert \vert _p^p\right]<\infty.
$$
This yields the uniform $p$-moment estimate
(\ref{eq:momentresult2}). Similarly, we can obtain the
uniform stochastic
continuity (\ref{eq:timeregular}) by  
Lemma \ref{lem:temporalestimation}.
\qed
\end{Tproof}

\noindent \textbf{Acknowledgements}
The authors would like to thank editors and anonymous referees for careful reading of the manuscript and for giving a number of important comments and suggestions that have greatly improved the presentation of the paper.
This work is supported by the National Natural Science Foundation of China (NSFC) (Nos. 11631004, 71532001), Natural Sciences and Engineering Research Council of Canada (RGPIN-2021-04100).

\end{document}